\title[Operators induced by fuzzy relations]
{\bf Operators induced by fuzzy relations}
\author {Michal Botur}
\thanks{Author gratefully acknowledge  the support by ESF Project CZ.1.07/2.3.00/20.0051 Algebraic methods in Quantum Logic of the Masaryk University.} 
\address{Palack\' y University Olomouc, Faculty of Sciences,  17.listopadu 1192/12, Olomouc 771 46, Czech Republic}
\email{michal.botur@upol.cz}
\keywords{Galois connection, fuzzy relation, tense operators, monadic operators, residuated lattice, Pavelka's algebra}
\subjclass[2010]{Primary 06D35, Secondary 03B50}
\begin{document}

\newtheorem{defin}{Definition}
\newtheorem{prop}{Proposition}
\newtheorem{theorem}{Theorem}
\newtheorem{lemma}{Lemma}
\newtheorem{cor}{Corollary}
\newtheorem{remark}{Remark}
\newtheorem{cl}{Claim}
\newtheorem{example}{Example}
\newcommand{\zl}{\mbox{$[\hspace{-1.8pt} [$}}
\newcommand{\zr}{\mbox{$]\hspace{-2pt} ]$}}
\newcommand{\s}{\mathrm{Spec_M}\,}

\begin{abstract}
Theory of operators generated by binary fuzzy relations is highly increasing for its nature and applicability. The main goal of the paper is to present several representation theorems for operators induced by fuzzy relations (for example closure operators used in formal concept analysis, monadic operators or tense operators). Consequently we establish  algebraic models with their semantics which are usable in the non-classical logic research and in the computer science research. The obtained results are applied in the theory of Pavelka's algebras.
\end{abstract}

\maketitle

\section{Introduction}
The fuzzy logic brings theoretic background to analyse non boolean inputs. The classical mathematical logic loses its applicability in the case when there is no natural way to describe the analysed data, properties or relations using 0/1 (false/true). A key idea is to extend the two element truth scale to a richer one. Logics obtained by the extensions are called fuzzy logics. There exist several theoretic concepts introducing a logic with vagueness or uncertainty. In this paper we use the concept of a \emph{commutative bounded integral residuated lattice}
$$\mathbf A = (A;\vee,\wedge,\cdot,\rightarrow,0,1)$$
which we will call simply a \emph{residuated lattice}. Thus,
\begin{itemize}
\item[i)] $(A;\vee,\wedge,0,1)$ is a bounded lattice,
\item[ii)] $(A;\cdot,1)$ is a commutative monoid,
\item[iii)] the adjointness property holds, i.e.,
$$x\cdot y\leq z \,\, \mbox{ if, and only if, }\,\, x\leq y\rightarrow z.$$
\end{itemize}
We understand this structure as a general algebraic model of a truth scale. The elements 0 and 1 model the strict false and the strict true. Other elements represent the fuzzy truth degrees. The connective of logical conjunction is modelled by the operation $\cdot$, the implication by $\rightarrow$.

Important models are residuated lattices induced by triangular norms. By a \emph{triangular norm} we mean a binary operation $\cdot$ defined on the real interval $[0,1]$ which is commutative, associative and left-continuous (in usual sense) and monotone. Then the residuum operation is given by 
$$x\rightarrow y=\max\{a\mid a\cdot x\leq y\}.$$
Used order is the standard one.

\begin{example}
The \L ukasiewicz triangular norm is defined by $x\cdot y =\min\{1-x-y,0\}$. Its residual operation is $x\rightarrow y =\min\{1-x+y,1\}$
\end{example}

\begin{example}
The G\" odel triangular norm is defined by $x\cdot y = \min\{x,y\}$ and its residual operation is $x\rightarrow y = y$ if $y\leq x$ and $x\rightarrow y = 1$ if $x<y$.
\end{example}

\begin{example}
Product triangular norm is just the standard product of real numbers and its residual operation is $x\rightarrow y =\min \{x/y,1\}$ if $y\not = 0$ and $x\rightarrow 0 = 0$ if $x\not = 0$ and $0\rightarrow 0= 1$. 
\end{example}

It is well known that all continuous triangular norms are decomposable into \L ukasiewicz,  G\" odel and product ones (for the details we refer to \cite{Haj}). Moreover, the H\' ajek's basic logic is generated exactly by the residuated lattices induced by the continuous triangular norms  \cite{CEGT}. Altogether, triangular norms, the H\' ajek's basic logic, or its special subclasses are the most applicable classes of fuzzy logics in the computer science. 

Algebraic models of the H\' ajek's basic logic are the, so called, \emph{BL-algebras} which are just residuated lattices satisfying the divisibility law $$x\cdot (x\rightarrow y)= x\wedge y$$ and the prelinearity law $$(x\rightarrow y)\vee(y\rightarrow x)=1.$$ BL-algebras satisfying the double negation law $\neg\neg x = x$ where $$\neg x := x\rightarrow 0$$ are called MV-algebras. The class of MV-algebras is induced exactly by the \L ukasiewicz triangular norm.

Recall that MV-algebras are usually defined as algebras of type $\mathbf A=(A;\oplus,\neg,0)$ such that
\begin{itemize}
\item[(MV1)] $(A;\oplus, 0)$ is a commutative monoid,
\item[(MV2)] the double negation $\neg\neg x=x$ holds,
\item[(MV3)] the \L ukasiewicz axiom $\neg(\neg x\oplus y)\oplus y=\neg(\neg y\oplus x)\oplus x$ holds. 
\end{itemize}
However, both presented definitions are equivalent. More precisely, if $\mathbf A=(A;\vee,\wedge,\cdot,\rightarrow,0,1)$ is residuated lattice satisfying divisibility, prelinearity and double negation law then an algebra $(A;\oplus,\neg,0)$ where
$x\oplus y := \neg(\neg x\cdot \neg y)$
is an MV-algebra. Conversely, let us have an MV-algebra $\mathbf A=(A;\oplus,\neg,0)$ then an algebra $(A;\vee,\wedge,\cdot,\rightarrow,0,1)$, where $$x\vee y:= \neg(\neg x\oplus y)\oplus y,$$ $$ x\wedge y := \neg (\neg x\vee \neg y),$$ $$ x\cdot y:= \neg(\neg x\oplus \neg y),$$ $$ x\rightarrow y= \neg x\oplus y,$$ $$ 1:=\neg 0$$ is a residuated lattice satisfying divisibility, prelinearity and double negation law. We remark that the induced order can be described by a stipulation $$x\leq y\,\,\mbox{ if, and only if, }\,\, \neg x\oplus y = 1.$$

We remind that the results contained in this paper are formulated for more general structures; thus all the results are fully applicable in the theory of BL-algebras.

The following lemma \cite{Haj} describes preserving or reversing of infima and suprema.

\begin{lemma}\label{rez}
Let us have a resiuated lattice $\mathbf A=(A;\vee,\wedge,\cdot,\rightarrow,0,1)$, let $M\subseteq A$ be an arbitrary set and let $x\in A$ be an arbitrary element.
\begin{itemize}
\item[i)] If the supremum $\bigvee M$ exists then also the supremum $\bigvee \{x\cdot m\mid m\in M\}$ exists and $$x\cdot\big(\bigvee M\big) = \bigvee \{x\cdot m\mid m\in M\}.$$ 
\item[ii)] If the supremum $\bigvee M$ exists then also the infimum $\bigwedge \{m\rightarrow x\mid m\in M\}$ exists and $$\big(\bigvee M\big)\rightarrow x = \bigwedge \{m\rightarrow x\mid m\in M\}.$$ 
\item[iii)] If the infimum $\bigwedge M$ exists then also the infimum $\bigwedge \{x\rightarrow m\mid m\in M\}$ exists and $$x\rightarrow\big(\bigwedge M\big) = \bigwedge \{x\rightarrow m\mid m\in M\}.$$ 
\end{itemize}
\end{lemma}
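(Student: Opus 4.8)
The plan is to prove all three identities uniformly: in each case I would exhibit the element written on the left-hand side as the supremum (resp.\ infimum) of the set written on the right-hand side, which simultaneously establishes that this supremum (resp.\ infimum) exists. The only facts needed beyond the adjointness property are the order-theoretic monotonicities it entails, namely that $\cdot$ is isotone in each variable and that $\rightarrow$ is antitone in its first variable and isotone in its second; each of these is the standard one-line deduction from adjointness and commutativity, and I would use them without further comment.

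For (i), first note that $x \cdot \bigl(\bigvee M\bigr)$ is an upper bound of $\{x \cdot m \mid m \in M\}$, since $m \le \bigvee M$ gives $x \cdot m \le x \cdot \bigl(\bigvee M\bigr)$. To see it is the least such, let $z$ satisfy $x \cdot m \le z$ for every $m \in M$; by adjointness this reads $m \le x \rightarrow z$ for every $m \in M$, whence $\bigvee M \le x \rightarrow z$, and a second application of adjointness yields $x \cdot \bigl(\bigvee M\bigr) \le z$. Thus the supremum on the right-hand side exists and coincides with the left-hand side.

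Parts (ii) and (iii) go the same way. In (ii), antitonicity of $\rightarrow$ in the first argument makes $\bigl(\bigvee M\bigr) \rightarrow x$ a lower bound of $\{m \rightarrow x \mid m \in M\}$; if $z \le m \rightarrow x$ for all $m \in M$, then adjointness together with commutativity gives $m \cdot z \le x$, i.e.\ $m \le z \rightarrow x$, for all $m \in M$, so $\bigvee M \le z \rightarrow x$, hence $z \cdot \bigl(\bigvee M\bigr) \le x$ and finally $z \le \bigl(\bigvee M\bigr) \rightarrow x$. In (iii), isotonicity of $\rightarrow$ in the second argument makes $x \rightarrow \bigl(\bigwedge M\bigr)$ a lower bound of $\{x \rightarrow m \mid m \in M\}$; and if $z \le x \rightarrow m$ for all $m \in M$, then $z \cdot x \le m$ for all $m$, so $z \cdot x \le \bigwedge M$, hence $z \le x \rightarrow \bigl(\bigwedge M\bigr)$.

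There is no real obstacle here: the argument is a textbook exploitation of the Galois-connection character of the pair $(x\cdot{-},\,x\rightarrow{-})$. The only points requiring a moment's attention are bookkeeping ones — each part must genuinely verify both halves of an extremum claim (an upper or lower bound, together with minimality or maximality among such bounds) in order to conclude existence of the supremum or infimum, and in part (ii) one must remember to commute $m \cdot z$ into $z \cdot m$ before re-applying adjointness.
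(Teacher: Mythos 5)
Your proof is correct and complete: each part verifies both the bound property and the extremality property, so existence of the supremum/infimum comes for free, and the adjointness steps (with the commutativity remark in (ii)) are all sound. The paper itself gives no proof of this lemma — it simply cites it from H\'ajek's book — and your argument is exactly the standard adjunction (Galois-connection) proof that the cited source uses, so there is nothing to add or correct.
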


\section{Galois connections}

Let us have ordered sets $(A;\leq)$ and $(B;\leq)$. Then a couple of monotone mappings 
$$\bfig
\morphism(0,0)|a|/{@{>}@/^1em/}/<500,0>[A`B;f]
\morphism(0,0)|b|/{@{<-}@/_1em/}/<500,0>[A`B;g]
\efig$$
forms a \emph{Galois connection} if they satisfy the adjointness property
$$x\leq f(y)\,\, \mbox{ if, and only if, }\,\, g(x)\leq y\eqno{(1)}$$
for all $y\in A$ and $x\in B$. A couple of antitone mappings
$$\bfig
\morphism(0,0)|a|/{@{>}@/^1em/}/<500,0>[A`B;d]
\morphism(0,0)|b|/{@{<-}@/_1em/}/<500,0>[A`B;h]
\efig$$
forms a \emph{reversed (or contravariant) Galois connection} if it satisfies
$$x\leq d(y) \mbox{ if and only if } y \leq h(x)$$
for all $y\in A$ and all $x\in B$. In this paper we denote the composition of mappings $fg\colon A\longrightarrow C$ for given mappings $f\colon A\longrightarrow B$ and $g\colon B\longrightarrow C$. We recall the well known lemma characterizing Galois connections.

\begin{lemma}\label{ll1}
Let us have ordered sets $(A;\leq)$ and $(B;\leq)$ and monotone mappings $f\colon A\longrightarrow B$, $g\colon B\longrightarrow A$. The following properties are equivalent:
\begin{itemize}
\item[i)] $f$ and $g$ form a Galois connection,
\item[ii)] $gf(y)\leq y$ and $x\leq fg(x)$ hold for any $y\in A$ and $x\in B.$
\end{itemize}
\end{lemma}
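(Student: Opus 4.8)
The plan is to prove the two implications of Lemma~\ref{ll1} separately, exploiting the fact that the monotonicity of $f$ and $g$ is assumed throughout, so only the adjointness-type conditions need to be matched up.

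First I would show (i)$\Rightarrow$(ii). Assume $f$ and $g$ form a Galois connection, i.e. $x\leq f(y)$ iff $g(x)\leq y$ for all $y\in A$, $x\in B$. To get $gf(y)\leq y$, I would instantiate the equivalence at $x:=f(y)$: the left-hand side $f(y)\leq f(y)$ holds trivially by reflexivity, so the right-hand side $g(f(y))\leq y$ follows. Symmetrically, to get $x\leq fg(x)$, I would instantiate at $y:=g(x)$: now the right-hand side $g(x)\leq g(x)$ holds trivially, so the left-hand side $x\leq f(g(x))$ follows. This direction is essentially immediate and uses neither monotonicity nor transitivity.

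Next I would show (ii)$\Rightarrow$(i). Assume $gf(y)\leq y$ and $x\leq fg(x)$ for all $y\in A$, $x\in B$. I must establish the equivalence $x\leq f(y)\iff g(x)\leq y$. For the forward direction, suppose $x\leq f(y)$; applying the monotone map $g$ gives $g(x)\leq gf(y)$, and combining with $gf(y)\leq y$ via transitivity yields $g(x)\leq y$. For the converse, suppose $g(x)\leq y$; applying the monotone map $f$ gives $fg(x)\leq f(y)$, and combining with $x\leq fg(x)$ via transitivity yields $x\leq f(y)$. Here both the monotonicity hypotheses and transitivity of the orders are genuinely used.

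I do not anticipate a serious obstacle: the whole argument is a standard bookkeeping exercise in the definition of adjoint pairs, and the only subtlety worth flagging is that one must be careful about the direction in which each mapping is monotone and about which inequality is plugged in for which variable — a sign error there would reverse an inequality. I would simply present the four short deductions above in sequence, noting the trivial instantiations in the first part and the monotonicity-plus-transitivity pattern in the second, and conclude that (i) and (ii) are equivalent.
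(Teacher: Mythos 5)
Your proposal is correct and follows essentially the same route as the paper: the (i)$\Rightarrow$(ii) direction via instantiating the adjointness equivalence at $x:=f(y)$ and $y:=g(x)$, and the (ii)$\Rightarrow$(i) direction via monotonicity plus transitivity through $gf(y)\leq y$ and $x\leq fg(x)$. No gaps to report.
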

\begin{proof}
{\it i) $\Rightarrow$ ii).} If $f$ and $g$ form a Galois connection then from $f(y)\leq f(y)$ and (1) we have $gf(y)\leq y$ for all $y\in A$. Analogously, $g(x)\leq g(x)$ gives $x\leq fg(x)$ for all $x\in B$.

{\it ii) $\Rightarrow$ i).} If $x\leq f(y)$ fore some $y\in A$ and $x\in B$ then, using monotonicity and assumed inequalities, we obtain $g(x)\leq gf(y)\leq y$. Conversely, $g(x)\leq y$ yields $x\leq fg(x)\leq f(y)$.
\end{proof}

We remark that a \emph{closure operator} on an ordered set $(A;\leq)$ is a monotone mapping $C\colon A\longrightarrow A$ satisfying $x\leq C(x)$ and $CC(x)=C(x)$ for all $x\in A$. Dually, an \emph{interior operator} on an ordered set $(A;\leq)$ is  a monotone mapping $I\colon A\longrightarrow A$ satisfying $I(x)\leq x$ and $II(x)=I(x)$ for all $x\in A$.

\begin{cor}\label{c1}
If the mappings $f$ and $g$ form a Galois connection between ordered sets $(A;\leq)$ and $(B;\leq)$ then $gf\colon A\longrightarrow A$ is a interior operator on $(A;\leq)$ and interiors are just elements $g(x)$ for any $x\in B$.

The operator $fg\colon B\longrightarrow B$ is a closure operator and closed elements are just elements $f(y)$ for any $y\in A$.      
\end{cor}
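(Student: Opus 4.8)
The plan is to verify the three defining properties of an interior operator for the composite $gf$, then to identify its fixed points with the image of $g$, and finally to obtain the assertions about $fg$ by order duality.

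First, $gf$ is monotone, being a composite of the monotone maps $f$ and $g$, and the inequality $gf(y)\le y$ for every $y\in A$ is precisely the first half of Lemma~\ref{ll1}(ii). For idempotency I would argue in two steps: applying $gf(z)\le z$ to $z=gf(y)$ gives $gf(gf(y))\le gf(y)$, while applying the monotone map $g$ to the inequality $f(y)\le fg(f(y))$ (an instance of the second half of Lemma~\ref{ll1}(ii)) gives the reverse inequality $gf(y)\le gf(gf(y))$. Hence $gf(gf(y))=gf(y)$ for all $y$, so $gf$ is an interior operator on $(A;\le)$.

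Next I would show that the fixed points of $gf$, which for an idempotent map are exactly the elements of its image, coincide with the set $\{g(x)\mid x\in B\}$. If $a=gf(a)$ then $a=g(f(a))$ is of the required form. Conversely, given $x\in B$, put $a=g(x)$; then $gf(a)\le a$ because $gf$ is an interior operator, whereas from $x\le fg(x)$ and monotonicity of $g$ we get $a=g(x)\le g(fg(x))=gf(a)$. Thus $gf(a)=a$, so $a$ is a fixed point, and the two descriptions of the interior elements agree.

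Finally, the statements about $fg$ follow by passing to the reversed orders: if $f$ and $g$ form a Galois connection between $(A;\le)$ and $(B;\le)$, then (by the adjointness property read backwards) $g$ and $f$ form a Galois connection between $(B;\ge)$ and $(A;\ge)$, under which $fg$ plays the role that $gf$ played above; hence $fg$ is an interior operator on $(B;\ge)$, that is, a closure operator on $(B;\le)$, whose fixed points are the elements $f(y)$ with $y\in A$. I expect no genuine obstacle here; the only things needing care are bookkeeping the directions of the inequalities when $f$ or $g$ is applied inside the idempotency and fixed-point arguments, and checking that the reversed pair is indeed a Galois connection between the stated ordered sets.
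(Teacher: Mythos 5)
Your argument is correct and follows essentially the same route as the paper: both rest on the inequalities $gf(y)\leq y$ and $x\leq fg(x)$ from Lemma~\ref{ll1} together with monotonicity, yielding idempotency and the identification of the fixed points with the image of the adjoint map. The only cosmetic difference is that you treat $fg$ by an explicit order-duality argument, whereas the paper establishes $fgf(y)=f(y)$ once and reads off both $gfgf=gf$ and $fgfg=fg$ from it, handling the closure half ``analogously''.
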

\begin{proof}
In Lemma \ref{ll1} we have proved the inequality $gf(y)\leq y$ for all $y\in A$. We can also deduce $fgf (y)\leq f(y)$. Lemma \ref{ll1} further stated an inequality $x\leq fg (x)$ and thus $f(y)\leq fgf(y)$ holds. Together $fgf(y)=f(y)$ holds which prove that $gfgf(y)=gf(y)$ and $fgfg(y)=fg(y)$. Hence, $gf$ is an interior operator and analogously $fg$ is an interior operator. Moreover, it is proved that $f(y)$ are interiors. Conversely, if $y\in A$ is an interior then $gf(y)= y$ and if we denote $f(y)=x\in B$ then $y=g(x)$. Hence interiors are just elements $g(x)$ where $x\in B$. The rest of the proof is analogous.
\end{proof}

Because reversed Galois connections between ordered sets $(A;\leq)$ and $(B;\leq)$ are just Galois connections between $(A;\leq)$ and $(B;\leq^{-1})$, where $\leq^{-1}$ denotes the inverse order, we can immediately state the following lemma and corollary.

\begin{lemma}\label{ll2}
Let us have ordered sets $(A;\leq)$ and $(B;\leq)$ and antitone mappings $d\colon A\longrightarrow B$, $h\colon B\longrightarrow A$. The following properties are equivalent:
\begin{itemize}
\item[i)] $d$ and $h$ form a reversed Galois connection,
\item[ii)] $y\leq hd(y)$ and $x\leq dh(x)$ hold for all $y\in A$ and $x\in B.$
\end{itemize}
\end{lemma}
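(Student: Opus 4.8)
The plan is to derive Lemma~\ref{ll2} directly from Lemma~\ref{ll1} by reversing the order on $A$, exactly as anticipated in the remark just above the statement; no genuinely new argument is needed. First I would note that a map $d\colon A\longrightarrow B$ is antitone between $(A;\leq)$ and $(B;\leq)$ precisely when it is monotone regarded as a map $(A;\leq^{-1})\longrightarrow(B;\leq)$, and likewise $h$ is antitone between $(B;\leq)$ and $(A;\leq)$ precisely when it is monotone regarded as a map $(B;\leq)\longrightarrow(A;\leq^{-1})$. Hence the pair $(d,h)$ in the statement is, verbatim, a pair of monotone maps between the ordered sets $(A;\leq^{-1})$ and $(B;\leq)$.

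Next I would observe that the defining condition of a reversed Galois connection, namely $x\leq d(y)$ if and only if $y\leq h(x)$, is nothing but the adjointness condition~(1) for this monotone pair, read in $(A;\leq^{-1})$ and $(B;\leq)$: the left-hand inequality is taken in $(B;\leq)$ as it stands, while the right-hand inequality $y\leq h(x)$ in $(A;\leq)$ is the same as $h(x)\leq^{-1}y$ in $(A;\leq^{-1})$, which is precisely the ``$g(x)\leq y$'' clause of~(1). Consequently, $(d,h)$ is a reversed Galois connection between $(A;\leq)$ and $(B;\leq)$ if and only if it is a (covariant) Galois connection between $(A;\leq^{-1})$ and $(B;\leq)$.

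It then remains only to apply Lemma~\ref{ll1} to this covariant pair. By part~(ii) of that lemma, $(d,h)$ is such a Galois connection if and only if $hd(y)\leq^{-1}y$ holds for all $y\in A$ and $x\leq dh(x)$ holds for all $x\in B$. Rewriting the first inequality in the original order of $A$ turns $hd(y)\leq^{-1}y$ into $y\leq hd(y)$, whereas the second inequality already lives in $(B;\leq)$ and is left untouched; together these are exactly the two inequalities in~(ii) of the present lemma, and the equivalence of~(i) and~(ii) follows. I do not expect a real obstacle here: the only thing demanding care is bookkeeping — making sure it is the order on $A$, and not the order on $B$, that is inverted (so that the two composite inequalities come out with the directions stated), and reconciling the composites $hd$ and $dh$ with the composition convention fixed in the introduction.
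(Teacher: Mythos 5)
Your proof is correct and is essentially the paper's own argument: the paper gives no separate proof of Lemma~\ref{ll2}, deriving it immediately from Lemma~\ref{ll1} by viewing a reversed Galois connection as an ordinary one with one of the two orders inverted. Your careful bookkeeping (inverting the order on $A$ rather than on $B$, which is what makes the inequalities come out exactly as stated) is precisely the routine verification the paper leaves implicit.
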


\begin{cor}\label{c2}
If the mappings $d$ and $h$ form a Galois connection between ordered sets $(A;\leq)$ and $(B;\leq)$ then $hd\colon A\longrightarrow A$ is a closure operator on $(A;\leq)$ and closed elements are just elements $h(x)$ for any $x\in B$.

The operator $hd\colon B\longrightarrow B$ is a closure operator and closed elements are just elements $d(y)$ for any $y\in A$.   
\end{cor}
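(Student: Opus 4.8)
The plan is to obtain the statement from Corollary~\ref{c1} by reversing one of the two orders. Writing $A^{\mathrm{op}}$ for the set $A$ carrying the inverse order $\leq^{-1}$, I would first check that $d\colon A^{\mathrm{op}}\to B$ and $h\colon B\to A^{\mathrm{op}}$ are monotone (which is just antitonicity of $d,h$ with respect to $\leq$), and that the reversed adjointness $x\leq d(y)$ if and only if $y\leq h(x)$ is precisely condition~(1) for this pair once the order on $A$ is turned upside down; hence $d,h$ form an ordinary Galois connection between $A^{\mathrm{op}}$ and $B$. Corollary~\ref{c1} then yields at once that $hd\colon A\to A$ is an interior operator on $A^{\mathrm{op}}$, i.e.\ a closure operator on $(A;\leq)$, whose closed elements are exactly the elements $h(x)$ with $x\in B$, and dually that $dh\colon B\to B$ is a closure operator on $(B;\leq)$ whose closed elements are exactly the elements $d(y)$ with $y\in A$. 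The only thing requiring care here is to reverse the correct order and to match $d$ and $h$ with the two legs of~(1) the right way round.

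If a self-contained argument is preferred, I would run it in strict parallel with the proof of Corollary~\ref{c1}, with Lemma~\ref{ll2} replacing Lemma~\ref{ll1}. First, $hd$ and $dh$ are monotone as composites of two antitone maps, so only extensivity and idempotency remain. Extensivity is immediate from Lemma~\ref{ll2}: $y\leq hd(y)$ for all $y\in A$ and $x\leq dh(x)$ for all $x\in B$. For idempotency of $hd$ I would apply the antitone map $d$ to $y\leq hd(y)$ to get $dhd(y)\leq d(y)$, then apply $h$ to obtain $hd(y)\leq hdhd(y)$; instantiating $x\leq dh(x)$ at $x=d(y)$ gives $d(y)\leq dhd(y)$, and applying $h$ gives $hdhd(y)\leq hd(y)$, whence $hdhd(y)=hd(y)$. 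The computation for $dh$ is the mirror image with the roles of $A,B$ and of $d,h$ interchanged. This shows $hd$ is a closure operator on $(A;\leq)$ and $dh$ one on $(B;\leq)$.

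It remains to describe the closed elements, again as in Corollary~\ref{c1}: applying $h$ to $x\leq dh(x)$ gives $hdh(x)\leq h(x)$, while extensivity at $h(x)$ gives $h(x)\leq hd(h(x))=hdh(x)$, so $hd(h(x))=h(x)$ and every element in the image of $h$ is $hd$-closed; conversely, a closed $y$ satisfies $y=hd(y)=h(d(y))$ and so lies in the image of $h$. Thus the $hd$-closed elements are exactly the $h(x)$, $x\in B$, and symmetrically the $dh$-closed elements are exactly the $d(y)$, $y\in A$. I expect the main (indeed only) obstacle to be purely notational: each application of the antitone maps $d$ or $h$ reverses $\leq$, so one must track the direction of every inequality carefully; once that is done the argument is routine and structurally identical to the proof of Corollary~\ref{c1}.
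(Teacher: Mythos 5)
Your proposal is correct and follows essentially the paper's own route: Corollary \ref{c2} is stated there as an immediate consequence of the remark that a reversed Galois connection is an ordinary Galois connection once one of the two orders is inverted, which is exactly your first argument (you invert the order on $A$ rather than on $B$, an immaterial difference), reducing everything to Corollary \ref{c1}. Your alternative self-contained verification via Lemma \ref{ll2} is also sound, and you rightly read the hypothesis as a reversed Galois connection and the second operator as $dh\colon B\longrightarrow B$, silently correcting the obvious typos in the paper's formulation.
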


\begin{defin}
We say that a mapping $f\colon A\longrightarrow B$, where $(A;\leq)$ and $(B;\leq)$ are ordered sets, is \emph{infima preserving} (or \emph{suprema preserving}) if, for any $M\subseteq A$ such that $\bigwedge M$ exists, also $\bigwedge f(M)$ exists and an equality $f(\bigwedge M)=\bigwedge f(M)$ holds (if $\bigvee M$ exists then also $\bigvee f(M)$ exists and an equality $f(\bigvee M)=\bigvee f(M)$ holds).

We say that a mapping $f\colon A\longrightarrow B$ is \emph{infima reversing} (or \emph{suprema reversing}) if, for any $M\subseteq A$ such that $\bigwedge M$ exists, also $\bigvee f(M)$ exists and an equality $f(\bigwedge M)=\bigvee f(M)$ holds (if $\bigvee M$ exists then also $\bigwedge f(M)$ exists and an equality $f(\bigvee M)=\bigwedge f(M)$ holds).
\end{defin} 

The following well known theorem states necessary and sufficient conditions for an existence of an opposite mapping forming a Galois connection if the first one is given and it shows a way how to derive the second (opposite) mapping from the given one.

\begin{theorem}\label{rep}
Let us have two complete lattices $(A;\vee,\wedge,0,1)$ and $(B;\vee,\wedge,0,1)$.
\begin{itemize}
\item[i)] If $f\colon A\longrightarrow B$ is monotone function then there exists $g\colon B\longrightarrow A$ such that $f$ and $g$ form a Galois connection if, and only if, $f$ is an infima preserving;  then $g(x)=\bigwedge \{a\in A\mid x \leq f(a) \}.$ 
\item[ii)] If $g\colon B\longrightarrow A$ is monotone function then there exists $f\colon A\longrightarrow B$ such that $f$ and $g$ forms a Galois connection if, and only if, $g$ is a suprema preserving; then $f(x)=\bigvee \{b\in B\mid g(b) \leq x \}$. 
\item[iii)] If $d\colon A\longrightarrow B$ is monotone function then there exists $h\colon B\longrightarrow A$ such that $d$ and $h$ forms reversing Galois connection if, and only if, $d$ is a suprema reversing; then $h(x)=\bigvee \{a\in A\mid x \leq d(a) \}$.
\end{itemize}
\end{theorem}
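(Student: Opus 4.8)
The plan is to prove each of the three items by first checking that the stated necessary condition is genuinely necessary, then showing it is sufficient by exhibiting the partner map via the given formula. For item i), the forward implication is immediate: if $f$ and $g$ form a Galois connection, then $f$ is a right adjoint, and right adjoints preserve arbitrary existing infima. Concretely, given $M\subseteq A$ with $m_0=\bigwedge M$, monotonicity gives $f(m_0)\leq f(m)$ for all $m\in M$, so $f(m_0)$ is a lower bound of $f(M)$; and if $x\leq f(m)$ for every $m\in M$, then by $(1)$ we get $g(x)\leq m$ for every $m$, hence $g(x)\leq m_0$, hence $x\leq f(m_0)$ by $(1)$ again. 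So $f(m_0)=\bigwedge f(M)$, which is exactly infima preservation.

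For the converse in item i), assume $f$ is infima preserving and define $g(x)=\bigwedge\{a\in A\mid x\leq f(a)\}$; this infimum exists since $A$ is complete (the defining set is nonempty because $1\in A$ satisfies $x\leq 1=f(1)$ provided $f$ preserves the empty infimum — note $\bigwedge\emptyset=1$, so infima preservation forces $f(1)=1$). First I would check $g$ is monotone, which is clear since enlarging $x$ shrinks the defining set. Next, the key point: using infima preservation on the set $S_x=\{a\mid x\leq f(a)\}$, we get $f(g(x))=f(\bigwedge S_x)=\bigwedge f(S_x)\geq x$, because $x$ is a lower bound of $f(S_x)$ by definition of $S_x$. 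Thus $x\leq f(g(x))$ for all $x\in B$. Conversely $g(f(y))=\bigwedge\{a\mid f(y)\leq f(a)\}\leq y$ since $y$ itself lies in that set. By Lemma \ref{ll1}, $f$ and $g$ form a Galois connection. Finally, uniqueness of $g$ and the specific formula: any adjoint $g'$ satisfies $g'(x)\leq a\iff x\leq f(a)$, so $g'(x)$ is the least $a$ with $x\leq f(a)$, which by monotonicity of $f$ and completeness equals $\bigwedge S_x=g(x)$.

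Item ii) is the order-dual of item i): apply i) with the roles of $A$ and $B$ swapped and both orders reversed (so "infima preserving" becomes "suprema preserving", and a Galois connection in the reversed orders with $g$ in the role of the monotone map produces $f$ in the role of the adjoint). One just has to track that the adjointness condition $(1)$ is symmetric in the required way — $x\leq f(y)\iff g(x)\leq y$ read in the dual lattices becomes $g(b)\geq^{-1} x \iff b\geq^{-1} ?$, so I would state it cleanly by noting that $(f,g)$ is a Galois connection between $(A,\leq)$ and $(B,\leq)$ iff $(g,f)$ is a Galois connection between $(B,\leq^{-1})$ and $(A,\leq^{-1})$, then invoke i). Item iii) then follows from i) (or ii)) via the remark already made in the text that a reversed Galois connection between $(A,\leq)$ and $(B,\leq)$ is the same as a Galois connection between $(A,\leq)$ and $(B,\leq^{-1})$: a suprema-reversing $d\colon A\to B$ becomes a suprema-preserving map into $(B,\leq^{-1})$... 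I would actually route iii) through item ii) applied with $d$ as the monotone map into the reversed lattice, yielding the partner $h(x)=\bigvee\{a\in A\mid x\leq d(a)\}$ exactly as claimed (the join being computed in the original order $\leq$ on $A$).

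The routine parts are the monotonicity checks and the bookkeeping of which infimum/supremum is computed in which order; the one genuinely substantive step, and the place to be careful, is the identity $f(g(x))\geq x$ in item i) — it is precisely here that infima preservation (rather than mere monotonicity) is used, and it is the heart of why the stated formula for the adjoint works. The dualizations in ii) and iii) contain no new mathematical content but demand attention so that the inverted orders are applied consistently to the lattice structure, the monotonicity/antitonicity of the maps, and the form of the adjointness biconditional.
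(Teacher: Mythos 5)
The paper offers no proof of Theorem \ref{rep}: it is stated as a well-known fact, so there is no argument of the author's to compare against. Your proof is correct and is the standard one, and it fits the paper's own toolkit: the necessity direction in i) by testing the adjunction $x\leq f(m)\Leftrightarrow g(x)\leq m$ against lower bounds of $M$, the sufficiency by defining $g(x)=\bigwedge\{a\mid x\leq f(a)\}$ and verifying $gf(y)\leq y$, $x\leq fg(x)$ so that Lemma \ref{ll1} applies (and you correctly isolate the one place where infima preservation, not mere monotonicity, is needed, namely $fg(x)\geq x$), plus the uniqueness argument identifying $g'(x)$ as the least element of $\{a\mid x\leq f(a)\}$. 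Items ii) and iii) by dualization are fine, with one caution you should make explicit when writing it out: because the paper's adjointness convention $x\leq f(y)\Leftrightarrow g(x)\leq y$ is asymmetric, the paper's reversed Galois connection $x\leq d(y)\Leftrightarrow y\leq h(x)$ is literally a Galois connection between $(A;\leq^{-1})$ and $(B;\leq)$ with $d$ in the $f$-role (equivalently between $(B;\leq^{-1})$ and $(A;\leq)$ with $d$ in the $g$-role), not between $(A;\leq)$ and $(B;\leq^{-1})$ as the paper's remark, read verbatim, would suggest -- the latter reading produces the interior-type correspondence $d(y)\leq x\Leftrightarrow h(x)\leq y$ instead. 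Your routing of iii) through item ii), with $d$ viewed as a suprema-preserving monotone map once the order on $B$ is reversed and the two lattices are taken in the order $(B;\leq^{-1})$, $(A;\leq)$, does yield exactly $h(x)=\bigvee\{a\in A\mid x\leq d(a)\}$ with the join in the original order on $A$, so the conclusion stands; just carry out the biconditional check you yourself flag as the delicate bookkeeping step.
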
   

Direct corollary of the previous theorem is uniqueness of the opposite mappings forming (reversing) Galois connection with given one. We are going to show important examples of Galois connections used in substructural logic and computer science.

\subsection{Universal and Existential Quantifiers} Universal or existential quantifiers are modelled by the, so called, monadic operators. Monadic operator $\exists$ on a Boolean algebra $\mathbf B$ can be defined as a closure operator satisfying $\exists \neg \exists x =\neg \exists x$ (see \cite{Hal}). Dual operator $\forall$ is derived as $\forall x= \neg \exists \neg x$ and it is an interior operator satisfying $\forall\neg\forall x= \neg\forall x$. It can be easily verified that inequality $$\exists\forall x\leq x\leq \forall\exists x$$ holds. Using Lemma \ref{ll1} we can prove that the operators $\forall$ and $\exists$ form Galois connection on $\mathbf B$ (it means Galois connection between $\mathbf B$ and $\mathbf B$). Moreover, a monadic operators on a boolean algebra $\mathbf B$ is just an interior operator $\forall\colon B\longrightarrow B$ which forms a Galois connection with $\neg \forall\neg$ on $\mathbf B$.

Theory of monadic operators was generalized for MV-algebras \cite{DiNG, Geo1}. If $\mathbf A = (A;\oplus,\neg,0)$ is an MV-algebra then a monadic operator $\exists \colon A\longrightarrow A$ was defined by the inequalities
\begin{itemize}
\item[($\exists$1)] $x\leq \exists x$,
\item[($\exists$2)] $\exists (x\vee y)=\exists x \vee \exists y$,
\item[($\exists$3)] $\exists \neg\exists x= \neg \exists x$,
\item[($\exists$4)] $\exists (\exists x\oplus \exists y)=\exists x\oplus \exists y$,
\item[($\exists$5)] $\exists (x\oplus x)=\exists x\oplus \exists x$
\item[($\exists$6)] $\exists (x\cdot x)= \exists x \oplus \exists x$.
\end{itemize}
This definition still guarantee that any monadic operator $\exists$ with $\neg\exists\neg$ form a Galois connection. In this paper we will try to show (inter alia) that this definition is stronger that it could be. 

There exist several generalizations of monadic operators for another structures \cite{RaSa, RaSa1} but monadic operators on structures with no double negation law lost natural Galois connections and obtained results are weaker. 

\subsection{Tense Operators} To obtain the, so-called, {\it tense logic} from the classical logic the propositional calculus is enriched by new unary operators $G$ and $H$ (and new derived operators $F:=\neg G\neg$ and $P:=\neg H \neg,$ where $\neg$ denotes the classical negation) which are called {\it tense operators}. The operator $G$ usually express the quantifier `it will still be the case that' and $H$ express `it has always been the case that'. Hence, $F$ and $P$ are in fact tense existential quantifiers.

The couple $(T,\rho)$ where $T$ is a non-void set and $\rho$ is a binary relation on $T$  is called a {\it time frame}. For a given logical formula $\phi$ of our propositional logic and for $t\in T$ we say that $G(\phi(t))$ is valid if $\phi(s)$ is valid for any $s\in T$ with $t\rho s.$ Analogously, $H(\phi (t))$ is valid if $\phi(s)$ is valid for any $s\in T$ with $s\rho t.$ Thus $F(\phi(t))$ is valid if there exists $s\in T$ such that $t\rho s$ and $\phi (s)$ is valid and analogously $P(\phi (t))$ is valid if there exists $s\in T$ such that $s \rho t$ and $\phi(s)$ is valid.

Study of tense operators has originated in 1980's \cite{2}. Recall that for a classical propositional calculus represented by the means of  Boolean algebra $\mathbf B=(B;\vee,\wedge,\neg,0,1)$ tense operators were axiomatized \cite{2} by the following axioms:
\begin{itemize}
\item[(B1)] $G(1)=1,$ $H(1)=1,$
\item[(B2)] $G(x\wedge y)= G(x)\wedge G(y),$ $H(x\wedge y)=H(x)\wedge H(y),$
\item[(B3)] $\neg G\neg H (x)\leq x,$ and $\neg H\neg G (x)\leq x.$
\end{itemize}

Lemma \ref{ll1} and Axiom (B3) prove that operators $H$ and $\neg G \neg$ form a Galois connection on $\mathbf B$. Well known representation theorem states that every Boolean algebra with tense operators $G$ and $H$ can be embedded into a Boolean algebra $2^T$ where the operators $G$ and $H$ are induced by a binary relation $\rho\subseteq T^2$ by the stipulations
$$G(x)(i)=\bigwedge_{i\rho j} x(j)\mbox{ and } H(x)(i)=\bigwedge_{j\rho i} x(j)\eqno{(1)}$$ for all $i\in T$.
The idea of tense operators was used for more general constructions in MV-algebras. Tense MV-algebras were introduced by D. Diagonescu and G.Georgescu in \cite{7} as following 

If $\mathbf A=(A;\oplus,\neg,0)$ is an MV-algebra then $(\mathbf A,G,H)$ is a \emph{tense MV-algebra} and $G$ and $H$ are \emph{tense operators} if $G$ and $H$ are unary operators on $A$ satisfying:
\begin{itemize}
\item[(T1)] $G(1)=H(1)=1,$
\item[(T2)] $G(x)\cdot G(y)\leq G (x\cdot y),$ $H(x)\cdot H(y)\leq H(x\cdot y),$
\item[(T3)] $G(x)\oplus G(y)\leq G (x\oplus y),$ $H(x)\oplus H(y)\leq H(x\oplus y),$
\item[(T4)] $G(x)\cdot G(x)=G(x\cdot x),$ $H(x)\cdot H(x)=H(x\cdot x)$,
\item[(T5)] $G(x)\oplus G(x)=G(x\oplus x),$ $H(x)\oplus H(x)=H(x\oplus x)$,
\item[(T6)] $\neg G\neg H (x)\leq x,$ $\neg H\neg G (x)\leq x.$ 
\end{itemize}
The representation theorem for the semisimple tense MV-algebras was proved by the author and J. Paseka \cite{BoPa}. Thus any tense MV-algebra defined on a semisimple MV-agebra is embeddable into a tense MV-algebra defined on $[0,1]^T$ where the operators $G$ and $H$ possesses a binary relation $\rho\subseteq T^2$ such that (1) hold. Moreover, $\forall$ is a monadic operator if, and only if, $\forall = G= H$ are tense operators induced by a relation equivalence.

Thus both monadic and tense operators are just special cases of Galois connections induced by a (boolean) binary relation. These operators work  well in the \L ukasiewicz logic (MV-algebras) thanks to the double negation law. Definition of monadic (resp. tense) MV-algebras guarantees that a monadic (resp. tense) MV-algebra restricted to its boolean elements is a monadic (resp. tense) boolean sublagebra. This fact is an implicit consequence of the inducibility of these operators by {\bf boolean} binary relations. We are going to show that a generalization of the above presented constructions for fuzzy relations brings a richer theory with out any lost of naturality.  

The theory of tense operators on algebraic models of logics was recently studied for example, for the intuitionistic logic (corresponding to Heyting algebras) in \cite{3}, and algebras of logic of quantum mechanics  \cite{4,5}, the so called basic algebras \cite{1}, and other interesting algebras \cite{8,9,10}.

\subsection{Formal Concept analysis} Formal concept analysis has interesting applications in several fields of computer science (for example data mining, machine learning, artificial intelligence etc.). The notion was introduced by R.~Wille in 1984 and the basic idea comes from philosophy and linguistic theory. By a \emph{formal context} we mean a triple $(G,M,I)$ where $G$ is a set representing a set of objects, $M$ is a set of attributes and $I\subseteq G\times M$ is a binary relation representing which objects possess which attributes.

Any formal context induce a pair of operators $d\colon \mathbf 2^G\longrightarrow \mathbf 2^M$
 and $h\colon \mathbf{2}^M\longrightarrow \mathbf 2^G$ (where $\mathbf 2^M$ and $\mathbf 2^G$ denote the power sets of $M$ and $G$) defined by
 $$d(X)=\{m\in M\mid xIm \mbox{ for all }x\in X\}\mbox{ for all }X\subseteq M,$$
 $$h(Y)=\{g\in G\mid gIy \mbox{ for all }y\in Y\}\mbox{ for all }Y\subseteq G.$$
Operators $d$ and $h$ form a reversing Galois connection between $\mathbf 2^M$ and $\mathbf 2^G$. For the details we refer readers to \cite{Wil}. Thus $dh$ and $hd$ are closure operators on $\mathbf 2^M$ and $\mathbf 2^G$. Concepts are couples $(X,Y)$ where $X\subseteq G$, $Y\subseteq H$,  $d(X)=Y$ and $h(Y)=X$. According to Lemma \ref{ll2}, concepts are just pairs of closed sets in the form $(dh(X),h(X))$ where $X\subseteq G$.

Generalization of formal concept analysis for fuzzy logics was introduced by R. B\v elohl\' avek \cite{Bel1, Bel}. For a complete residuated lattice $\mathbf A = (A;\vee,\wedge,\cdot,\rightarrow,0,1)$, a \emph{fuzzy formal context} is a triple $(G,M,I)$ where (in contrast with the  original formal context) $I$ is a fuzzy relation, i.e., a mapping $I\colon G\times M\longrightarrow A$. 

Analogously to the boolean case, any fuzzy formal context induce the operators $d\colon \mathbf A^G\longrightarrow \mathbf A^M$ and $h\colon \mathbf{A}^M\longrightarrow \mathbf A^G$ defined by
$$d(x)(j)=\bigwedge_{i\in M}(x(i)\rightarrow I(i,j))\mbox{ for all }j\in G,$$
$$h(x)(j)=\bigwedge_{i\in G}(x(i)\rightarrow I(j,i))\mbox{ for all }j\in M.$$
Operators $d$ and $h$ forms reversing Galois connection between $\mathbf A^M$ and $\mathbf A^G$ as well. Thus $dh$ and $hd$ are closure operators on $\mathbf A^M$ and $\mathbf A^G$. Concepts are couples $(x,y)$ where $x\in A^M$, $y\in A^G$, $d(x)=y$, and $h(y)=x$ (or equivalently $(dh(x),h(x))$ for any $x\in A^G$).

Another goal of this paper is to give a representation theorem for reversing Galois connections induced by (fuzzy) formal concepts. Thus we will be able to decide whether given reversing Galois connections are induced by some fuzzy formal concept, or not.

\section{Fuzzy binary relations}

Let us have a residuated lattice $\mathbf A=(A;\vee,\wedge,\cdot,\rightarrow,0,1)$. By an \emph{$\mathbf A$ fuzzy binary relation} (or, briefly, a \emph{fuzzy relation}) between sets $I$ and $J$ we mean any mapping 
$$R \colon I\times J\longrightarrow A.$$
This relation can be interpreted `an element $i$ is in the relation $R$ with a element $j$ in a degree $R (i,j)$'. Classical relations are then modelled by $\{0,1\}$-valued mappings.

\begin{defin}
Let us have a residuated lattice $\mathbf A = (A;\vee,\wedge,\cdot,\rightarrow,0,1)$, arbitrary set $I$ and a $\mathbf A$-relation $R\colon I\times I\longrightarrow A$. Then
\begin{itemize}
\item[i)] $R$ is \emph{reflexive} if $R(i,i)=1$ for all $i\in I$,
\item[ii)] $R$ is \emph{symmetric} if $R(i,j)=R(j,i)$ for all $i,j\in I$,
\item[iii)] $R$ is \emph{transitive} if $R(i,j)\cdot R(j,k)\leq R(i,k)$ for all $i,j,k\in I$.
\end{itemize}
\end{defin}
 
\subsection{Operators induced by fuzzy binary relations} Several operators induced by fuzzy relation were mentioned in the introduction. The purpose of the following definition is to unite the notation.

\begin{defin}
Let us have a complete residuated lattice $\mathbf A = (A;\vee,\wedge,\cdot,\rightarrow,0,1)$, arbitrary sets $I$ and $J$ and an $\mathbf A$-relation $R\colon I\times J\longrightarrow A$. We introduce the operators $\phi_R\colon A^I\longrightarrow A^J$, $\rho_R\colon A^J\longrightarrow A^I$, $\delta_R\colon A^I\longrightarrow A^J$ and $\epsilon_R\colon A^J\longrightarrow A^I$ by the stipulations 
$$\phi_R(x)(j)=\bigwedge_{i\in I} (R(i,j)\rightarrow x(i)) \mbox{ for all } j\in J,\eqno{(\phi_R)}$$
$$\rho_R(x)(i)=\bigvee_{j\in J} (R(i,j)\cdot x(j)) \mbox{ for all } i\in I,\eqno{(\rho_R)}$$
$$\delta_R(x)(j)=\bigwedge_{i\in I} (x(i)\rightarrow R(i,j)) \mbox{ for all } j\in J,\eqno{(\delta_R)}$$
$$\epsilon_R(x)(i)=\bigwedge_{j\in I} (x(j)\rightarrow R(i,j)) \mbox{ for all } i\in I.\eqno{(\epsilon_R)}$$
\end{defin}

Let us have arbitrary sets $A$ and $I$. Then an element $d\in A^I$ is called \emph{diagonal} if it satisfies $d(i)=d(j)$ for any $i,j\in I$. If $d\in A$ then we denote a diagonal element $d^I\in A^I$ by $d^I(i)=d$ for any $i\in I$.

\begin{defin}
Let us have a complete residuated lattice $\mathbf A=(A;\vee,\wedge,\cdot,\rightarrow,0,1)$ and let $I$ and $J$ be arbitrary sets. Then 
\begin{itemize}
\item[($\phi$)] a mapping $\phi: A^I\longrightarrow  A^J$ is called a \emph{$\phi$-type mapping} if it is infima preserving and if the equality $$d^J\rightarrow \phi(x)=\phi(d^I\rightarrow x)$$ holds for all $d\in A$ and $x\in A^I$, 
\item[($\rho$)] a mapping $\rho: A^J\longrightarrow  A^I$ is called a \emph{$\rho$-type mapping} if it suprema preserving and if the equality $$d^I\cdot \rho(x)=\rho(d^J\cdot x)$$ holds for all $d\in A$ and $x\in A^J$, 
\item[($\delta$)] a mapping $\delta: A^I\longrightarrow  A^J$ is called a \emph{$\delta$-type mapping} if it is suprema reversing and if the equality $$d^J\rightarrow \delta(x)=\delta(d^I\cdot x)$$ holds for all $d\in A$ and $x\in A^I$. 
\end{itemize}
\end{defin}

Since $0\in A^I$ and $0\in A^J$ are diagonal elements we obtain
$$\phi (1)=\phi(0\rightarrow x)=0\rightarrow \phi (x)=1,$$
$$\rho (0)=\rho (0\cdot x)=0\cdot \rho (x) = 0,$$
$$\delta (0)=\delta (0\cdot x)=0\rightarrow \delta (x)=1.$$
The defined mappings types are transferable through Galois connection. More precisely:

\begin{lemma}\label{l1}
Let us have a complete residuated lattice $\mathbf A=(A;\vee,\wedge,\cdot,\rightarrow,0,1)$ and let $I$ and $J$ be arbitrary sets. 
\begin{itemize}
\item[i)] Let us have mappings $\phi\colon A^I\longrightarrow A^J$ and $\rho\colon A^J\longrightarrow A^I$ such that $\phi$ and $\rho$ form a Galois connection between $A^I$ and $A^J$. Then $\phi$ is a $\phi$-type mapping if and only if $\rho$ is a $\rho$-type mapping.
\item[ii)] Let us have mappings $\delta\colon A^I\longrightarrow A^J$ and $\epsilon\colon A^J\longrightarrow A^I$ such that $\delta$ and $\epsilon$ form a reversed Galois connection between $A^I$ and $A^J$. Then $\delta$ is a $\delta$-type mapping if and only if $\epsilon$ is a $\delta$-type mapping.
\end{itemize}
\end{lemma}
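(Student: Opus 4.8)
The plan is to notice first that each item contains a ``preservation'' half that is automatic. If $\phi$ and $\rho$ form a Galois connection between the complete lattices $A^I$ and $A^J$, then, by a one-line computation from the equivalence $\rho(x)\le y\iff x\le\phi(y)$ (or directly from Theorem~\ref{rep}), $\phi$ preserves all infima and $\rho$ preserves all suprema; dually, if $\delta$ and $\epsilon$ form a reversed Galois connection then both are suprema reversing. So in each item only the ``diagonal'' identities need to be transported, and every step will come from three ingredients: the residuation equivalence $u\cdot v\le w\iff v\le u\to w$ used coordinatewise in $A^I$ and in $A^J$; the (reversed) Galois equivalence; and the absorption inequalities of Lemma~\ref{ll1} (resp. Lemma~\ref{ll2}), namely $\rho(\phi(a))\le a$ and $a\le\phi(\rho(a))$ (resp. $a\le\epsilon(\delta(a))$ and $a\le\delta(\epsilon(a))$), together with the trivialities $d\cdot(d\to a)\le a$ and $a\le d\to(d\cdot a)$.

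For item i), assuming $\phi$ is a $\phi$-type mapping, I would prove $d^I\cdot\rho(x)=\rho(d^J\cdot x)$ by two inequalities. For ``$\le$'' rewrite it successively as $\rho(x)\le d^I\to\rho(d^J\cdot x)$ (residuation), as $x\le\phi(d^I\to\rho(d^J\cdot x))$ (Galois), and, applying the $\phi$-type identity to the argument, as $x\le d^J\to\phi(\rho(d^J\cdot x))$, i.e. $d^J\cdot x\le\phi(\rho(d^J\cdot x))$, which holds by Lemma~\ref{ll1}. For ``$\ge$'' pass through the Galois equivalence to the equivalent claim $d^J\cdot x\le\phi(d^I\cdot\rho(x))$, then via residuation and the $\phi$-type identity to $x\le\phi(d^I\to(d^I\cdot\rho(x)))$, which follows from $x\le\phi(\rho(x))$, monotonicity of $\phi$, and $\rho(x)\le d^I\to(d^I\cdot\rho(x))$. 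The converse implication (if $\rho$ is a $\rho$-type mapping then $\phi$ is a $\phi$-type mapping) is the formally dual computation --- interchange $\cdot$ with $\to$, suprema with infima, and the two absorption inequalities: from $d^J\to\phi(x)\le\phi(d^I\to x)$ and its reverse, transport each inequality through the Galois and residuation equivalences, rewrite the relevant term with the $\rho$-type identity, and close using $\rho(\phi(a))\le a$ and the triviality $d^I\cdot(d^I\to x)\le x$.

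For item ii) the key observation is that a reversed Galois connection is symmetric in its two members --- the defining equivalence $x\le\delta(y)\iff y\le\epsilon(x)$ is unchanged when one swaps $(\delta,y,I)$ with $(\epsilon,x,J)$ --- so it suffices to prove one direction, say: if $\delta$ is a $\delta$-type mapping then so is $\epsilon$ (its suprema-reversing property being automatic, and the identity to be checked being $d^I\to\epsilon(x)=\epsilon(d^J\cdot x)$). This runs just as in item i): transport each of the two inequalities through the reversed Galois equivalence and coordinatewise residuation, rewrite the $\delta$-image with the $\delta$-type identity, and reduce to $a\le\delta(\epsilon(a))$ and $a\le\epsilon(\delta(a))$. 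The one difference is that $\delta$ and $\epsilon$ are now antitone, so applying them reverses the trivial inequalities $d^I\cdot(d^I\to\epsilon(x))\le\epsilon(x)$ and their analogues --- exactly as the ``reversing'' shape of the identities requires.

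I expect the only step that is not a mechanical chain of adjunctions to be the inequality $\rho(d^J\cdot x)\le d^I\cdot\rho(x)$ used when passing from the $\phi$-type property to the $\rho$-type one: since the unary map $z\mapsto d\cdot z$ has no residual adjoint, an inequality of the form $a\le d\cdot b$ cannot be peeled off directly. The device --- and the real content of the lemma --- is to carry this inequality across the Galois equivalence into $d^J\cdot x\le\phi(d^I\cdot\rho(x))$, where the troublesome factor now sits under $\phi$; residuation together with the $\phi$-type identity then reshape the goal into $x\le\phi(d^I\to(d^I\cdot\rho(x)))$, which monotonicity of $\phi$ and $x\le\phi(\rho(x))$ settle. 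In the converse implication of item i), and everywhere in item ii), this difficulty does not appear, because there the ``output'' scaling in the governing identity is by $\to$ rather than by $\cdot$, and an inequality $a\le d\to b$ residuates freely.
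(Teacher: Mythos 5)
Your proposal is correct and takes essentially the same route as the paper: the preservation/reversing half is obtained from Theorem \ref{rep}, and the diagonal identities are transported back and forth through coordinatewise residuation, the (reversed) Galois equivalence and the assumed type identity. The only difference is presentational --- the paper establishes each equality by a single chain of equivalences against an arbitrary test element (so both sides have the same upper, resp.\ lower, bounds and hence coincide), whereas you split it into two inequalities closed with the unit/counit inequalities of Lemmas \ref{ll1} and \ref{ll2}; like the paper, you prove only one implication in part ii), correctly observing that the other follows from the symmetry of a reversed Galois connection.
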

\begin{proof}
Let the mappings $\phi$ and $\rho$ form a Galois connection between $A^I$ and $A^J$. Theorem \ref{rep} shows that $\phi$ is infima preserving and $\rho$ is suprema preserving. If  $\phi$ is a $\phi$-type mapping then for any $d\in A$, $x\in A^I$ and $y\in A^J$ we have
\begin{eqnarray*}
d^I\cdot \rho(y)\leq x &\text{if and only if}& \rho(y)\leq d^I\rightarrow x\\
&\text{if and only if}& y\leq \phi(d^I\rightarrow x)=d^J\rightarrow \phi(x)\\
&\text{if and only if}& d^J\cdot y\leq \phi(x)\\
&\text{if and only if}& \rho(d^J\cdot y)\leq x.
\end{eqnarray*}
Hence $d^I\cdot\rho(x)=\rho(d^J\cdot x)$ and $g$ is a $\rho$-type mapping.

Conversely, if $g$ is a $\rho$-type mapping then
\begin{eqnarray*}
y\leq d^J\rightarrow \phi(x) &\text{if and only if}& d^J\cdot y\leq \phi(x)\\
&\text{if and only if}& d^I\cdot \rho(y)=\rho(d^J\cdot y)\leq x\\
&\text{if and only if}&  \rho(y)\leq d^I\rightarrow x\\
&\text{if and only if}&  y\leq \phi(d^I\rightarrow x).
\end{eqnarray*}
Hence $d^J\rightarrow \phi(x)=\phi(d^I\rightarrow x)$ and $\phi$ is a $\phi$-type mapping.

Theorem \ref{rep} states that both $\delta$ and $\sigma$ are supremum reversing mappings. Let us assume that $\delta$ is a $\delta$-type mapping. Then
\begin{eqnarray*}
y\leq d^I\rightarrow \epsilon(x) &\text{if and only if}& d^I\cdot y\leq \epsilon(x)\\
&\text{if and only if}& x\leq \delta(d^I\cdot y)=d^J\rightarrow \delta(y)\\
&\text{if and only if}&  d^J\cdot x\leq \delta(y) \\
&\text{if and only if}&  y\leq \epsilon(d^J\cdot x).
\end{eqnarray*}
Hence $d^I\rightarrow \epsilon (x)=\epsilon (d^J\cdot x)$ and $\epsilon$ is $\delta$-type mapping.
\end{proof}

As direct corollary of Theorem \ref{rep} and Lemma \ref{l1} we obtain. 

\begin{cor}\label{cor3}
Let us have a complete residuated lattice $\mathbf A=(A;\vee,\wedge,\cdot,\rightarrow,0,1)$ and let $I$ and $J$ be arbitrary sets. Then
\begin{itemize}
\item[i)] For any $\phi$- type mapping $\phi\colon A^I\longrightarrow A^J$ there exists unique $\rho$-type mapping $\rho\colon A^J\longrightarrow A^I$ such that $\phi$ and $\rho$ form a Galois connection between $A^I$ and $A^J$.
\item[ii)] For any $\rho$- type mapping $\rho\colon A^J\longrightarrow A^I$ there exists unique $\phi$-type mapping $\phi\colon A^I\longrightarrow A^J$ such that $\phi$ and $\rho$ form a Galois connection between $A^I$ and $A^J$.
\item[ii)] For any $\delta$- type mapping $\delta\colon A^I\longrightarrow A^J$ there exists unique $\delta$-type mapping $\epsilon\colon A^J\longrightarrow A^I$ such that $\delta$ and $\epsilon$ form a reversing Galois connection between $A^I$ and $A^J$.
\end{itemize}
\end{cor}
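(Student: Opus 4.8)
The plan is to glue together Theorem \ref{rep} and Lemma \ref{l1}, so there is little to do beyond bookkeeping. First I would record the standing observation that, since $\mathbf A$ is a complete residuated lattice, the powers $\mathbf A^I$ and $\mathbf A^J$ are complete lattices with suprema and infima computed coordinatewise; this is what licenses the application of Theorem \ref{rep}.

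For part i), I would take a $\phi$-type mapping $\phi\colon A^I\longrightarrow A^J$. By definition it is infima preserving (hence monotone), so Theorem \ref{rep}i) produces a monotone $\rho\colon A^J\longrightarrow A^I$, given explicitly by $\rho(x)=\bigwedge\{a\in A^I\mid x\leq\phi(a)\}$, such that $\phi$ and $\rho$ form a Galois connection; by the uniqueness remark following Theorem \ref{rep}, this $\rho$ is the only mapping $A^J\longrightarrow A^I$ forming a Galois connection with $\phi$. Then Lemma \ref{l1}i), applied to the pair $\phi,\rho$, upgrades the fact that $\phi$ is a $\phi$-type mapping to the conclusion that $\rho$ is a $\rho$-type mapping. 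Uniqueness of $\rho$ \emph{as a $\rho$-type mapping} is then automatic, since it is already unique among all mappings forming a Galois connection with $\phi$.

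Parts ii) and iii) go through verbatim with the obvious substitutions. For ii) I would start from a $\rho$-type mapping $\rho$, which is suprema preserving, apply Theorem \ref{rep}ii) to obtain the unique $\phi$ forming a Galois connection with it, and use Lemma \ref{l1}i) (the ``if'' direction) to conclude that $\phi$ is a $\phi$-type mapping. For iii) I would start from a $\delta$-type mapping $\delta$, which is suprema reversing, apply Theorem \ref{rep}iii) to obtain the unique $\epsilon$ forming a reversed Galois connection with it, and invoke Lemma \ref{l1}ii) to conclude that $\epsilon$ is again a $\delta$-type mapping.

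I do not anticipate any genuine obstacle here; the only points needing a moment's care are (a) checking that $\mathbf A^I$ and $\mathbf A^J$ are complete, so that Theorem \ref{rep} is applicable, and (b) matching the notation --- the map called $g$ in Theorem \ref{rep} is precisely the $\rho$ (resp.\ $\phi$, $\epsilon$) of the statement, and ``unique mapping forming a (reversed) Galois connection'' already subsumes the narrower ``unique mapping of the prescribed type.''
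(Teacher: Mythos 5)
Your argument is correct and matches the paper exactly: the paper states this result without proof as a ``direct corollary of Theorem \ref{rep} and Lemma \ref{l1},'' which is precisely the gluing you carry out (existence and uniqueness of the adjoint from Theorem \ref{rep} on the complete lattices $\mathbf A^I$, $\mathbf A^J$, transfer of the $\phi$-, $\rho$-, $\delta$-type property via Lemma \ref{l1}). Your two points of care --- completeness of the powers and that uniqueness among all adjoints subsumes uniqueness among typed ones --- are exactly the right bookkeeping.
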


The following theorem explains the definitions stated above. The part that states reversing Galois connections between $\delta_R$ and $\epsilon_R$ is well known \cite{Bel1}. However we include its short proof.

\begin{theorem}\label{op}
Let us have a complete residuated lattice $\mathbf A=(A;\vee,\wedge,\cdot,\rightarrow,0,1)$, let $I$ and $J$ be arbitrary sets and let $R\colon I\times J\longrightarrow A$ be a fuzzy relation. Then 
\begin{itemize}
\item[i)] mappings $\phi_R$ and $\rho_R$ form a Galois connection between $A^I$ and $A^J$. Moreover, the mapping $\phi_R$ is a $\phi$-type mapping and $\rho_R$ is a $\rho$-type mapping.
\item[ii)] mappings $\delta_R$ and $\epsilon_R$ form a reversing Galois connection between $A^I$ and $A^J$. Moreover, both mappings $\delta_R$ and $\epsilon_R$ are $\delta$-type mappings.
\end{itemize}
\end{theorem}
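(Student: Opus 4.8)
The plan is to verify Theorem~\ref{op} in two independent parts, each consisting of (a) checking the adjointness property that produces the (reversing) Galois connection, and (b) checking the diagonal-compatibility identity that upgrades the map to a $\phi$-type, $\rho$-type, or $\delta$-type mapping. For part (a) I would not prove adjointness by hand but instead invoke Lemma~\ref{ll1} (resp.\ Lemma~\ref{ll2}): it suffices to show the two composite inequalities. For $\phi_R$ and $\rho_R$ this means checking $\rho_R\phi_R(x)\leq x$ for $x\in A^I$ and $y\leq \phi_R\rho_R(y)$ for $y\in A^J$; both reduce, pointwise, to the basic residuated-lattice inequalities $R(i,j)\cdot\bigl(R(i,j)\rightarrow x(i)\bigr)\leq x(i)$ and $a\leq R(i,j)\rightarrow\bigl(R(i,j)\cdot a\bigr)$, together with the fact that $\bigvee$ and $\bigwedge$ exist because $\mathbf A$ is complete and interact correctly via Lemma~\ref{rez}. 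For $\delta_R$ and $\epsilon_R$ the reversing adjointness $x\leq\delta_R(y)\iff y\leq\epsilon_R(x)$ unfolds, by the adjointness of $\cdot$ and $\rightarrow$, into the single symmetric condition $x(i)\cdot y(j)\leq R(i,j)$ for all $i,j$, which is manifestly symmetric in the roles of $\delta_R$ and $\epsilon_R$; this is the short known argument referenced to \cite{Bel1}.

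For part (b), once the Galois connections are in hand, Lemma~\ref{l1} does most of the work: for (i) it is enough to establish that $\phi_R$ is a $\phi$-type mapping, and then $\rho_R$ is automatically a $\rho$-type mapping; for (ii) it is enough to show $\delta_R$ is a $\delta$-type mapping, and then $\epsilon_R$ is automatically one as well. So the real content reduces to two identities. First, infima preservation of $\phi_R$ and suprema reversal of $\delta_R$: these follow by interchanging $\bigwedge$'s (resp.\ turning a $\bigvee$ into a $\bigwedge$) using parts (ii) and (iii) of Lemma~\ref{rez} and the fact that infima in the product $A^J$ are computed pointwise. Second, the diagonal identities. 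For $\phi_R$ I must check $d^J\rightarrow\phi_R(x)=\phi_R(d^I\rightarrow x)$; evaluating both sides at $j\in J$ and using Lemma~\ref{rez}(iii) to pull $d$ through the infimum, this reduces to the pointwise identity $d\rightarrow\bigl(R(i,j)\rightarrow x(i)\bigr)=R(i,j)\rightarrow\bigl(d\rightarrow x(i)\bigr)$, i.e.\ the exchange law $a\rightarrow(b\rightarrow c)=b\rightarrow(a\rightarrow c)$, which holds in every residuated lattice because $\cdot$ is commutative and associative (both sides equal $(a\cdot b)\rightarrow c$). Similarly, the identity $d^J\rightarrow\delta_R(x)=\delta_R(d^I\cdot x)$ for $\delta_R$ reduces pointwise to $d\rightarrow\bigl(x(i)\rightarrow R(i,j)\bigr)=(d\cdot x(i))\rightarrow R(i,j)$, which is again the adjunction/associativity identity $a\rightarrow(b\rightarrow c)=(a\cdot b)\rightarrow c$.

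Putting this together, the proof structure is: (1) apply Lemma~\ref{ll1}/\ref{ll2} to get the (reversing) Galois connections, reducing to two-line pointwise computations with residua; (2) apply Lemma~\ref{rez} to get infima preservation / suprema reversal; (3) verify the single diagonal identity for $\phi_R$ (resp.\ $\delta_R$) using the residuation exchange laws, again pointwise; (4) invoke Lemma~\ref{l1} to transfer the $\phi$-type / $\delta$-type property across the connection to $\rho_R$ (resp.\ $\epsilon_R$). I do not anticipate a genuine obstacle here: every step is a routine unwinding of definitions combined with a standard residuated-lattice identity. The only point requiring mild care is bookkeeping the direction of the order in the reversing case and making sure the suprema and infima invoked actually exist, which is guaranteed by completeness of $\mathbf A$ and hence of the power lattices $A^I$, $A^J$; and, in part~(ii), correctly observing that the defining inequality $x(i)\cdot y(j)\leq R(i,j)$ is symmetric, so that the "Moreover, both mappings are $\delta$-type" clause is not an extra burden but follows from Lemma~\ref{l1}(ii).
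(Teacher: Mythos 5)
Your proposal is correct and follows essentially the same route as the paper: adjointness is unwound pointwise to $R(i,j)\cdot y(j)\leq x(i)$ (resp.\ the symmetric condition $x(i)\cdot y(j)\leq R(i,j)$), the type identities are verified by a one-line pointwise computation using Lemma~\ref{rez} and the exchange law $a\rightarrow(b\rightarrow c)=(a\cdot b)\rightarrow c$, and the property is transferred to the adjoint via Lemma~\ref{l1}. The only cosmetic differences are that you invoke Lemma~\ref{ll1}/\ref{ll2} through the composite inequalities (which additionally requires the routine observation that $\phi_R,\rho_R$ are monotone and $\delta_R,\epsilon_R$ antitone) where the paper writes a direct chain of equivalences, and you check the diagonal identity on $\phi_R$ rather than on $\rho_R$ before transferring; neither affects correctness.
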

\begin{proof}
For any $x\in A^I$ and $y\in A^J$ we deduce:
\begin{eqnarray*}
y\leq \phi_R(x) &\text{ if and only if }& y(j) \leq \phi_R(x)(j)\quad (\forall j\in J)\\
&\text{ if and only if }& y(j) \leq \bigwedge_{i\in I}(R(i,j)\rightarrow x(i))\quad (\forall j\in J)\\
&\text{ if and only if }& y(j) \leq R(i,j)\rightarrow x(i)\quad (\forall i\in I,\forall j\in J)\\
&\text{ if and only if }& R(i,j)\cdot y(j) \leq x(i)\quad (\forall i\in I,\forall j\in J)\\
&\text{ if and only if }& \bigvee_{j\in J}(R(i,j)\cdot y(j)) \leq x(i)\quad (\forall i\in I)\\
&\text{ if and only if }& \rho_R(y)(i) \leq x(i)\quad (\forall i\in I)\\
&\text{ if and only if }& \rho_R(y) \leq x.
\end{eqnarray*}

Thus $\phi_R$ and $\rho_R$ form a Galois connection between $A^I$ and $A^J$. Consequently  $\phi_R$ is infima preserving and $\rho_R$ is suprema preserving (see Theorem \ref{rez}). Moreover, using Lemma \ref{rez} we have 
$$(d^I\cdot \rho_R(x))(i)=d\cdot\bigvee_{j\in J}(R (i,j)\cdot x(j)) = \bigvee_{j\in J}(R(i,j)\cdot d\cdot x(j))=\rho_R(d^J\cdot x)(i).$$
 for any $d\in A$. Hence $\rho_R$ is a $\rho$-type mapping and, due to Lemma \ref{l1}, also $\phi_R$ is a $\phi$-type mapping.
 
Analogously, if $x\in A^I$ and $y\in A^J$ then we have:
\begin{eqnarray*}
y\leq \delta_R(x) &\text{ if and only if }& y(j) \leq \delta_R(x)(j)\quad (\forall j\in J)\\
&\text{ if and only if }& y(j) \leq \bigwedge_{i\in I}(x(i)\rightarrow R(i,j))\quad (\forall j\in J)\\
&\text{ if and only if }& y(j) \leq x(i)\rightarrow R(i,j)\quad (\forall i\in I,\forall j\in J)\\
&\text{ if and only if }& x(i) \leq y(j)\rightarrow R(i,j)\quad (\forall i\in I,\forall j\in J)\\
&\text{ if and only if }& x(i) \leq \bigwedge_{j\in J}(y(j)\rightarrow R(i,j))\quad (\forall i\in I)\\
&\text{ if and only if }& x(i) \leq \epsilon_R(y)(i)\quad (\forall i\in I)\\
&\text{ if and only if }& x \leq \epsilon_R (y).
\end{eqnarray*}

Thus $\delta_R$ and $\epsilon_R$ form a reversing Galois connection between $A^I$ and $A^J$. Consequently $\delta_R$ and $\epsilon_R$ are a suprema reversing as well (see Theorem \ref{rez}). Moreover, from Lemma \ref{rez} and $x\rightarrow (y \rightarrow z)= (x\cdot y)\rightarrow z$ (see \cite{Haj}) we have 
$$(d^J\rightarrow \delta_R(x))(j)=d\rightarrow\bigwedge_{i\in I}(x(i)\rightarrow R (i,j)) = \bigwedge_{i\in I}((d\cdot x(i))\rightarrow R (i,j))=\delta_R(d^I\cdot x)(j).$$
 for any $d\in A$. Hence $\delta_R$ is a $\delta$-type mapping and, due to Lemma \ref{l1}, also $\epsilon_R$ is $\delta$-type mapping.
\end{proof}

\section{Representation theorems}

In this section we will present the main results of the paper. In the following theorem we characterize $\phi$, $\rho$ and $\delta$-type mappings as mappings induced by fuzzy relations. A result similar to the second part has been proved for quantals by S. Solovjovs \cite{Sol}.

\begin{theorem}\label{hlavni}
Let us have a complete residuated lattice $\mathbf A=(A;\vee,\wedge,\cdot,\rightarrow,0,1)$ and let $I$ and $J$ be arbitrary sets. 
\begin{itemize}
\item[i)] Let us have a mapping $\phi\colon A^I\longrightarrow A^J$ (resp. $\rho\colon A^J\longrightarrow A^I$). Then $\phi$ is a $\phi$-type mapping resp. $\rho$ is a $\rho$-type mapping) if and only if there exists an $\mathbf A$-relation $R\colon$ $I\times J\longrightarrow A$ such that $\phi=\phi_R$ (or $\rho=\rho_R$). Moreover, a $\phi$-type mapping and a $\rho$-type mapping form a Galois connection if and only if they are induced by same fuzzy relation. 

\item[ii)] Let us have a mapping $\delta\colon A^I\longrightarrow A^J$ (or $\epsilon\colon A^J\longrightarrow A^I$). Then $\delta$ is a $\delta$-type mapping (or $\epsilon$ is a $\delta$-type mapping) if and only if there exists an $\mathbf A$-relation $R\colon I\times J\longrightarrow A$ such that $\delta=\delta_R$ (or $\epsilon=\epsilon_R$). Moreover, a $\delta$-type mapping and a converse $\delta$-type mapping form a reversing Galois connection if and only if they are induced by same fuzzy relation.
\end{itemize}
\end{theorem}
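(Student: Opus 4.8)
The implications stating that $\phi_R$ is a $\phi$-type mapping, $\rho_R$ a $\rho$-type mapping, and $\delta_R,\epsilon_R$ are $\delta$-type mappings are exactly the content of Theorem~\ref{op}; so the work lies in the converse directions and in the two ``moreover'' uniqueness claims. My plan is to recover the relation directly in the $\rho$-type and $\delta$-type cases, whose defining identities carry a $\cdot$ on the argument side, and to reduce the $\phi$-type case to the $\rho$-type case via the adjoint.

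For $j\in J$ let $\chi_j\in A^J$ be the point mass, $\chi_j(j)=1$ and $\chi_j(k)=0$ for $k\neq j$. Two facts, both checked pointwise, will be used repeatedly: $d^J\cdot\chi_j$ is the function with value $d$ at $j$ and $0$ elsewhere, and every $x\in A^J$ satisfies $x=\bigvee_{j\in J}\bigl((x(j))^J\cdot\chi_j\bigr)$. Given a $\rho$-type mapping $\rho\colon A^J\longrightarrow A^I$, set $R(i,j):=\rho(\chi_j)(i)$. Since $\rho$ is suprema preserving it distributes over the displayed join, and the identity $\rho(d^J\cdot y)=d^I\cdot\rho(y)$ applied with $y=\chi_j$ and $d=x(j)$ gives $\rho(x)=\bigvee_{j\in J}\bigl((x(j))^I\cdot\rho(\chi_j)\bigr)$; evaluating at $i$ yields $\rho(x)(i)=\bigvee_{j\in J}\bigl(R(i,j)\cdot x(j)\bigr)=\rho_R(x)(i)$, so $\rho=\rho_R$.

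For a $\phi$-type mapping $\phi$ a direct read-off is unavailable: the $\phi$-type identity has $\rightarrow$ on the argument side, so evaluating $\phi$ on point masses recovers only the negations of the entries of $R$. Instead, Corollary~\ref{cor3} yields the $\rho$-type mapping $\rho$ forming a Galois connection with $\phi$; by the previous paragraph $\rho=\rho_R$ for some $\mathbf A$-relation $R$, and by Theorem~\ref{op} $\phi_R$ is a $\phi$-type mapping forming a Galois connection with $\rho_R=\rho$. As left adjoints are unique (Theorem~\ref{rep}), $\phi=\phi_R$.

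The $\delta$-type case parallels the $\rho$-type one: for $i\in I$ let $\chi_i\in A^I$ be the point mass at $i$, so $x=\bigvee_{i\in I}\bigl((x(i))^I\cdot\chi_i\bigr)$, and for a $\delta$-type mapping $\delta$ put $R(i,j):=\delta(\chi_i)(j)$; since $\delta$ is suprema reversing it sends this join to a meet, and the identity $\delta(d^I\cdot y)=d^J\rightarrow\delta(y)$ gives $\delta(x)(j)=\bigwedge_{i\in I}\bigl(x(i)\rightarrow R(i,j)\bigr)=\delta_R(x)(j)$. The claim for a converse $\delta$-type $\epsilon\colon A^J\longrightarrow A^I$ follows symmetrically with $R(i,j):=\epsilon(\chi_j)(i)$, or from Corollary~\ref{cor3} and uniqueness of the opposite map. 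For the ``moreover'' clauses one direction is Theorem~\ref{op}; conversely, if $\phi=\phi_R$ and $\rho=\rho_{R'}$ form a Galois connection, then $\rho$ is the unique adjoint of $\phi_R$, hence $\rho_{R'}=\rho_R$, and the recovery formula $R(i,j)=\rho_R(\chi_j)(i)$ forces $R'=R$; the reversing case is identical with $\epsilon_R$ in place of $\rho_R$. The one genuinely delicate point is this detour through the adjoint for $\phi$-type mappings; the rest is routine bookkeeping once the point-mass decompositions are in hand.
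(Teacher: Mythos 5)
Your argument is correct, but it takes a genuinely different route from the paper's. You read the relation off from the mapping's values on point masses, setting $R(i,j)=\rho(\chi_j)(i)$ (resp.\ $R(i,j)=\delta(\chi_i)(j)$, $R(i,j)=\epsilon(\chi_j)(i)$), using the decomposition $x=\bigvee_{j}\bigl((x(j))^{J}\cdot\chi_j\bigr)$ together with suprema preservation/reversal and the diagonal identity, and you settle the $\phi$-type case by passing to the unique $\rho$-type adjoint (Corollary \ref{cor3}) and invoking uniqueness of the opposite mapping (Theorem \ref{rep}); all of these steps are legitimate. The paper proceeds the other way around: it defines $R(i,j)=\bigwedge_{a\in A^I}(\phi(a)(j)\rightarrow a(i))$ (resp.\ $R(i,j)=\bigvee_{a\in A^I}(\delta(a)(j)\cdot a(i))$), obtains the inequality $\phi_R\geq\phi$ (resp.\ $\delta_R\geq\delta$) at once, and gets the reverse inequality through the sets $M(i,j)$ and $M_j=\{x\in A^I\mid\phi(x)(j)=1\}$, whose least (resp.\ greatest) element exists because $\phi$ preserves infima (resp.\ $\delta$ reverses suprema); the mappings $\rho$ and $\epsilon$ are then recovered via the adjoint, i.e.\ the mirror image of your division of labour. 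Your version is shorter and more transparent, and the point-mass evaluation additionally gives injectivity of $R\mapsto\rho_R$ and $R\mapsto\epsilon_R$ for free, which is exactly what you use for the ``moreover'' clauses. What the paper's heavier formula for $R$ buys is that it never requires the characteristic functions $\chi_j$ to lie in the domain: the same computation is repeated essentially verbatim in Theorems \ref{pt1} and \ref{pt2} for semisimple Pavelka's algebras, where the algebra is only a subalgebra of $[0,1]^{\s\mathbf A}$ and point masses are generally unavailable, so your read-off would not transfer to that setting without modification.
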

\begin{proof}
{\it i)} Let us have a $\phi$-type mapping $\phi\colon  A^I\longrightarrow A^J$. And let us define
$$R (i,j)=\bigwedge_{a\in A^I}(\phi(a)(j)\rightarrow a(i)).\eqno{(R_\phi)}$$  Clearly, the inequality $R(i,j) \leq \phi(x)(j)\rightarrow x(i)$ holds for any $x\in A^I$, $i\in I$, and $j\in J$. Thus 
\begin{eqnarray*}
\phi_R(x)(j)&=&\bigwedge_{i\in I} (R(i,j)\rightarrow x(i))\\
&\geq& \bigwedge_{i\in X}((\phi(x)(j)\rightarrow x(i))\rightarrow x(i))\\
&\geq& \phi(x)(j).
\end{eqnarray*}
Let us define a set
$$M(i,j)=\{x(i)\mid \phi(x)(j)=1\}.$$
We can state the following claim.  
\begin{cl}\label{cl1}
$R (i,j)=\bigwedge M(i,j)$.
\end{cl}
\begin{proof}
If $x(i)\in M(i,j)$ then $\phi(x)(j)=1$ and, consequently, $R (i,j)=\bigwedge_{a\in A^I}(\phi(a)(j)\rightarrow a(i))\leq \phi(x)(j)\rightarrow x(i)=x(i)$. Hence $R (i,j)\leq \bigwedge M(i,j)$.

On the other hand, let $a\in A^I$, $i\in I$ and $j\in J$. Denoting $f=\phi(a)(j)$ we obtain $1=f\rightarrow \phi(a)(j)=(f^J\rightarrow \phi(a))(j)=\phi(f^I\rightarrow a)(j)$. Thus $\phi(a)(j)\rightarrow a(i)=f\rightarrow a(i)=(f^I\rightarrow a)(i)\in M(i,j)$ which implies $\bigwedge M(i,j)\leq \bigwedge_{a\in A^I} (\phi(a)(j)\rightarrow a(i))=R(i,j)$.
\end{proof}

\begin{cl}\label{cl2}
The set $M_j=\{x\in A^I\mid \phi(x)(j)=1\}$ is a lattice filter on lattice reduct of $\mathbf A^I$ closed on (infinite) infima. Consequently, there exists a least element $m\in M_j$.
\end{cl}
\begin{proof}
Since $1\in M_j$, we have $\emptyset\not=M_j$. Since $M\subseteq M_j$. Because $A^J$ is a complete lattice and $\phi$ is infimum preserving, we have
$$\phi\big(\bigwedge M\big)(j)=\big(\bigwedge \{\phi(x)\mid x\in M\}\big)(j)=\bigwedge\{\phi(x)(j)\mid x\in M\}=1$$ and $\bigwedge M\in M_j$.
\end{proof}
Let $j\in J$. Claim \ref{cl2} prove an existence of the least element $m\in M_j$ and, moreover,
$$m(i)=\big(\bigwedge M_j \big )(i)=\bigwedge \{x(i)\mid \phi(x)(j)\}=\bigwedge M(i,j).$$
Let $x\in A^I$. From Claim \ref{cl1} we obtain
$\phi_R(x)(j)=\bigwedge_{i\in I} (m(i)\rightarrow x(i))$ and denoting $f=\phi_R(x)(j)$ we obtain $f\leq m(i)\rightarrow x(i)$ for any $i\in I$. Due to the adjointness property we have $m(i)\leq f\rightarrow x(i)=(f^I\rightarrow x)(i)$ for any $i\in I$. Then $m\leq f^I\rightarrow x$ and $f^I\rightarrow x\in M_j$. Using the definition of the set $M_j$ we have proved that $$1=\phi(f^I\rightarrow x)(j)=(f^J\rightarrow \phi(x))(j)=f\rightarrow \phi(x)(j)=\phi_R(x)(j)\rightarrow \phi(x)(j)$$ and consequently $\phi_R(x)(j)\leq \phi(x)(j)$ for any $x\in A^I$ and any $j\in J$.

Altogether, $\phi=\phi_R$ holds. If $\phi$ and $\rho$ form a Galois connection then, due to Theorems \ref{rep} and \ref{op}, we obtain $\rho=\rho_R$.

{\it ii)} Let us have a $\delta$-type mapping $\delta\colon A^I\longrightarrow A^J$. We denote $$R(i,j)=\bigvee_{a\in A^I}(\delta(a)(j)\cdot a(i)).\eqno{(R_\delta)}$$
Since $R(i,j)\geq \delta(x)(j)\cdot x(i)$ holds for all $x\in A^i$, we obtain 
\begin{eqnarray*}
\delta_R(x)(j)&=&\bigwedge_{i\in I}(x(i)\rightarrow R(i,j))\\
&\geq & \bigwedge_{i\in I}(x(i)\rightarrow (\delta(x)(j)\cdot x(i))\\
&\geq &\delta (x)(j).
\end{eqnarray*} 
Analogously to previous part we denote
$$M(i,j)=\{x(i)\mid \delta (x)(j)=1\}.$$
\begin{cl}\label{c3}
$R(i,j)=\bigvee M(i,j)$.
\end{cl}
\begin{proof}
If $x(i)\in M(i,j)$ then $\delta(x)(j)=1$ and, consequently, $R (i,j)=\bigvee_{a\in A^I}(a(i)\cdot \delta(a)(j))\geq x(i)\cdot \delta(a)(j)=x(i)$. Hence $R (i,j)\geq \bigvee M(i,j)$.

On the other hand, let $a\in A^I$, $i\in I$ and $j\in J$. Denoting $f=\delta(a)(j)$ we obtain $1=h\rightarrow \delta(a)(j)=(h^J\rightarrow \delta(a))(j)=\delta(h^I\cdot a)(j)$. Thus $\delta(a)(j)\cdot a(i)=h\cdot a(i)=(h^I\cdot a)(i)\in M(i,j)$ and $\bigvee M(i,j)\geq \bigvee_{a\in A^I} (\delta(a)(j)\cdot a(i))=R(i,j)$.
\end{proof}

\begin{cl}\label{cl4}
The set $M_j=\{x\in A^I\mid \delta(x)(j)=1\}$ is a lattice ideal on a lattice reduct of $\mathbf A^I$ closed on (infinite) suprema. Consequently, there exists a greatest element $m\in M_j$.
\end{cl}
\begin{proof}
Clearly $0\in M_j$ and therefore $M_j\not=\emptyset$. Let $M\subseteq M_j$. Since $A^J$ is a complete lattice and $\delta$ is suprema reversing, we have
$$\delta\big(\bigvee M\big)(j)=\big(\bigwedge \{\delta(x)\mid x\in M\}\big)(j)=\bigwedge\{\delta(x)(j)\mid x\in M\}=1$$ and $\bigvee M\in M_j$.
\end{proof}

Let $j\in J$. Claim \ref{cl4} prove an existence of the greatest element $m\in M_j$ and, moreover,
$$m(i)=\big(\bigvee M_j \big )(i)=\bigvee \{x(i)\mid \delta(x)(j)\}=\bigvee M(i,j).$$
Let $x\in A^I$ from Claim \ref{c3} we obtain
$\delta_R(x)(j)=\bigwedge_{i\in I} (x(i)\rightarrow m(i))$ and thus denoting $h=\delta_R(x)(j)$ we obtain $h\leq x(i)\rightarrow m(i)$ for any $i\in I$. Due to the adjointness property we have also $m(i)\geq h\cdot x(i)=(h^I\cdot x)(i)$ for any $i\in I$. Then $m\geq h^I\cdot x$ and $h^I\cdot x\in M_j$. Using the definition of the set $M_j$ we have proved that $$1=\delta(h^I\cdot x)(j)=(h^J\rightarrow \delta(x))(j)=h\rightarrow \delta(x)(j)=\delta_R(x)(j)\rightarrow \delta(x)(j)$$ and consequently $\delta_R(x)(j)\leq \delta(x)(j)$ for any $x\in A^I$ and $j\in J$.

Altogether, $\delta=\delta_R$ holds. If $\delta$ and $\epsilon$ form a reversing Galois connection then, due to Theorems \ref{rep} and \ref{op}, we obtain $\epsilon=\epsilon_R$.
\end{proof}

\begin{lemma}\label{interclosezpet}
Let us have a $\phi$-type mapping $\phi\colon A^I\longrightarrow A^J$ and a $\rho$-type mapping $\rho\colon A^J\longrightarrow A^I$ forming a Galois connection where $\mathbf A=(A;\vee, \wedge,\cdot,\rightarrow,0,1)$ is a residuated lattice. Then $\rho\phi$ is an interior operator on $\mathbf A^I$ satisfying $d^I\cdot \rho\phi (x)\leq \rho\phi (d^I\cdot x)$. Moreover, $\phi\rho$ is a closure operator on $\mathbf A^J$ satisfying $d^J\cdot \phi\rho (x)\leq \phi\rho (d^J\cdot x)$.

Let us have $\delta$-type mappings $\delta\colon A^I\longrightarrow A^J$ and  $\epsilon\colon A^J\longrightarrow A^I$ forming a reversing Galois connection. Then $\epsilon\delta$ is a closure operator on $\mathbf A^I$ satisfying $d^I\cdot\epsilon \delta\phi (x)\leq \epsilon\delta (d^I\cdot x)$.
\end{lemma}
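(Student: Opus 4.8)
The three structural assertions---that $\rho\phi$ is an interior operator, that $\phi\rho$ is a closure operator, and that $\epsilon\delta$ is a closure operator---are immediate from Corollaries \ref{c1} and \ref{c2} applied to the given Galois, respectively reversing Galois, connection, so the content of the lemma lies in the three inequalities involving a diagonal scalar $d\in A$. I plan to derive all of them from the defining identities of $\phi$-, $\rho$- and $\delta$-type mappings together with elementary facts about $\mathbf A$ and its powers: the counit inequality $d\cdot(d\rightarrow a)\leq a$, the unit inequality $a\leq d\rightarrow(d\cdot a)$, the adjointness $d^K\cdot y\leq z$ iff $y\leq d^K\rightarrow z$, the fact that infima preserving and suprema preserving maps are monotone, and the fact that suprema reversing maps are antitone.

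The key auxiliary step is that \emph{every $\phi$-type mapping $\phi\colon A^I\longrightarrow A^J$ satisfies $d^J\cdot\phi(x)\leq\phi(d^I\cdot x)$ for all $d\in A$ and $x\in A^I$}: by adjointness this is equivalent to $\phi(x)\leq d^J\rightarrow\phi(d^I\cdot x)$, and applying the $\phi$-type identity with $d^I\cdot x$ in place of $x$ rewrites the right-hand side as $\phi\big(d^I\rightarrow(d^I\cdot x)\big)$, which dominates $\phi(x)$ since $x\leq d^I\rightarrow(d^I\cdot x)$ and $\phi$ is monotone. The covariant part then follows at once. For $x\in A^I$, using that $\rho$ is $\rho$-type (hence monotone, with $d^I\cdot\rho(y)=\rho(d^J\cdot y)$) and the auxiliary step,
$$d^I\cdot\rho(\phi(x))=\rho\big(d^J\cdot\phi(x)\big)\leq\rho\big(\phi(d^I\cdot x)\big),$$
which is the inequality $d^I\cdot\rho\phi(x)\leq\rho\phi(d^I\cdot x)$; and for $x\in A^J$, applying the auxiliary step to $\phi$ at the point $\rho(x)$ and then the $\rho$-type identity,
$$d^J\cdot\phi(\rho(x))\leq\phi\big(d^I\cdot\rho(x)\big)=\phi\big(\rho(d^J\cdot x)\big),$$
which is the inequality $d^J\cdot\phi\rho(x)\leq\phi\rho(d^J\cdot x)$.

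For the contravariant part, recall that $\delta$ and $\epsilon$ are both $\delta$-type, so $\delta(d^I\cdot x)=d^J\rightarrow\delta(x)$ and $\epsilon(d^J\cdot y)=d^I\rightarrow\epsilon(y)$, and that $\epsilon$ is antitone. Fix $x\in A^I$; by adjointness the required inequality $d^I\cdot\epsilon(\delta(x))\leq\epsilon(\delta(d^I\cdot x))$ is equivalent to $\epsilon(\delta(x))\leq d^I\rightarrow\epsilon\big(\delta(d^I\cdot x)\big)$. Applying the $\delta$-type identity for $\epsilon$ and then the one for $\delta$ rewrites the right-hand side as $\epsilon\big(d^J\cdot\delta(d^I\cdot x)\big)=\epsilon\big(d^J\cdot(d^J\rightarrow\delta(x))\big)$, and since $d^J\cdot(d^J\rightarrow\delta(x))\leq\delta(x)$ and $\epsilon$ is antitone, this element is $\geq\epsilon(\delta(x))$, which is exactly what was wanted.

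I do not expect a real obstacle; the only delicate point is this last chain, where the two $\delta$-type identities must be invoked in the correct order and the antitonicity of $\epsilon$ reverses the closing inequality. As an alternative, one could quote Theorem \ref{hlavni} to replace $\phi,\rho,\delta,\epsilon$ by $\phi_R,\rho_R,\delta_R,\epsilon_R$ and verify the three inequalities directly from the defining suprema and infima with the help of Lemma \ref{rez}; the abstract argument sketched above is shorter and does not even use completeness of $\mathbf A$.
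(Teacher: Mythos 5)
Your proposal is correct and follows essentially the same route as the paper: the operator statements are quoted from Corollaries \ref{c1} and \ref{c2}, the covariant inequalities come from the auxiliary fact $d^J\cdot\phi(x)\leq\phi(d^I\cdot x)$ (proved exactly as in the paper, via $x\leq d^I\rightarrow(d^I\cdot x)$, monotonicity and the $\phi$-type identity) combined with the $\rho$-type identity, and the contravariant inequality uses the same ingredients (both $\delta$-type identities, $d^J\cdot(d^J\rightarrow\delta(x))\leq\delta(x)$, antitonicity of $\epsilon$), merely packaged through one adjointness step on the composite instead of an intermediate inequality. Incidentally, your displays also correct the small typos in the paper's second covariant inequality and in the statement's ``$\epsilon\delta\phi$''.
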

\begin{proof}
Corollary \ref{c1} shows that $\rho\phi$ is an interior operator and that $\phi\rho$ is a closure operator. Let $d\in A$ and $x\in A^I$. Then $\phi (x)\leq \phi(d^I\rightarrow (d^I\cdot x))=d^J\rightarrow \phi(d^I\cdot x)$. Using the adjointness property we obtain $d^J\cdot \phi(x)\leq \phi (d^I\cdot x)$ and, consequently, 
$$d^I\cdot \rho \phi(x) =  \rho(d^J\cdot \phi(x))\leq \rho\phi (d^I\cdot x)$$ and
$$d^J\cdot \phi \rho(x) \leq   \phi(d^J\cdot \rho(x)) = \phi\rho (d^I\cdot x).$$

Analogously, Corollary \ref{c2} states that $\epsilon\delta$ is a closure operator. Moreover, using the antitonicity of $\epsilon$ we obtain $d^I\rightarrow \epsilon (d^J\rightarrow x)=\epsilon (d^J\cdot(d^J\rightarrow x))\geq \epsilon (x)$. Hence $d^I\cdot \epsilon (x)\leq \epsilon (d^J\rightarrow x)$ and
$$d^I\cdot \epsilon\delta (x)\leq \epsilon (d^J\rightarrow\delta (x) )=\epsilon\delta (d^I\cdot x).$$
\end{proof}

\begin{theorem}\label{inter}
If $\mathbf A=(A;\vee,\wedge,\cdot ,\rightarrow,0,1)$ is a residuated lattice and $I$ is an arbitrary set then any interior operator $O\colon A^I\longrightarrow A^I$ satisfying $d^I\cdot O(x)\leq O(d^I\cdot x)$ can be expressed by $O=\phi_R\rho_R$ where $R$ is a fuzzy relation.
\end{theorem}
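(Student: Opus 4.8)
\textit{Proof plan.} The plan is to exhibit $O$ as $\phi_R\rho_R$ --- that is, $\rho_R$ applied after $\phi_R$ --- for a suitable fuzzy relation $R$, by taking the second index set to be the set $F$ of fixed points of $O$ and declaring the column of $R$ indexed by $y\in F$ to be the open element $y$ itself. By Theorem~\ref{op} and Corollary~\ref{c1}, for any such $R$ the maps $\phi_R,\rho_R$ form a Galois connection and $\phi_R\rho_R$ is an interior operator on $A^I$, so the task reduces to verifying the equality $\phi_R\rho_R=O$; here the extra hypothesis $d^I\cdot O(x)\leq O(d^I\cdot x)$ --- which is exactly the necessary condition isolated in Lemma~\ref{interclosezpet} --- will be used decisively. (As in the results it quotes, the statement tacitly assumes $\mathbf A$ complete, so that $\phi_R,\rho_R$ are defined.)

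First I set $F=\{O(x)\mid x\in A^I\}=\{x\in A^I\mid O(x)=x\}$, the two descriptions agreeing because $O$ is idempotent; note $F\neq\emptyset$ since $O(0^I)=0^I$. The fact I will use repeatedly is that for every $x\in A^I$ one has $O(x)=\bigvee\{y\in F\mid y\leq x\}$ with the join attained by $O(x)$: indeed $O(x)\in F$ and $O(x)\leq x$, while any $y\in F$ with $y\leq x$ satisfies $y=O(y)\leq O(x)$, so $O(x)$ is the greatest element of $F$ lying below $x$.

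Now I put $J=F$ and define $R\colon I\times F\longrightarrow A$ by $R(i,y)=y(i)$. Unwinding the definitions, $\phi_R(x)(y)=\bigwedge_{i\in I}(y(i)\rightarrow x(i))$ (the degree to which $y\leq x$) and $\rho_R(z)(i)=\bigvee_{y\in F}(y(i)\cdot z(y))$, hence $(\phi_R\rho_R)(x)(i)=\bigvee_{y\in F}\bigl(y(i)\cdot\bigwedge_{i'\in I}(y(i')\rightarrow x(i'))\bigr)$, and I must show this equals $O(x)(i)$. For the inequality $\geq$ I restrict the join to those $y\in F$ with $y\leq x$: then the inner meet is $1$, the $y$-term is just $y(i)$, and the join of these is $O(x)(i)$ by the fact above. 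For $\leq$ I fix an arbitrary $y\in F$, set $c=\bigwedge_{i'\in I}(y(i')\rightarrow x(i'))\in A$, and observe that the $y$-term equals $(c^I\cdot y)(i)$; from $c\leq y(i')\rightarrow x(i')$ and adjointness, $c^I\cdot y\leq x$, while from $O(y)=y$ together with the hypothesis applied with $d=c$, $c^I\cdot y=c^I\cdot O(y)\leq O(c^I\cdot y)\leq c^I\cdot y$, so $c^I\cdot y\in F$; thus $c^I\cdot y$ is a fixed point lying below $x$, whence $c^I\cdot y\leq O(x)$ by the fact above. Joining over $y\in F$ gives $(\phi_R\rho_R)(x)\leq O(x)$, and with the reverse inequality $\phi_R\rho_R=O$.

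The crux is the inequality $\leq$: the hypothesis $d^I\cdot O(x)\leq O(d^I\cdot x)$ enters precisely to guarantee that a fixed point of $O$ rescaled by a diagonal element is again a fixed point, and it cannot be dropped, since by Lemma~\ref{interclosezpet} it holds automatically for every operator of the form $\phi_R\rho_R$. Everything else is routine bookkeeping: that $R$ is a bona fide $\mathbf A$-relation into $A$, that $\phi_R$ and $\rho_R$ have the explicit forms written above, and that the columns of $R$ really must be restricted to $F$ rather than all of $A^I$ --- with all columns one would instead get $(\phi_R\rho_R)(x)\geq x$.
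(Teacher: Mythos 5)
Your proof is correct and takes essentially the same route as the paper: you choose the second index set to be the set of $O$-images (fixed points), define $R(i,y)=y(i)$, and use the hypothesis $d^I\cdot O(x)\leq O(d^I\cdot x)$ at exactly the same decisive point, namely to show that a diagonal rescaling $c^I\cdot y$ of a fixed point is again a fixed point. The only (minor) difference is in the finish: you verify $\phi_R\rho_R=O$ by a direct pointwise computation using that $O(x)$ is the greatest fixed point below $x$, whereas the paper argues via Corollary~\ref{c1} that $O$ and the operator induced by $R$ have the same system of interiors and therefore coincide.
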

\begin{proof}
Let $J=\{O(a)\mid a\in A^I\}$ be a set and $R\colon I\times J\longrightarrow A$ a fuzzy relation given by $R(i,O(a))=O(a)(i)$. Then
\begin{eqnarray*}
\rho_R(x)(i)&=& \bigvee_{O(a)\in J}(R(i,O(a))\cdot x(O(a)))\\
&=& \bigvee_{O(a)\in J}(O(a)(i)\cdot x(O(a)))\\
&=& \bigvee_{O(a)\in J}(O(a)\cdot x(O(a))^I)(i)\\
&=& \big(\bigvee_{O(a)\in J}(O(a)\cdot x(O(a))^I)\big)(i)\\
\end{eqnarray*}
and consequently we obtain $$\rho_R(x)=\bigvee_{O(a)\in J}(O(a)\cdot x(O(a))^I).$$
Then 
\begin{eqnarray*}
\rho_R(x)&\geq& O(\rho_R(x))\\
&=&O\big(\bigvee_{O(a)\in J}(O(a)\cdot x(O(a))^I)\big)\\
&\geq & \bigvee_{O(a)\in J}O((O(a)\cdot x(O(a))^I))\\
&\geq & \bigvee_{O(a)\in J}(OO(a)\cdot x(O(a))^I)\\
&= & \bigvee_{O(a)\in J}(O(a)\cdot x(O(a))^I)\\
&=&\rho_R(x)
\end{eqnarray*}
proves $O(\rho_R(x))=\rho_R(x)$ for all $x\in A^J$.

On the other side, let us have any closed set $O(x)\in J$ then defining $y\in A^J$ by
\begin{center}
$y(O(a))=\left\{
\begin{array}{ll}
1 &\mbox{ if } O(a)=O(x)\\
0 &\mbox{ if } O(a)\not =O(x),
\end{array}\right. $
\end{center}
we have
$$\rho_R(y)= \bigvee_{O(a)\in J}(O(a)\cdot y(O(a))^I) =O(x).$$ Altogether, we have proved that interiors of the operator $O$ are just elements $\rho_R(y)$ such that $y\in A^J$. With respect to Corollary \ref{c1} the operators $O$ and $\rho_R\phi_R$ have the same systems of interiors and thus $O =\rho_R\phi_R$.
\end{proof}

\begin{theorem}\label{clos}
If $\mathbf A=(A;\vee,\wedge,\cdot ,\rightarrow,0,1)$ is a residuated lattice and $I$ is an arbitrary set then any closure operator $O\colon A^I\longrightarrow A^I$ satisfying $d^I\cdot O(x)\leq O(d^I\cdot x)$ can be expressed by $O=\epsilon_R\delta_R$ where $R$ is a fuzzy relation.
\end{theorem}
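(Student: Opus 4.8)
The plan is to dualize the proof of Theorem \ref{inter}, replacing the Galois connection $(\phi_R,\rho_R)$ by the reversing Galois connection $(\delta_R,\epsilon_R)$. First I would put $J=\{O(a)\mid a\in A^I\}$, the set of $O$-closed elements, and define a fuzzy relation $R\colon I\times J\longrightarrow A$ by $R(i,O(a))=O(a)(i)$. Unwinding the definition of $\epsilon_R$ and using that infima in $\mathbf A^I$ are taken coordinatewise, one obtains the formula
$$\epsilon_R(y)=\bigwedge_{O(a)\in J}\big(y(O(a))^I\rightarrow O(a)\big)$$
for all $y\in A^J$.

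The heart of the argument is to show that the system of $O$-closed elements coincides with $\{\epsilon_R(y)\mid y\in A^J\}$. For the inclusion ``$\supseteq$'' I would first prove the auxiliary fact that $O(d^I\rightarrow z)=d^I\rightarrow z$ whenever $d\in A$ and $z\in A^I$ is $O$-closed: applying the hypothesis $d^I\cdot O(w)\leq O(d^I\cdot w)$ with $w=d^I\rightarrow z$ and then using $d^I\cdot(d^I\rightarrow z)\leq z$, monotonicity of $O$ and $O(z)=z$, one gets $d^I\cdot O(d^I\rightarrow z)\leq z$, whence $O(d^I\rightarrow z)\leq d^I\rightarrow z$ by adjointness in $\mathbf A^I$, the reverse inequality being the closure property. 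Since each $O(a)$ is $O$-closed, every term $y(O(a))^I\rightarrow O(a)$ of the above meet is $O$-closed, and an infimum of $O$-closed elements is again $O$-closed (again by monotonicity of $O$), so $\epsilon_R(y)$ is $O$-closed. For the inclusion ``$\subseteq$'', given a closed element $O(x)\in J$ I would take $y\in A^J$ with $y(O(x))=1$ and $y(O(a))=0$ for $O(a)\neq O(x)$; plugging this into the displayed formula gives $\epsilon_R(y)=O(x)$.

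Finally, by Theorem \ref{op}(ii) the mappings $\delta_R$ and $\epsilon_R$ form a reversing Galois connection, so by Corollary \ref{c2} the operator $\epsilon_R\delta_R$ is a closure operator on $\mathbf A^I$ whose closed elements are exactly the elements $\epsilon_R(y)$, $y\in A^J$. Hence $O$ and $\epsilon_R\delta_R$ have the same system of closed elements, and since a closure operator on a complete lattice is determined by its closed elements, $O=\epsilon_R\delta_R$.

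I expect the main obstacle to be the auxiliary identity $O(d^I\rightarrow z)=d^I\rightarrow z$ for $O$-closed $z$ --- that is, routing the growth condition $d^I\cdot O(x)\leq O(d^I\cdot x)$ through adjointness in the correct direction; once this is in place, the remaining steps are a routine transcription of the proof of Theorem \ref{inter}, with $\cdot$ replaced by $\rightarrow$ and suprema by infima. (In reading off $\epsilon_R(y)$ coordinatewise one should keep in mind that the index in the definition of $\epsilon_R$ ranges over $J$.)
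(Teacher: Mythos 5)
Your proposal is correct and follows essentially the same route as the paper: the same set $J$ of $O$-closed elements, the same relation $R(i,O(a))=O(a)(i)$, the same coordinatewise formula for $\epsilon_R$, the same indicator element $y$ recovering each $O(x)$, and the same conclusion via Corollary \ref{c2} and the fact that a closure operator is determined by its closed elements. Your auxiliary identity $O(d^I\rightarrow z)=d^I\rightarrow z$ for $O$-closed $z$ (together with closedness of infima of closed elements) is just a repackaging of the paper's inequality chain $\epsilon_R(x)\leq O(\epsilon_R(x))\leq\bigwedge_{O(a)\in J}\bigl(x(O(a))^I\rightarrow OO(a)\bigr)=\epsilon_R(x)$, and it is proved by exactly the intended use of the growth condition and adjointness.
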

\begin{proof}
Let $J=\{O(a)\mid a\in A^I\}$ be a set and $R\colon I\times J\longrightarrow A$ be a a fuzzy relation given by $R(i,O(a))=O(a)(i)$. Then
\begin{eqnarray*}
\epsilon_R(x)(i)&=& \bigwedge_{O(a)\in J}(x(O(a))\rightarrow R(i,O(a)))\\
&=& \bigwedge_{O(a)\in J}(x(O(a)) \rightarrow O(a)(i))\\
&=& \bigwedge_{O(a)\in J}(x(O(a))^I\rightarrow O(a))(i)\\
&=& \big(\bigwedge_{O(a)\in J}(x(O(a))^I\rightarrow O(a))\big)(i)\\
\end{eqnarray*}
and, consequently, $$\epsilon_R(x)=\bigwedge_{O(a)\in J}(x(O(a))^I\rightarrow O(a)).$$
Then 
\begin{eqnarray*}
\epsilon_R(x)&\leq& O(\epsilon_R(x))\\
&=&O\big(\bigwedge_{O(a)\in J}(x(O(a))^I\rightarrow O(a))\big)\\
&\leq & \bigwedge_{O(a)\in J}O(x(O(a))^I\rightarrow O(a))\\
&\leq & \bigwedge_{O(a)\in J}(x(O(a))^I \rightarrow OO(a))\\
&=& \bigwedge_{O(a)\in J}(x(O(a))^I \rightarrow O(a))\\
&=&\epsilon_R(x)
\end{eqnarray*}
proves $O\rho_R(x)=\rho_R(x)$ for all $x\in A^J$.

On the other side, let us have any closed set $O(x)\in J$ then defining $y\in A^J$ by
\begin{center}
$y(O(a))=\left\{
\begin{array}{ll}
1 &\mbox{ if } O(a)=O(x)\\
0 &\mbox{ if } O(a)\not =O(x).
\end{array}\right. $
\end{center}
and then
$$\epsilon_R(y)= \bigwedge_{O(a)\in J}(y(O(a))^I\rightarrow O(a)) =O(x)$$ holds. Altogether, we have proved that interiors of the operator $O$ are just all $\epsilon(y)$ such that $y\in A^J$. With respect to Corollary \ref{c1} the operators $O$ and $\epsilon_R\delta_R$ have same systems of interiors and thus $O =\epsilon_R\delta_R$.
\end{proof}

\section{Application of the representation theorems in Pavelka's logic}

The Pavelka's logic enriches the \L ukasiewicz's multi-valued logic about an infinite systems of constants belonging into a set $[0,1]\cap \mathbb Q$ (see \cite{Pav}). These  constants form a dense subalgebra of the \emph{standard MV-algebra} defined as $([0,1];\oplus,\neg,0)$ where
$$x\oplus y=\min\{x+y,1\},$$ $$\neg x= 1-x$$
for all $x,y\in[0,1]$. We are going to define the algebraic model of Pavelka's logic.
\begin{defin}  
By a \emph{Pavelka's algebra} we mean an algebra $\mathbf A=(A;\oplus,\neg,\{\mathbf r\mid r\in [0,1]\cap \mathbb Q\})$ satisfying
\begin{itemize}
\item[(P1)] the reduct $(A;\oplus,\neg, \mathbf 0)$ is an MV-algebra,
\item[(P2)] if $r,s,t\in [0,1]\cap\mathbb Q$ are such that $r\oplus s = t$ (computed in $[0,1]\cap \mathbb Q$) then $\mathbf r \oplus \mathbf s = \mathbf t$ (computed in $\mathbf A$),
\item[(P3)] if $r,s\in [0,1]\cap \mathbb Q$ are such that $\neg r =s$ (computed in $[0,1]\cap \mathbb Q$) then $\neg\mathbf r =\mathbf s$ (computed in $\mathbf A$).
\end{itemize}
We note that an element of Pavelka's algebra will be typeset in bold if and only if it is a constant.
\end{defin}

Let $\mathbf A=(A;\oplus,\neg,\{\mathbf r\mid r\in [0,1]\cap \mathbb Q\})$ be a Pavelka's algebra. By a \emph{filter} of $\mathbf A$ we mean a filter of the MV-reduct $(A;\oplus,\neg,\mathbf 0)$. If $F$ is a proper filter then the only constant in $F$ is $\mathbf 1$ (see \cite{Geo}). Thus there is a one-to-one correspondence between filters on a Pavelka's algebra and its congruences. An MV-algebra is \emph{semisimple} if it is a subdirect product of simple MV-algebras (having only trivial proper filter). It is well known that semisimple MV-algebras are isomorphic to subalgebras of powers of the standard MV-algebra.

We say that a filter is a \emph{maximal} if it is maximal proper filter. Since any simple MV-algebra, and thus also any simple Pavelka's algebra,  is uniquely embeddable into the standard MV-algebra (= standard Pavelka's algebra)  (see \cite{Mun}), we will identify  in this embedding the elements of (any) simple MV-algebra with theirs images. Especially, if $F$ is a maximal filter on a Pavelka's algebra $\mathbf A$ then $\mathbf A/F$ is a simple Pavelka's MV-algebra and thus an algebra $\mathbf A/F$ will be considered as a subalgebra of the standard MV-algebra ($\mathbf A/F\subseteq [0,1]$). 

With respect to the previous paragraph, we denote the a set of all maximal filters of a Pavelka's algebra $\mathbf A$ by $\s (\mathbf A)$. Observe that any Pavelka's algebra can be embedded into $[0,1]^{\s\mathbf A}$ by
$$x(F)=x/F$$ for all $x\in A$ and $F\in\s \mathbf A$.  We call this embedding $n_\mathbf A\colon \mathbf A\longrightarrow [0,1]^{\s \mathbf A}$ the \emph{natural embedding} .

\begin{defin}
Let $\mathbf A=(A;\oplus,\neg,\{\mathbf r\mid r\in [0,1]\cap \mathbb Q\})$ and $\mathbf B=(B;\oplus,\neg,\{\mathbf r\mid r\in [0,1]\cap \mathbb Q\})$ be Pavelka's algebras. Let $f\colon A\longrightarrow B$ and $g\colon B\longrightarrow A$ be mappings forming a Galois connection between $A$ and $B$.We say that $f$ is a \emph{strong left adjoint of $g$}, and we denote it by $f\leftrightarrows g$, if one of the following equivalent conditions holds 
\begin{itemize}
\item[(f)] $\mathbf r\rightarrow f(x)=f(\mathbf r\rightarrow x)$ for all $x,\mathbf r\in A$,
\item[(g)] $\mathbf r\cdot g(x)=g(\mathbf r\cdot x)$ for all $x,\mathbf r\in A$
\end{itemize}

Let $d\colon A\longrightarrow g$ and $h\colon B\longrightarrow A$ be mappings forming a reversing Galois connection between $A$ and $B$. We say that $d$ is \emph{strong reversing left adjoint of $g$}, and we denote it by $f\rightleftharpoons g$, if one of the following equivalent conditions holds.
\begin{itemize}
\item[(d)] $\mathbf r\rightarrow d(x)=d(\mathbf r\cdot x)$ for all $x,\mathbf r\in A$,
\item[(h)] $\mathbf r\rightarrow h(x)=h(\mathbf r\cdot x)$ for all $x,\mathbf r\in A$
\end{itemize}
\end{defin}

The equivalence of the conditions (f) and (g) (or (d) and (h)) is an easy analogy of Corollary~\ref{cor3}. The following theorems are direct analogies of Theorems \ref{op} and \ref{hlavni}. 

\begin{theorem}\label{pt1}
i) If $R\colon I\times J\longrightarrow [0,1]$ is a fuzzy relation then mappings $f_R\colon [0,1]^I\longrightarrow [0,1]^J$ and $g_R\colon [0,1]^J\longrightarrow [0,1]^I$ defined by
$$f_R(x)(j)=\bigwedge_{i\in I} (R(i,j)\rightarrow x(i))\mbox{ for all } x\in A, j\in J, \eqno{(f_R)}$$
$$g_R(x)(i)=\bigvee_{j\in J} (R(i,j)\cdot x(j))\mbox{ for all } x\in B, i\in I, \eqno{(g_R)}$$
satisfy $f_R\leftrightarrows g_R$.

ii) Let us have semisimple Pavelka's algebras $\mathbf A=(A;\oplus,\neg,\{\mathbf r\mid r\in [0,1]\cap \mathbb Q\})$ and $\mathbf B=(B;\oplus,\neg,\{\mathbf r\mid r\in [0,1]\cap \mathbb Q\})$ and mappings $f\colon A\longrightarrow B$ and $g\colon B\longrightarrow A$ satisfying $f\leftrightarrows g$. Then there exists a fuzzy relation
$$R\colon \s \mathbf A\times \s\mathbf B\longrightarrow [0,1]$$ 
such that the diagram
$$
\bfig
\square/>`{^{(}->}`{^{(}->}`<-/<1000,500>[A`B`\lbrack0,1\rbrack^{\s\mathbf A}`\lbrack0,1 \rbrack^{\s\mathbf B};f`n_{\mathbf A}`n_{\mathbf B}` g_R]
\morphism(0,500)|b|/@{<-}@<-5pt>/<1000,0>[A`B;g]
\morphism(0,0)|a|/@{>}@<5pt>/<1000,0>[\lbrack 0,1\rbrack^{\s \mathbf A}`\lbrack 0,1\rbrack^{\s \mathbf B};f_R]
\Loop(0,500)A(ur,ul)_{gf}
\Loop(1000,500)B(ur,ul)_{fg}
\Loop(0,0)\lbrack 0,1\rbrack^{\s \mathbf A}(dl,dr)_{g_Rf_R}
\Loop(1000,0)\lbrack 0,1\rbrack^{\s \mathbf B}(dl,dr)_{f_Rg_R}
\efig$$
commutes.
\end{theorem}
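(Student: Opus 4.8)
The plan is to obtain part (i) from Theorem~\ref{op} and to prove part (ii) by imitating the construction of the relation in Theorem~\ref{hlavni}, using density of the rational constants to make up for the fact that the algebras need not be complete.

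\emph{Part (i).} The residuated‑lattice reduct of the standard MV‑algebra is a complete residuated lattice, and for a fuzzy relation $R\colon I\times J\longrightarrow[0,1]$ the maps $f_R,g_R$ of $(f_R),(g_R)$ are literally the maps $\phi_R,\rho_R$ of Theorem~\ref{op}. Hence $f_R$ and $g_R$ form a Galois connection between $[0,1]^I$ and $[0,1]^J$, $f_R$ is a $\phi$‑type mapping and $g_R$ a $\rho$‑type mapping. Since the constant $\mathbf r$ of the Pavelka's algebra $[0,1]^I$ is the diagonal element $r^I$, condition (f), i.e.\ $\mathbf r\rightarrow f_R(x)=f_R(\mathbf r\rightarrow x)$, is exactly the instance $d=r$ of the defining identity $d^J\rightarrow\phi_R(x)=\phi_R(d^I\rightarrow x)$ of a $\phi$‑type mapping; thus $f_R\leftrightarrows g_R$, the form (g) being equivalent to it as remarked after the definition (an instance of Corollary~\ref{cor3}).

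\emph{Part (ii), set‑up.} Via the natural embeddings I identify $\mathbf A$ with a subalgebra of $[0,1]^{\s\mathbf A}$ and $\mathbf B$ with a subalgebra of $[0,1]^{\s\mathbf B}$; these are embeddings of Pavelka's algebras, so $n_{\mathbf A}(\mathbf r)=r^{\s\mathbf A}$, and the power algebras are complete residuated lattices with meets and joins computed coordinatewise. Writing $a(F)=a/F$, I define
$$R(F,G)=\bigwedge_{a\in A}\bigl(f(a)(G)\rightarrow a(F)\bigr)\qquad (F\in\s\mathbf A,\ G\in\s\mathbf B),$$
imitating $(R_\phi)$. By part (i) the induced $f_R,g_R$ form a Galois connection with $f_R\leftrightarrows g_R$, so it only remains to make the diagram commute; as the four loops are composites of the edges, it suffices to prove
$$f_R(n_{\mathbf A}(x))=n_{\mathbf B}(f(x))\ \ (x\in A)\qquad\text{and}\qquad g_R(n_{\mathbf B}(y))=n_{\mathbf A}(g(y))\ \ (y\in B).$$
The definition of $R$ gives $R(F,G)\cdot f(a)(G)\le a(F)$ for all $a,F,G$, whence $n_{\mathbf B}(f(x))\le f_R(n_{\mathbf A}(x))$ immediately, and, using $b\le f(g(b))$, also $g_R(n_{\mathbf B}(y))\le n_{\mathbf A}(g(y))$.

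\emph{Part (ii), the reverse inequality.} The crux, and the step I expect to be the main obstacle, is $f_R(n_{\mathbf A}(x))\le n_{\mathbf B}(f(x))$; the companion inequality $n_{\mathbf A}(g(y))\le g_R(n_{\mathbf B}(y))$ will then follow by combining it with the two Galois connections and the (g)‑analogue of the argument below. In Theorem~\ref{hlavni} the corresponding step relied on Claim~\ref{cl2}: the filter $\{x\in A^I\mid\phi(x)(j)=1\}$ is closed under all infima, hence has a least element, which is the $j$‑th column of $R$. Here the analogous set $M_G=\{a\in A\mid f(a)\in G\}$ is still a lattice filter of $\mathbf A$ (because $f$, being a right adjoint, preserves finite meets), but $\mathbf A$ need not be complete, so $M_G$ may have no least element and the argument does not transfer literally. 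The substitute uses two features of semisimple Pavelka's algebras. First, by (f) one has $f(\mathbf s\rightarrow a)=\mathbf s\rightarrow f(a)$, so $\mathbf s\rightarrow a\in M_G$ for every $s\in\mathbb Q\cap[0,1]$ with $s\le f(a)(G)$; letting such $s$ increase to $f(a)(G)$ and using continuity of the \L ukasiewicz operations one gets $R(F,G)=\bigwedge\{a(F)\mid a\in M_G\}$, i.e.\ $R(\cdot,G)=m_G:=\bigwedge\{n_{\mathbf A}(a)\mid a\in M_G\}$ in $[0,1]^{\s\mathbf A}$, and then, from $g(f(a))\le a$ and $b\le f(g(b))$, that in fact $m_G=\bigwedge_{b\in G}n_{\mathbf A}(g(b))$. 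Second, for $x\in A$ put $h=f_R(n_{\mathbf A}(x))(G)$; for every rational $r\le h$ one has $m_G\le n_{\mathbf A}(\mathbf r\rightarrow x)$, and one must deduce $r\le f(x)(G)$ (equivalently $\mathbf r\rightarrow x\in M_G$), after which $h=\bigvee\{r\in\mathbb Q\cap[0,1]\mid r\le h\}\le f(x)(G)$ finishes the proof. Via (g) this last deduction reduces to showing that $\bigwedge_{b\in G}n_{\mathbf A}(g(b))\le n_{\mathbf A}(\mathbf r\rightarrow x)$ already forces $n_{\mathbf A}(g(b))\le n_{\mathbf A}(\mathbf r\rightarrow x)$ for some single $b\in G$; this cannot come from the inequality alone, and the delicate point is to derive it from semisimplicity — the separation of distinct maximal filters (for $G'\ne G$ and any $c\in G\setminus G'$ the powers $c^{\,n}$ lie in $G$ while $c^{\,n}/G'\to 0$) makes $m_G$ concentrated at $G$, so that the directed infimum defining it is effectively attained. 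Granting this, both square identities hold and, the loops being composites of the edges, the whole diagram commutes.
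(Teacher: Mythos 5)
Your part (i) and the set-up of part (ii) coincide with the paper's own proof: the same relation $R(F,G)=\bigwedge_{a\in A}(f(a)/G\rightarrow a/F)$, the same easy inequality $n_{\mathbf B}(f(x))\le f_R(n_{\mathbf A}(x))$, and the same density-of-rational-constants computation giving $R(F,G)=\bigwedge M(F,G)$ for $M(F,G)=\{a/F\mid f(a)/G=1\}$ (the paper's Claim \ref{cl5}). The genuine gap is exactly at the point you flag and then skip: you never prove that $\mathbf r\le f_R(n_{\mathbf A}(x))(G)$ forces $\mathbf r\rightarrow x\in M_G$, i.e.\ $\mathbf r\le f(x)/G$. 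You reduce this to the assertion that the infimum $\bigwedge_{b\in G}n_{\mathbf A}(g(b))$ is ``effectively attained'' at a single $b\in G$ and then write ``Granting this''; the mechanism you gesture at (separation of distinct maximal filters via powers $c^n$ with $c^n/G'\to 0$) is not developed into an argument, and, as you yourself observe, the inequality $\bigwedge_{b\in G}n_{\mathbf A}(g(b))\le n_{\mathbf A}(\mathbf r\rightarrow x)$ alone does not yield membership in $M_G$. Consequently the central inequality $f_R\circ n_{\mathbf A}\le n_{\mathbf B}\circ f$ is not established, and the commutativity of the $g$-square, which you defer to ``the (g)-analogue of the argument below'', inherits the same gap.

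For comparison, the paper closes this step by asserting (as an analogue of Claim \ref{cl2} of Theorem \ref{hlavni}) that $M_G=\{a\in A\mid f(a)/G=1\}$ has a least element $m_G$, whence $R(F,G)=m_G/F$ for all $F$; then $\mathbf r\le f_R(x)/G$ gives $m_G/F\le(\mathbf r\rightarrow x)/F$ for every $F\in\s\mathbf A$, semisimplicity makes the natural embedding order-reflecting, so $m_G\le\mathbf r\rightarrow x$, and upward closedness of $M_G$ gives $f(\mathbf r\rightarrow x)/G=1$, i.e.\ $\mathbf r\le f(x)/G$; density of the constants then yields $f_R(x)/G\le f(x)/G$. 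Your ``attained at a single $b\in G$'' is precisely this least-element property in disguise (the elements $g(b)$, $b\in G$, are coinitial in $M_G$), so what your proposal lacks is exactly an argument for the existence of $m_G$ in the possibly incomplete algebra $\mathbf A$ (the point where Claim \ref{cl2} does not transfer literally); supplying that argument would complete your proof along the same lines as the paper's.
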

\begin{proof}
The part i) was proved in Theorem \ref{op} since the constants are, clearly, diagonal elements. Converse part of the proof is similar. For given $f\leftrightarrows g$ we define binary relation $$R(F,G)=\bigwedge_{a\in A}(f(a)/G\rightarrow a/F).$$  Then $R(F,G)\leq f(x)/G\rightarrow x/F$ for all $x\in A$ yields $$f_R(x)/G =\bigwedge_{F\in\s\mathbf A}(R(F,G)\rightarrow x/F)\geq f(x)/G$$ for all $x\in A$ and $G\in \s\mathbf B$. Hence $f_R(x)\geq f (x)$.

Let us define the set
$$M(F,G)=\{x/F\mid f(x)/G=1\}.$$
We can state  the following claim. 

\begin{cl}\label{cl5}
$R (F,G)=\bigwedge M(F,G)$.
\end{cl}
\begin{proof}
If $x/F\in M(F,G)$ then $f(x)/G=1$ and consequently $R (i,j)\leq f(x)/G\rightarrow x/F=x/F$. Consequently, $R (F,G)\leq \bigwedge M(F,G)$ holds.
 
If $\mathbf r\leq f(x)/G$ then $1=\mathbf r\rightarrow f(x)/G=f(\mathbf r\rightarrow x)/G$ and thus $\mathbf r\rightarrow x/F=(\mathbf r\rightarrow x)/F\in M(F,G)$. Consequently we obtain $$\bigwedge M(F,G)\leq \bigwedge_{\mathbf r\leq f(x)/G}(\mathbf r\rightarrow x/F)=\big(\bigvee_{\mathbf r\leq f(x)/G}\mathbf r\big)\rightarrow x/F=f(x)/G\rightarrow x/F$$
for all $x\in A$. Hence $\bigwedge M(F,G)\leq \bigwedge_{x\in A}(f(x)/G\rightarrow x/G)=R(F,G)$. 
\end{proof}

Analogously to Claim \ref{c2}, we can prove that the set $M_G=\{x\in A\mid f(x)/G=1\}$ has the least element $m_G$ for all $G\in \s\mathbf B$. Consequently, $m_G/F$ is the least element of $M(F,G)$ and thus $m_G/F=R(F,G)$.

Let $x\in A$; using Claim \ref{cl5} we obtain
$f_R(x)/G=\bigwedge_{F\in \s \mathbf A} (m_G/F\rightarrow x/F)$ and thus for all $\mathbf r\leq f_R(x)/G$ we obtain $m_G/F\leq \mathbf r\rightarrow x/F=(\mathbf r\rightarrow x)/F$ for any $F\in\s \mathbf A$. Therefore, thus $m\leq \mathbf r\rightarrow x$ and $\mathbf r\rightarrow x\in M_G$. Using the definition of the set $M_G$ we have proved that $$1=f(\mathbf r\rightarrow x)/F=\mathbf r \rightarrow f(x)/F$$ and consequently $\mathbf r\leq f(x)/G$.

The inequality $\mathbf r\leq f_R(x)/G$ implies $\mathbf r\leq f(x)/G$ which gives $f_R(x)/G\leq f(x)/G$ for all $x\in A$ and $G\in\s \mathbf B$. Therefore, $f=f_R$. If $f$ and $g$ form a Galois connection then due to Theorems \ref{rep} and \ref{op} we obtain $g=g_R$.
\end{proof}

\begin{theorem}\label{pt2}
i) If $R\colon I\times J\longrightarrow [0,1]$ is a fuzzy relation then mappings $d_R\colon [0,1]^I\longrightarrow [0,1]^J$ and $h_R\colon [0,1]^J\longrightarrow [0,1]^I$ defined by
$$d_R(x)(j)=\bigwedge_{i\in I} (x(i)\rightarrow R(i,j))\mbox{ for all } x\in A, j\in J, \eqno{(d_R)}$$
$$h_R(x)(i)=\bigwedge_{j\in J} (x(j)\rightarrow R(i,j))\mbox{ for all } x\in B, i\in I, \eqno{(h_R)}$$
satisfy $d_R\rightleftharpoons h_R$.

ii) Let us have semisimple Pavelka's algebras $\mathbf A=(A;\oplus,\neg,\{\mathbf r\mid r\in [0,1]\cap \mathbb Q\})$ and $\mathbf B=(B;\oplus,\neg,\{\mathbf r\mid r\in [0,1]\cap \mathbb Q\})$ and mappings $d\colon A\longrightarrow B$ and $h\colon B\longrightarrow A$ satisfying $d\rightleftharpoons h$. Then there exists a fuzzy relation
$$R\colon \s \mathbf A\times \s\mathbf B\longrightarrow [0,1]$$ 
such that the diagram
$$
\bfig
\square/>`{^{(}->}`{^{(}->}`<-/<1000,500>[A`B`\lbrack 0,1\rbrack^{\s\mathbf A}`\lbrack 0,1\rbrack^{\s\mathbf B};d`n_{\mathbf A}`n_{\mathbf B}` h_R]
\morphism(0,500)|b|/@{<-}@<-5pt>/<1000,0>[A`B;h]
\morphism(0,0)|a|/@{>}@<5pt>/<1000,0>[\lbrack0,1\rbrack ^{\s \mathbf A}`\lbrack 0,1\rbrack^{\s \mathbf B};d_R]
\Loop(0,500)A(ur,ul)_{hd}
\Loop(1000,500)B(ur,ul)_{dh}
\Loop(0,0)\lbrack 0,1\rbrack^{\s \mathbf A}(dl,dr)_{h_Rd_R}
\Loop(1000,0)\lbrack 0,1\rbrack^{\s \mathbf B}(dl,dr)_{d_Rh_R}
\efig$$
commutes.
\end{theorem}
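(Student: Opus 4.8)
The plan is to read part~i) off Theorem~\ref{op} and to obtain part~ii) by copying the converse direction of Theorem~\ref{pt1} (equivalently of Theorem~\ref{hlavni}\,ii)); the only genuinely new feature is that $\mathbf A$ need not be a complete lattice, so the suprema that occur have to be formed in the completion $[0,1]^{\s\mathbf A}$ and then pulled back by means of the density of the constant subalgebra. For part~i) it suffices to observe that the formulas $(d_R)$ and $(h_R)$ are literally $(\delta_R)$ and $(\epsilon_R)$ written for the residuated lattice $[0,1]$; hence Theorem~\ref{op}\,ii) gives directly that $d_R$ and $h_R$ form a reversing Galois connection and that both are $\delta$-type. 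A $\delta$-type mapping satisfies $e^J\rightarrow d_R(x)=d_R(e^I\cdot x)$ for every $e\in[0,1]$, and every rational constant $\mathbf r$ is such a diagonal element, so this specialises to $\mathbf r\rightarrow d_R(x)=d_R(\mathbf r\cdot x)$, i.e.\ to condition~(d); thus $d_R\rightleftharpoons h_R$.

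For part~ii), identify $\mathbf A$ and $\mathbf B$ with their images under the natural embeddings, so $a\in A$ is the family $(a/F)_{F\in\s\mathbf A}$ with $a/F\in\mathbf A/F\subseteq[0,1]$, and likewise for $B$. In analogy with $(R_\delta)$ I would put
$$R(F,G)=\bigvee_{a\in A}\big(d(a)/G\cdot a/F\big),\qquad F\in\s\mathbf A,\ G\in\s\mathbf B.$$
Since $R(F,G)\ge d(a)/G\cdot a/F$ for every $a$, we get at once $d_R(x)/G=\bigwedge_F\big(x/F\rightarrow R(F,G)\big)\ge\bigwedge_F\big(x/F\rightarrow(d(x)/G\cdot x/F)\big)\ge d(x)/G$, i.e.\ $d\le d_R$ under the embeddings. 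For the reverse inequality I would first prove the analogue of Claim~\ref{c3}: with $M(F,G)=\{x/F\mid x\in A,\ d(x)/G=1\}$ one has $R(F,G)=\bigvee M(F,G)$. Here $\ge$ is immediate, while for $\le$ one fixes $a\in A$ and, for each rational constant $\mathbf r\le d(a)/G$, uses condition~(d) to obtain $d(\mathbf r\cdot a)/G=(\mathbf r\rightarrow d(a))/G=\mathbf r\rightarrow(d(a)/G)=1$, so that $\mathbf r\cdot(a/F)\in M(F,G)$; letting $\mathbf r$ run over the rationals below $d(a)/G$ and using the density of the constant subalgebra together with Lemma~\ref{rez}\,i) yields $\bigvee M(F,G)\ge d(a)/G\cdot a/F$, hence $\bigvee M(F,G)\ge R(F,G)$.

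Next one considers $M_G=\{x\in A\mid d(x)/G=1\}$, which is a lattice ideal of the reduct of $\mathbf A$; the key step (the analogue of Claim~\ref{cl4}) is that the supremum $R(\cdot,G)=\bigvee_{x\in M_G}x$, taken in the complete lattice $[0,1]^{\s\mathbf A}$, still has the property that $z\le R(\cdot,G)$ forces $z\in M_G$ for all $z\in A$. Granting this, $d_R\le d$ follows as in Theorem~\ref{pt1}: if $\mathbf r$ is a rational constant with $\mathbf r\le d_R(x)/G$ then $\mathbf r\cdot x/F\le R(F,G)$ for every $F$, hence $\mathbf r\cdot x\in M_G$, whence $1=d(\mathbf r\cdot x)/G=(\mathbf r\rightarrow d(x))/G=\mathbf r\rightarrow(d(x)/G)$, i.e.\ $\mathbf r\le d(x)/G$; density of the constants then gives $d_R(x)/G\le d(x)/G$. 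Thus $d=d_R$ under the embeddings, and since the partner of a reversing Galois connection is uniquely determined (Theorems~\ref{rep} and~\ref{op}) also $h=h_R$. The commutativity of the displayed square is then nothing but $d=d_R$ and $h=h_R$ read through $n_{\mathbf A}$ and $n_{\mathbf B}$, the outer loops simply recording the induced closure operators $hd=h_Rd_R$ on $A$ and $dh=d_Rh_R$ on $B$.

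The step I expect to be the main obstacle is precisely the claim about $M_G$ just used. In Theorem~\ref{hlavni} the completeness of the ambient power $A^I$ supplied $M_G$ with a greatest element and kept every supremum inside the algebra; here $\mathbf A$ is only a semisimple Pavelka algebra, and the whole force of that hypothesis --- above all the density of the constants $\{\mathbf r\mid r\in[0,1]\cap\mathbb Q\}$ in the standard MV-algebra --- must be deployed to push that step through.
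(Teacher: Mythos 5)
Your part i) and the first half of part ii) coincide with the paper's proof: the same relation $R(F,G)=\bigvee_{a\in A}(d(a)/G\cdot a/F)$, the same immediate inequality $d\le d_R$, and the same density-of-constants argument (the analogue of Claim \ref{cl6}) showing $R(F,G)=\bigvee M(F,G)$. The problem is the step you yourself flag and then only assume: that the element $R(\cdot,G)\in[0,1]^{\s\mathbf A}$, i.e.\ the pointwise supremum of the images of $M_G=\{x\in A\mid d(x)/G=1\}$, has the property that $n_{\mathbf A}(z)\le R(\cdot,G)$ forces $z\in M_G$. Observe that $n_{\mathbf A}(z)\le R(\cdot,G)$ is exactly the statement $d_R(z)/G=1$, so the property you are ``granting'' is precisely the crisp instance $d_R(z)/G=1\Rightarrow d(z)/G=1$ of the inequality $d_R\le d$ that remains to be proved; the general case then follows from this instance by condition (d) and density of the constants, as you note. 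Hence what is left open is not a technical lemma but the essential content of part ii), and as submitted the proposal is not a proof.

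For comparison, the paper closes this step differently: it asserts (``analogously to Claim \ref{cl4}'') that $M_G$ has a greatest element $m_G\in A$. Then $m_G/F$ is the greatest element of $M(F,G)$, so $R(F,G)=m_G/F$; from $\mathbf r\cdot x/F\le m_G/F$ for all $F\in\s\mathbf A$ semisimplicity gives $\mathbf r\cdot x\le m_G$ in $A$ (the natural embedding reflects order), and since $d$ is antitone the set $M_G$ is downward closed, whence $\mathbf r\cdot x\in M_G$ and $\mathbf r\le d(x)/G$ exactly as in your sketch. So the single missing ingredient in your write-up is the existence of this greatest element $m_G$ (or a workable substitute for it). Your instinct that this is where the non-completeness of $\mathbf A$ bites is sound --- the paper's own justification is only an appeal to the analogy with Claim \ref{cl4}, whose proof used completeness of the domain, so the burden of supplying a genuine argument here is real --- but your proposal neither reproduces the paper's $m_G$ step nor replaces it by any other argument, so the gap is genuine.
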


\begin{proof}
The part i) was proved in the Theorem \ref{op} since constants are diagonal elements. The converse part of the proof is similar. For given $d\rightleftharpoons h$ we define a binary relation $$R(F,G)=\bigvee_{a\in A}(d(a)/G\cdot a/F).$$  Then the inequality $R(F,G)\geq d(a)/G\cdot a/F$ holds for all $a\in A$ and this yields $d_R(x)/G \geq d(x)/G$ for all $G\in \s\mathbf B$. Hence $d_R(x)\geq d (x)$.

Let us define the set
$$M(F,G)=\{x/F\mid d(x)/G=1\}.$$
Then the following claim we can state 

\begin{cl}\label{cl6}
$R (F,G)=\bigvee M(F,G)$.
\end{cl}
\begin{proof}
If $x/F\in M(F,G)$ then $d(x)/G=1$ and consequently $R (i,j)\geq d(x)/G\cdot x/F=x/F$. Consequently, $R (F,G)\geq \bigwedge M(F,G)$.
 
If $\mathbf r\leq d(x)/G$ then $1=\mathbf r\rightarrow f(x)/G=d(\mathbf r\cdot x)/G$ and thus $\mathbf r\cdot x/F=(\mathbf r\cdot x)/F\in M(F,G)$. Consequently we obtain $$\bigvee M(F,G)\geq \bigvee_{\mathbf r\leq f(x)/G}(\mathbf r\cdot x/F)=\big(\bigvee_{\mathbf r\leq f(x)/G}\mathbf r\big)\cdot x/F=f(x)/G\cdot x/F$$
for all $x\in A$. Hence $\bigvee M(F,G)\leq \bigvee_{x\in A}(f(x)/G\rightarrow x/G)=R(F,G)$ holds. 
\end{proof}

Analogously to Claim \ref{c2} we can prove that the set $M_G=\{x\in A\mid d(x)/G=1\}$ has the greatest element $m_G$ for all $G\in \s\mathbf B$. Consequently, $m_G/F$ is the greatest element of $M(F,G)$ and thus $m_G/F=R(F,G)$.

Let $x\in A$; using Claim \ref{cl5}, we obtain
$d_R(x)/G=\bigwedge_{F\in \s \mathbf A} (x/F\rightarrow m_G/F)$ and thus for all $\mathbf r\leq d_R(x)/G$ we have $(\mathbf r\cdot x)/F=\mathbf r\cdot x/F\leq m_G/F$ for any $F\in\s \mathbf A$. Therefore $\mathbf r\cdot x\leq m$ and $\mathbf r\cdot x\in M_G$ hold. Using the definition of the set $M_G$ we have proved that $$1=d(\mathbf r\cdot x)/F=\mathbf r \rightarrow d(x)/F$$ and consequently $\mathbf r\leq d(x)/G$.

The inequality $\mathbf r\leq d_R(x)/G$ implies $\mathbf r\leq d(x)/G$ which gives $d_R(x)/G\leq d(x)/G$ for all $x\in A$ and $G\in\s \mathbf B$. Therefore, $d=d_R$. If $d$ and $h$ form a reversing Galois connection then due to Theorems \ref{rep} and \ref{op} we obtain $h=h_R$.
\end{proof}

The followings theorems state simple but important facts satisfied by Pavelka's algebras (or semisimple Pavelka's algebras).

\begin{theorem}\label{otocenesipky}
Let $\mathbf A=(A;\oplus,\neg,\{\mathbf r\mid r\in [0,1]\cap \mathbb Q\})$ and $\mathbf B=(B;\oplus,\neg,\{\mathbf r\mid r\in [0,1]\cap \mathbb Q\})$ be a Pavelka's algebras and let us have mappings $f\colon A\longrightarrow B$ and $g\colon B\longrightarrow A$. Then  $f\leftrightarrows g$ if and only if $(\neg g\neg)\leftrightarrows (\neg f \neg)$. 
\end{theorem}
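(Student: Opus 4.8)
The plan is to prove only the forward implication, that $f\leftrightarrows g$ implies $(\neg g\neg)\leftrightarrows(\neg f\neg)$, and then to get the converse for free: since $\neg\neg x=x$ holds in every MV-reduct, the assignment $(f,g)\mapsto(\neg g\neg,\neg f\neg)$ is an involution, so applying the forward implication to the pair $(\neg g\neg,\neg f\neg)$ returns $(f,g)$ together with the property $f\leftrightarrows g$. Throughout I would use, on the MV-reduct of a Pavelka's algebra, that $\neg$ is an order-reversing involution, the identity $\neg(\mathbf r\rightarrow x)=\mathbf r\cdot\neg x$ (equivalently $\mathbf r\rightarrow x=\neg(\mathbf r\cdot\neg x)$), and — exactly as in Theorems \ref{pt1} and \ref{pt2} — the harmless identification of the constant $\mathbf r$ of $\mathbf A$ with the constant $\mathbf r$ of $\mathbf B$.

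So assume $f\leftrightarrows g$. First I would record that $\neg g\neg\colon B\longrightarrow A$ and $\neg f\neg\colon A\longrightarrow B$ are monotone, each being a composite of a monotone map with two order-reversing maps $\neg$, and that they form a Galois connection: for $x\in A$ and $y\in B$, using the involution on $A$, then the adjointness of $f$ and $g$, then the involution on $B$,
$$x\leq\neg g(\neg y)\ \iff\ g(\neg y)\leq\neg x\ \iff\ \neg y\leq f(\neg x)\ \iff\ \neg f(\neg x)\leq y,$$
which is precisely the adjointness of $\neg g\neg$ and $\neg f\neg$; this is the one place where it matters that $f$ and $g$ are adjoint on the nose, and everything else is bookkeeping. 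Next I would verify condition (f) for the new pair, i.e. $\mathbf r\rightarrow(\neg g\neg)(x)=(\neg g\neg)(\mathbf r\rightarrow x)$ for all $x\in B$ and all constants $\mathbf r$. From the right-hand side, $(\neg g\neg)(\mathbf r\rightarrow x)=\neg g(\neg(\mathbf r\rightarrow x))=\neg g(\mathbf r\cdot\neg x)$, and condition (g) for $(f,g)$ turns this into $\neg(\mathbf r\cdot g(\neg x))$; from the left-hand side, $\mathbf r\rightarrow\neg g(\neg x)=\neg(\mathbf r\cdot\neg\neg g(\neg x))=\neg(\mathbf r\cdot g(\neg x))$. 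The two agree, so condition (f) holds for $(\neg g\neg,\neg f\neg)$; since this pair is already known to be a Galois connection, the equivalence of (f) and (g) noted just before Theorem \ref{pt1} gives $(\neg g\neg)\leftrightarrows(\neg f\neg)$.

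For the converse I would apply what has just been proved to the pair $(\neg g\neg,\neg f\neg)$, which plays the roles of $f$ and $g$ with $A$ and $B$ interchanged: from $(\neg g\neg)\leftrightarrows(\neg f\neg)$ it yields $\bigl(\neg(\neg f\neg)\neg\bigr)\leftrightarrows\bigl(\neg(\neg g\neg)\neg\bigr)$, and by $\neg\neg x=x$ this is exactly $f\leftrightarrows g$. I expect no genuinely hard step; the only thing needing care is keeping straight, at each occurrence of $\neg$, each constant $\mathbf r$, and each $\leq$, whether it is being read in $\mathbf A$ or in $\mathbf B$ — the double negation law is what makes $\neg$ act as a perfect duality carrying the one Galois connection onto the other.
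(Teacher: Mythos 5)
Your proposal is correct and follows essentially the same route as the paper: establish the Galois connection for $\neg g\neg$ and $\neg f\neg$ from double negation and adjointness, transfer the constant condition via the dualities $x\rightarrow y=\neg(x\cdot\neg y)$ and $x\cdot y=\neg(x\rightarrow\neg y)$, and obtain the converse from the involutive nature of the construction. The only cosmetic difference is that you verify condition (f) for $\neg g\neg$ where the paper computes condition (g) for $\neg f\neg$; these are interchangeable by the equivalence of (f) and (g) noted before Theorem \ref{pt1}.
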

\begin{proof}
Assume that $f\leftrightarrows g$. Using the double negation law and the adjointness property we obtain that $x\leq \neg g(\neg y)$ if, and only if, $\neg f(\neg x) \leq y$.
The rest of the proof follows from $x\rightarrow y = \neg (x\cdot \neg y)$ and $x\cdot y = \neg(x\rightarrow \neg y)$ (see \cite{Mun}). Particularly, $\mathbf r\cdot \neg f (\neg x)= \neg (\mathbf r\rightarrow f (\neg x))=\neg (f(\mathbf r \rightarrow \neg x))= \neg f \neg (\mathbf r \cdot x)$.
\end{proof}

\begin{theorem}\label{konjug}
Let $R\colon I\times J\longrightarrow [0,1]$ be a fuzzy relation. Then $\neg f_R\neg = g_{R^{-1}}$ and $\neg g_R\neg = f_{R^{-1}}$.
\end{theorem}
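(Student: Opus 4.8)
The plan is a direct pointwise computation in the standard MV-algebra $[0,1]$, where $\neg$ is an order-reversing involution. Throughout, $R^{-1}\colon J\times I\longrightarrow[0,1]$ denotes the converse fuzzy relation, $R^{-1}(j,i)=R(i,j)$; with this convention $g_{R^{-1}}\colon[0,1]^I\longrightarrow[0,1]^J$ and $f_{R^{-1}}\colon[0,1]^J\longrightarrow[0,1]^I$, so that the domains and codomains of the two claimed equalities match those of $\neg f_R\neg$ and $\neg g_R\neg$ (recall that $\neg$ acts pointwise on a power).

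First I would record the two ingredients that do all the work. Since $[0,1]$ satisfies the double negation law, $\neg$ is an order-reversing bijection and hence converts arbitrary infima into suprema: $\neg\bigwedge M=\bigvee\{\neg m\mid m\in M\}$ for every $M\subseteq[0,1]$. Secondly, starting from $x\cdot y=\neg(x\to\neg y)$ (see \cite{Mun}) and applying $\neg$ together with $\neg\neg=\mathrm{id}$, one gets $\neg(a\to\neg b)=a\cdot b$ and $\neg(a\cdot\neg b)=a\to b$ for all $a,b\in[0,1]$.

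Then for $x\in[0,1]^I$ and $j\in J$ I would compute
\begin{eqnarray*}
(\neg f_R\neg)(x)(j) &=& \neg\big(f_R(\neg x)(j)\big)=\neg\Big(\bigwedge_{i\in I}\big(R(i,j)\to\neg x(i)\big)\Big)\\
&=&\bigvee_{i\in I}\neg\big(R(i,j)\to\neg x(i)\big)=\bigvee_{i\in I}\big(R(i,j)\cdot x(i)\big)=g_{R^{-1}}(x)(j),
\end{eqnarray*}
using the De Morgan property for the third equality and $\neg(a\to\neg b)=a\cdot b$ for the fourth; this gives $\neg f_R\neg=g_{R^{-1}}$. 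For the second identity I would argue symmetrically: for $x\in[0,1]^J$ and $i\in I$,
\begin{eqnarray*}
(\neg g_R\neg)(x)(i) &=& \neg\Big(\bigvee_{j\in J}\big(R(i,j)\cdot\neg x(j)\big)\Big)=\bigwedge_{j\in J}\neg\big(R(i,j)\cdot\neg x(j)\big)\\
&=&\bigwedge_{j\in J}\big(R(i,j)\to x(j)\big)=f_{R^{-1}}(x)(i),
\end{eqnarray*}
using $\neg(a\cdot\neg b)=a\to b$. Alternatively, once $\neg f_R\neg=g_{R^{-1}}$ is established, one may substitute $R^{-1}$ for $R$ and use $(R^{-1})^{-1}=R$ and $\neg\neg=\mathrm{id}$ to read off $\neg g_R\neg=f_{R^{-1}}$ for free.

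There is no genuine obstacle here: the statement is a bookkeeping identity, and the only point requiring attention is that the De Morgan law for infima and the interchange of $\cdot$ with $\to$ are being invoked in the MV-algebra $[0,1]$ (so that double negation is available), not in a general residuated lattice; everything else is just the defining formulas for $f_R$, $g_R$ and the pointwise definition of $\neg$.
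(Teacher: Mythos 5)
Your computation is correct and follows essentially the same route as the paper: a direct pointwise calculation that pushes $\neg$ through the infimum (resp. supremum) via De Morgan and then uses the MV-identities $\neg(a\to\neg b)=a\cdot b$ and $\neg(a\cdot\neg b)=a\to b$, with the indexing of $R^{-1}$ handled exactly as in the paper. The only difference is cosmetic: the paper proves the first identity and calls the second analogous, whereas you spell out both and also note the shortcut of substituting $R^{-1}$ into the first identity.
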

\begin{proof}
We have
\begin{eqnarray*}
\neg f_R(\neg x)(j)&=& \neg\bigwedge_{i\in I} (R(i,j)\rightarrow \neg x(i))\\
&=&  \bigvee_{i\in I} (R^{-1}(j,i)\cdot x(i))\\
&=& g_{R^{-1}}(x)(j).
\end{eqnarray*}
The proof of the second statement is analogous. 
\end{proof}

\begin{theorem}\label{boolean}
i) If a fuzzy relation $R\colon I\times J\longrightarrow [0,1]$ is boolean ($\{0,1\}$-valued) then $f_R(x)\cdot f_R(y)\leq f_R(x\cdot y)$ for all $x,y\in A^I$.

ii) Let $\mathbf A=(A;\oplus,\neg,\{\mathbf r\mid r\in [0,1]\cap \mathbb Q\})$ and $\mathbf B=(B;\oplus,\neg,\{\mathbf r\mid r\in [0,1]\cap \mathbb Q\})$ be a semisimple Pavelka's algebras and let  $f\colon A\longrightarrow B$ and $g\colon B\longrightarrow A$ be a mappings satisfying $f\leftrightarrows g$. If $f_R(x)\cdot f_R(y)\leq f_R(x\cdot y)$ then the relation $R$ from Theorem \ref{pt1} is boolean.
\end{theorem}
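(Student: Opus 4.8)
The plan is to treat the two parts independently: part i) by collapsing the defining formula of $f_R$ and using monotonicity of $\cdot$, and part ii) by feeding the hypothesis into the explicit construction of $R$ carried out in the proof of Theorem~\ref{pt1}.

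For part i), fix $j\in J$ and set $S_j=\{i\in I\mid R(i,j)=1\}$. Since $R$ is $\{0,1\}$-valued and $1\rightarrow a=a$, $0\rightarrow a=1$ hold in any residuated lattice, the formula $(f_R)$ collapses to $f_R(x)(j)=\bigwedge_{i\in S_j}x(i)$ (with the empty infimum read as $1$). Then for every $i\in S_j$ we have $f_R(x)(j)\leq x(i)$ and $f_R(y)(j)\leq y(i)$, so monotonicity of multiplication gives $f_R(x)(j)\cdot f_R(y)(j)\leq x(i)\cdot y(i)=(x\cdot y)(i)$; taking the infimum over $i\in S_j$ yields $f_R(x)(j)\cdot f_R(y)(j)\leq\bigwedge_{i\in S_j}(x\cdot y)(i)=f_R(x\cdot y)(j)$, and since $j$ was arbitrary the inequality $f_R(x)\cdot f_R(y)\leq f_R(x\cdot y)$ follows. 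This part presents no real difficulty.

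For part ii), read the hypothesis as $f(x)\cdot f(y)\leq f(x\cdot y)$ in $\mathbf B$ for all $x,y\in A$, and let $R\colon\s\mathbf A\times\s\mathbf B\longrightarrow[0,1]$ be the relation built in the proof of Theorem~\ref{pt1}, so that $R(F,G)=m_G/F$, where $m_G$ is the least element of $M_G=\{x\in A\mid f(x)/G=1\}$ (whose existence was obtained there as an analogue of Claim~\ref{cl2}). First I would show that $M_G$ is closed under $\cdot$: if $x,y\in M_G$, then applying the quotient homomorphism $\mathbf B\to\mathbf B/G$ to $f(x)\cdot f(y)\leq f(x\cdot y)$ gives $1=(f(x)/G)\cdot(f(y)/G)\leq f(x\cdot y)/G$, hence $x\cdot y\in M_G$. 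In particular $m_G\cdot m_G\in M_G$, so $m_G\leq m_G\cdot m_G\leq m_G$, i.e.\ $m_G$ is idempotent. Passing to $\mathbf A/F$, which for maximal $F$ is by semisimplicity a subalgebra of the standard MV-algebra $[0,1]$, we see that $R(F,G)=m_G/F$ is an idempotent element of $[0,1]$; as the only idempotents of the standard MV-algebra are $0$ and $1$, we get $R(F,G)\in\{0,1\}$, and since $F,G$ were arbitrary, $R$ is boolean.

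The single step that I expect to require genuine care, rather than pure bookkeeping, is the last one: one must invoke semisimplicity, namely that every quotient $\mathbf A/F$ by a maximal filter embeds into $[0,1]$, in order to conclude that an idempotent of $\mathbf A/F$ is forced to be $0$ or $1$. Everything else is an assembly of Theorem~\ref{pt1} together with elementary residuated-lattice identities.
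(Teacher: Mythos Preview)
Your proposal is correct and follows essentially the same route as the paper: in part ii) both you and the paper show that the hypothesis makes $M_G$ closed under $\cdot$, whence the least element $m_G$ is idempotent and its image $R(F,G)=m_G/F$ in $[0,1]$ must be $0$ or $1$. The only minor variation is in part i), where the paper distributes the MV-product over the two infima to get $\bigwedge_{i_1,i_2\in S_j}(x(i_1)\cdot y(i_2))$ and then restricts to the diagonal, whereas you argue directly from monotonicity of $\cdot$; your version is slightly more elementary since it does not appeal to infimum-preservation of the product, but the two arguments are otherwise identical.
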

\begin{proof}
i) We remark that  MV-product preserves existing infima and suprema (see \cite{Mun}). Thus if $R$ is a boolean relation then
$$f_R(x)(j)=\bigwedge_{i\in I}(R(i,j)\rightarrow x (i))= \bigwedge_{iRj} x (i).$$ 
Thus we can compute
\begin{eqnarray*}
f_R(x)(j)\cdot f_R(y)(j) &=& \big( \bigwedge_{iRj} x (i)\big)\cdot \big( \bigwedge_{iRj} y (i)\big)\\
&=& \bigwedge_{i_1,i_2 Rj} (x(i_1)\cdot y(i_2))\\
&\leq & \bigwedge_{iRj} (x\cdot y)(i)\\
&=& f_R(x\cdot y)(j).
\end{eqnarray*} 

If $f_R(x)\cdot f_R(y)\leq f_R(x\cdot y)$ then the set $M_G$ from Theorem \ref{pt1} is a filter. Thus, if $x,y\in M_G$ then $1=f_R(x)(j)\cdot f_R(y)(j)\leq f_R(x\cdot y)$ yields $x\cdot y\in M_G$.  Hence the least element $m_G\in M_G$ is an idempotent and $m_G/F\in [0,1]$ must be an idempotent too. However the standard MV-algebra possesses only two idempotents 0 and 1. Therefore $R(F,G)=m_G/F$ is a boolean relation.
\end{proof}
 
\subsection{Tense operators on Pavelka's algebra} We recall that a structure $(\mathbf B, G,H)$ is a tense Boolean algebra if $\mathbf B$ is a Boolean algebra and operators $G$ and $H$ satisfy $(B1)-(B3)$. Analogously, a tense MV-algebra is a triple $(\mathbf A,G,H)$ where $\mathbf A$ is an MV-algebra and $G$ and $H$ are unary operations satisfying $(T1)-(T6)$.

Representation theorems for tense Boolean algebras (see \cite{10}) and tense MV-algebras defined on semisimple MV-algebras (see \cite{BoPa}) say that tense operators $G$ and $H$ are both $\phi$-type functions such that $G$ and $\neg H\neg$ forms Galois connection induced by a Boolean relation. We show that in the case of Pavelka's logic (and also in the case of MV-algebras) the concept of tense MV-algebras is stronger than it should be.

\begin{defin}
Let  $\mathbf A=(A;\oplus,\neg,\{\mathbf r\mid r\in [0,1]\cap \mathbb Q\})$ be a Pavelka's algebra and let $G,H\colon A\longrightarrow A$ be operators such that for every $x,y\in A$,
\begin{itemize}
\item[(PT1)] $G(x\wedge y) = G(x)\wedge G(y)$ and $H(x\wedge y) = H(x)\wedge H(y)$,
\item[(PT2)] $\mathbf r\rightarrow G(x)= G(\mathbf r\rightarrow x)$ and $\mathbf r\rightarrow H(x)= H(\mathbf r\rightarrow x)$,
\item[(PT3)] $\neg H\neg G (x)\leq x$ and $\neg G\neg H (x)\leq x$.
\end{itemize}
\end{defin}

A \emph{time frame} is a couple $(T,R)$ where $T$ is a set and $R\colon T^2\longrightarrow [0,1]$ is a fuzzy relation. A representation theorem is a direct corollary of the previously proved ones.

\begin{theorem}[Representation Theorem for tense Pavelka's algebras]\label{repretense}
i) If $(T,R)$ is a time frame then an algebra $([0,1]^T;f_R,f_{R^-1})$ is  a tense Pavelka's algebra. We call this algebra a \emph{tense Pavelka's algebra induced by the time frame $(T,R)$}.

ii) Any tense Pavelka's algebra defined on a semisimple Pavelka's algebra is embeddable into a tense Pavelka's algebra induced by a time frame. We call this embedding and time frame (described in Theorem \ref{pt1})  a \emph{natural embedding} and a \emph{natural time frame}. 
\end{theorem}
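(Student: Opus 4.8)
The plan is to reduce the whole statement to the already-developed theory of $\phi$-type mappings and strong left adjoints, so that essentially no new computation is required.

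For {\it i)} I would first note that $[0,1]^{T}$ is a power of the standard Pavelka's algebra, hence itself a Pavelka's algebra, whose constants $\mathbf r$ are exactly the diagonal elements $r^{T}$, and that $f_R$ and $f_{R^{-1}}$ are self-maps of $[0,1]^{T}$ since $R^{-1}\colon T^{2}\longrightarrow[0,1]$ is again a fuzzy relation. By Theorem \ref{op} (with $I=J=T$) both $f_R$ and $f_{R^{-1}}$ are $\phi$-type mappings; in particular each is infima preserving, and specialising to two-element subsets of $[0,1]^{T}$ gives (PT1). The defining identity $d^{T}\rightarrow\phi(x)=\phi(d^{T}\rightarrow x)$ of a $\phi$-type mapping, read with $d=r$, is precisely (PT2) for $G=f_R$ and for $H=f_{R^{-1}}$. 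Finally, by Theorem \ref{op} the pairs $f_R,g_R$ and $f_{R^{-1}},g_{R^{-1}}$ are Galois connections, so $g_Rf_R(x)\leq x$ and $g_{R^{-1}}f_{R^{-1}}(x)\leq x$; inserting the conjugation identities $\neg f_{R^{-1}}\neg = g_R$ and $\neg f_R\neg = g_{R^{-1}}$ from Theorem \ref{konjug} rewrites these as $\neg H\neg G(x)\leq x$ and $\neg G\neg H(x)\leq x$, i.e.\ (PT3).

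For {\it ii)} let $(\mathbf A,G,H)$ be a tense Pavelka's algebra with $\mathbf A$ semisimple, and set $g=\neg H\neg$. The first step is to manufacture a Galois connection: (PT1) makes $G$ and $H$, hence $G$ and $g$, monotone, and unwinding the two inequalities of (PT3) via the double negation law yields $gG(x)\leq x$ and $x\leq Gg(x)$, so by Lemma \ref{ll1} the pair $(G,\neg H\neg)$ is a Galois connection on $\mathbf A$. The second step is the observation that (PT2) for $G$ is exactly condition (f), whence $G\leftrightarrows\neg H\neg$. Then Theorem \ref{pt1}(ii), applied with $\mathbf B=\mathbf A$, $f=G$ and $g=\neg H\neg$, produces a fuzzy relation $R\colon\s\mathbf A\times\s\mathbf A\longrightarrow[0,1]$ — i.e.\ a time frame $(\s\mathbf A,R)$ — with $n_{\mathbf A}(G(x))=f_R(n_{\mathbf A}(x))$ and $n_{\mathbf A}(\neg H\neg\,x)=g_R(n_{\mathbf A}(x))$ for all $x\in A$.

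The last step transports $H$ to the power algebra. Since $H(x)=\neg g(\neg x)$ and the natural embedding $n_{\mathbf A}$ is a homomorphism of MV-reducts, hence commutes with $\neg$, I would compute
$$n_{\mathbf A}(H(x))=\neg\, n_{\mathbf A}\big(g(\neg x)\big)=\neg\, g_R\big(\neg\, n_{\mathbf A}(x)\big)=(\neg g_R\neg)\big(n_{\mathbf A}(x)\big)=f_{R^{-1}}\big(n_{\mathbf A}(x)\big),$$
the last equality being Theorem \ref{konjug}. Hence $n_{\mathbf A}$ is an embedding of Pavelka's algebras intertwining $(G,H)$ with $(f_R,f_{R^{-1}})$, and by {\it i)} the target $([0,1]^{\s\mathbf A};f_R,f_{R^{-1}})$ is a tense Pavelka's algebra; this is the asserted natural embedding into the tense Pavelka's algebra induced by the natural time frame $(\s\mathbf A,R)$. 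I expect the only genuinely delicate point to be this final transport step: one has to be sure that the single relation $R$ extracted from $G$ automatically governs $H$ through its converse $R^{-1}$, which is precisely what the conjugation identities of Theorem \ref{konjug} — conceptually underpinned by Theorem \ref{otocenesipky} — guarantee.
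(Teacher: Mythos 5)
Your proposal is correct and follows essentially the same route as the paper, which proves (PT1)--(PT3) yield $G\leftrightarrows\neg H\neg$ and then invokes Theorem \ref{pt1} together with the conjugation identities of Theorem \ref{konjug}. You simply spell out in detail the steps (verification of (PT1)--(PT3) for $f_R,f_{R^{-1}}$ in part i), and the transport of $H$ via $\neg g_R\neg=f_{R^{-1}}$ in part ii)) that the paper leaves implicit in its two-sentence proof.
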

\begin{proof}
Axioms (PT1)-(PT3) show that $G\leftrightarrows \neg H\neg$. The rest is a corollary of the previously proved Theorems (especially of Theorem \ref{pt1}).
\end{proof}

The Following theorem compares concepts of tense algebras induced by a boolean time frame and tense MV-algebras induced by a fuzzy time frame.

\begin{theorem}\label{tenseboolean}
Let us have a tense Pavelka's algebra $(\mathbf A,G,H)$ defined on a semisimple Pavelka's algebra $\mathbf A=(A;\oplus,\neg,\{\mathbf r\mid r\in [0,1]\cap \mathbb Q\})$. Then the following statements are equivalent
\begin{itemize}
\item[i)] $G(x)\cdot G(y)\leq G(x\cdot y)$,
\item[ii)] $H(x)\cdot H(y)\leq H(x\cdot y)$,
\item[iii)] the tense Pavelka's algebra $(\mathbf A,G,H)$ is induced by a boolean time frame,
\item[iv)] the tense MV-algebra $(MV(\mathbf A),G,H)$, where $MV(\mathbf A)$ is the MV-reduct of $\mathbf A$, is a tense MV-algebra defined by (T1)-(T6).
\end{itemize}
\begin{proof}
Equivalence of the statements i), ii) and iii) follows from Theorem \ref{boolean}. The rest of the proof is contained in the representation theorem of tense MV-algebras (see \cite{BoPa}).
\end{proof}
\end{theorem}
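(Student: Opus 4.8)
The plan is to route the entire equivalence through the representation of Theorem~\ref{repretense} and then to detect booleanness of the natural time frame with Theorem~\ref{boolean}. First I would unwind the hypotheses: axioms (PT1)--(PT3) say exactly that $G\leftrightarrows\neg H\neg$ and, symmetrically, $H\leftrightarrows\neg G\neg$. Indeed, (PT3) together with Lemma~\ref{ll1} --- applied once directly and once after substituting $\neg x$ for $x$ and using the double negation law --- yields the two Galois connections, while the strong-adjoint clause (f) in the definition of $\leftrightarrows$ is precisely (PT2). Consequently Theorem~\ref{pt1}(ii) furnishes a fuzzy relation $R\colon\s\mathbf A\times\s\mathbf A\longrightarrow[0,1]$ with $G=f_R$ and $\neg H\neg=g_R$ along the natural embedding $n_{\mathbf A}$, and then Theorem~\ref{konjug} gives $H=\neg g_R\neg=f_{R^{-1}}$. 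In short, $(\mathbf A,G,H)$ embeds via $n_{\mathbf A}$ into $([0,1]^{\s\mathbf A};f_R,f_{R^{-1}})$, the tense Pavelka's algebra induced by the natural time frame $(\s\mathbf A,R)$.

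Next I would establish (i)$\Leftrightarrow$(ii)$\Leftrightarrow$(iii). Since $n_{\mathbf A}$ is an MV-embedding and the square of Theorem~\ref{pt1}(ii) commutes, the inequality (i) is both preserved and reflected by $n_{\mathbf A}$; hence (i) holds in $\mathbf A$ if and only if the corresponding inequality holds for $G=f_R$, which by Theorem~\ref{boolean}(ii) forces $R$ to be boolean, that is, gives (iii). Conversely Theorem~\ref{boolean}(i) shows that a boolean time frame always satisfies (i), and since (i) is a universally quantified inequality in the language with $\cdot$ and $G$, it passes down to the subalgebra $\mathbf A$. The equivalence (ii)$\Leftrightarrow$(iii) is the same argument applied to $H=f_{R^{-1}}$ (using $H\leftrightarrows\neg G\neg$), once one observes that $R$ is boolean if and only if $R^{-1}$ is. Thus (i), (ii), (iii) are mutually equivalent, their common content being ``the natural relation $R$ is boolean''.

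Finally I would treat (iii)$\Leftrightarrow$(iv). When $R$ is boolean the operators $f_R$ and $f_{R^{-1}}$ act pointwise as $\bigwedge_{iRj}$, and a direct computation --- already carried out in the representation theorem for semisimple tense MV-algebras, which I would cite as \cite{BoPa} --- shows that on the MV-reduct they then satisfy (T1)--(T6): (T2)--(T3) are Theorem~\ref{boolean}(i) and its $\oplus$-analogue, while (T4)--(T5) hold because over the chain $[0,1]$ the operations $x\mapsto x\cdot x$ and $x\mapsto x\oplus x$ commute with infima. Conversely (T2) is literally statement (i), so (iv)$\Rightarrow$(i)$\Rightarrow$(iii), closing the loop. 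I expect the main obstacle to be the bookkeeping of the first paragraph: keeping straight that $G$ is governed by $R$ while $H$ is governed by $R^{-1}$, checking that the strong-adjoint hypotheses of Theorem~\ref{boolean} really are met by both pairs $(G,\neg H\neg)$ and $(H,\neg G\neg)$, and making sure booleanness transports through $n_{\mathbf A}$ in both directions; the (iv) half then reduces, as in the paper, to a citation.
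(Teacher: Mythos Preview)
Your proposal is correct and follows essentially the same route as the paper: the paper's proof is a two-line citation of Theorem~\ref{boolean} for the equivalence of (i)--(iii) and of \cite{BoPa} for the link to (iv), and your argument is precisely an unpacking of those citations through the representation of Theorem~\ref{repretense}/\ref{pt1} and Theorem~\ref{konjug}. Your additional bookkeeping (that $H$ corresponds to $R^{-1}$ via Theorem~\ref{konjug}, that $R$ is boolean iff $R^{-1}$ is, and that (T2) already contains (i)) is exactly what the paper leaves implicit.
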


\begin{theorem}\label{vlastn}
Let us have a tense Pavelka's algebra $(\mathbf A,G,H)$ defined on semisimple a Pavelka's algebra $\mathbf A=(A;\oplus,\neg,\{\mathbf r\mid r\in [0,1]\cap \mathbb Q\})$ then \begin{itemize}
\item[i)] the natural frame of $\mathbf A$ is reflexive if, and only if, $G(x)\leq x$ (or $H(x)\leq x$),
\item[ii)] the natural frame $\mathbf A$ is symmetric if, and only if, $H(x)=G(x)$,
\item[iii)] the natural frame $\mathbf A$ is transitive if, and only if, $G(x)\leq GG(x)$ or $H(x)\leq HH(x)$.
\end{itemize}
\end{theorem}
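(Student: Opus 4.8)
The plan is to push the statement through the representation already in hand. By Theorem~\ref{repretense}(ii) we may regard $\mathbf A$ as a subalgebra of $[0,1]^{\s\mathbf A}$ via the natural embedding $n_{\mathbf A}$, so that $G=f_R|_{\mathbf A}$ and $H=f_{R^{-1}}|_{\mathbf A}$, where $(\s\mathbf A,R)$ is the natural time frame produced in Theorem~\ref{pt1}. The one device I will use in each of the three converse implications is the canonical witness furnished inside that construction: for every $\beta\in\s\mathbf A$ the lattice filter $M_\beta=\{a\in A\mid G(a)(\beta)=1\}$ has a least element $m_\beta\in A$, and $m_\beta(\alpha)=R(\alpha,\beta)$ for all $\alpha$; in other words the $\beta$-th column of $R$ is an honest element of $\mathbf A$ and is the smallest $a$ with $G(a)(\beta)=1$. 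In every case the implication ``frame property $\Rightarrow$ operator inequality'' is the routine direction — a one-line computation straight from the definition of $f_R$, valid on all of $[0,1]^{\s\mathbf A}$ and hence on $\mathbf A$ — so I only spell out the converses.

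For (i): if $R$ is reflexive then $f_R(y)(\alpha)=\bigwedge_{\beta}(R(\beta,\alpha)\rightarrow y(\beta))\le R(\alpha,\alpha)\rightarrow y(\alpha)=1\rightarrow y(\alpha)=y(\alpha)$, so $G(x)\le x$ on $\mathbf A$; conversely, if $G(x)\le x$ for all $x\in A$, then for fixed $\alpha$ we have $m_\alpha\in M_\alpha$, hence $1=G(m_\alpha)(\alpha)\le m_\alpha(\alpha)=R(\alpha,\alpha)$, forcing $R(\alpha,\alpha)=1$, and since $\alpha$ is arbitrary $R$ is reflexive. For (ii): if $R$ is symmetric then $R=R^{-1}$, so $f_R=f_{R^{-1}}$ and $G=H$ on $\mathbf A$; conversely, assume $G(x)=H(x)$ for all $x\in A$. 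From $H(a)(\alpha)=\bigwedge_i(R(\alpha,i)\rightarrow a(i))$ one reads off that $H(a)(\alpha)=1$ holds exactly when $a(i)\ge R(\alpha,i)$ for all $i$; since $G=H$, the least element $m_\alpha$ of $M_\alpha$ is thus the $\le$-least element of $\{a\in A\mid a(i)\ge R(\alpha,i)\ \forall i\}$, so in particular $m_\alpha(\beta)\ge R(\alpha,\beta)$. As $m_\alpha(\beta)=R(\beta,\alpha)$, this gives $R(\beta,\alpha)\ge R(\alpha,\beta)$ for all $\alpha,\beta$, and interchanging $\alpha$ with $\beta$ yields equality, i.e. $R$ is symmetric.

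For (iii), a direct computation from the definition of $f_R$, the residuation law $x\rightarrow(y\rightarrow z)=(x\cdot y)\rightarrow z$ and Lemma~\ref{rez}(iii) gives $GG(y)(k)=\bigwedge_{i,j}\big((R(i,j)\cdot R(j,k))\rightarrow y(i)\big)$. If $R$ is transitive, each term here is $\ge R(i,k)\rightarrow y(i)$, whence $GG(y)\ge G(y)$ and $G(x)\le GG(x)$ on $\mathbf A$. Conversely, suppose $G(x)\le GG(x)$ for all $x\in A$ and fix $\beta$; feeding $x=m_\beta$ into this inequality and using $G(m_\beta)(\beta)=1$ together with $m_\beta(i)=R(i,\beta)$, we get $1=G(m_\beta)(\beta)\le GG(m_\beta)(\beta)=\bigwedge_{i,j}\big((R(i,j)\cdot R(j,\beta))\rightarrow R(i,\beta)\big)$, so $R(i,j)\cdot R(j,\beta)\le R(i,\beta)$ for all $i,j$; as $\beta$ ranges over $\s\mathbf A$ this is exactly transitivity of $R$. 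The ``$H$'' alternatives in (i) and (iii) follow by running the same arguments on the tense Pavelka's algebra $(\mathbf A,H,G)$ — still a tense Pavelka's algebra, the axioms (PT1)--(PT3) being symmetric in $G$ and $H$ — whose natural frame is the converse $R^{-1}$ by Theorem~\ref{konjug}, together with the evident facts that $R$ is reflexive (resp. transitive) iff $R^{-1}$ is. The step I expect to be the real obstacle is precisely these converse directions: because $\mathbf A$ may be a proper subalgebra of $[0,1]^{\s\mathbf A}$ one cannot probe the operator inequalities against arbitrary indicator-like functions, and the content of the proof is exactly that Theorem~\ref{pt1} supplies the single distinguished element $m_\beta$ of $\mathbf A$ whose coordinates reproduce the $\beta$-th column of $R$ — that one element already suffices to recover each frame condition.
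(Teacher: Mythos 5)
Your proof is correct and follows essentially the paper's route: the forward implications are the same one-line computations with $f_R$, and the converses are extracted from the canonical construction of the natural frame in Theorem~\ref{pt1}. The only difference in execution is that where the paper manipulates the defining meet $R(i,j)=\bigwedge_{a}(G(a)(j)\rightarrow a(i))$ directly, you systematically plug in the least witness $m_\beta$ with $m_\beta(\alpha)=R(\alpha,\beta)$; this is equivalent, and in part ii) it is in fact tidier than the paper's own argument, which silently uses the identity $\bigwedge_a(H(a)(j)\rightarrow a(i))=R(j,i)$, whereas you need only $H=f_{R^{-1}}$ on $A$ together with membership of $m_\alpha$ in the relevant set.

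One step deserves more care: for the $H$-alternatives in i) and iii) you assert that the natural frame of $(\mathbf A,H,G)$ is $R^{-1}$ ``by Theorem~\ref{konjug}''. Theorem~\ref{konjug} only states $\neg f_R\neg=g_{R^{-1}}$ for a given relation; it does not identify the canonically reconstructed relation $R_H(i,j)=\bigwedge_a(H(a)(j)\rightarrow a(i))$ with $R^{-1}$, and a priori relations inducing the same operators on the subalgebra need not coincide. The identification is true but needs a short argument: from $H=f_{R^{-1}}$ on $A$ and $(b\rightarrow c)\rightarrow c\geq b$ one gets $R_H(i,j)\geq R(j,i)$, and symmetrically, applying Theorem~\ref{pt1} to $H\leftrightarrows\neg G\neg$ and Theorem~\ref{konjug} gives $G=f_{R_H^{-1}}$ on $A$, whence $R(i,j)\geq R_H(j,i)$; together $R_H=R^{-1}$. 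Alternatively you can bypass the swapped algebra entirely: if $H(x)\leq x$ for all $x$, then $G(y)\leq\neg H\neg G(y)\leq y$ by (PT3), and a similar two-line computation with the Galois connection $G\leftrightarrows\neg H\neg$ shows $G\leq GG$ if and only if $H\leq HH$, so the $H$-versions reduce to the $G$-versions you already proved. This is a minor repair; note the paper's own proof treats only the $G$-versions, so on this point you are not worse off than the original, merely mis-citing the lemma that would close it.
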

\begin{proof}
i) If the frame is reflexive then $G(x)(j)=f_R(x)(j)=\bigwedge_{i\in T}(R(i,j)\rightarrow x(i))\leq R(j,j)\rightarrow x(j)=x(j)$ for all $j\in T$. Thus $G(x)\leq x$. Conversely, if $G(x)\leq x$ then $R(i,i)=\bigwedge_{a\in A}(G(a)(i)\rightarrow a(i))=1$.

ii) If $R$ is symmetric then $G=f_R=f_{R^{-1}}=H$. Conversely, if $G=H$ then $R(i,j)=\bigwedge_{a\in A}(G(a)(j)\rightarrow a(i)) = \bigwedge_{a\in A}(H(a)(j)\rightarrow a(i))=R(j,i)$.

iii) If $R$ is transitive then
\begin{eqnarray*}
GG(x)(k) &=& \bigwedge_{j\in T}(R(j,k)\rightarrow G(x)(j))\\
&=& \bigwedge_{j\in T}\big(R(j,k)\rightarrow \big(\bigwedge_{i\in T}(R(i,j)\rightarrow x(i)\big)\big)\\
&=& \bigwedge_{i,j\in T}((R(i,j)\cdot R(j,k))\rightarrow x(i))\\
&\geq & \bigwedge_{i\in T}( R(i,k))\rightarrow x(i))\\
&=& G(x)(k).
\end{eqnarray*}
Conversely, if $G(x)\leq GG(x)$ then
\begin{eqnarray*}
R(i,j)\cdot R(j,k) &=& \big( \bigwedge_{a\in A}(G(a)(j)\rightarrow a(i))\big)\cdot \big( \bigwedge_{a\in A}(G(a)(k)\rightarrow a(j))\big) \\
&\leq & \bigwedge_{a\in A} (G(a)(j)\rightarrow a(i))\cdot (GG(a)(k)\rightarrow G(a)(j))\\
&\leq & \bigwedge_{a\in A} (GG(a)(k)\rightarrow a(i))\\
&\leq & \bigwedge_{a\in A} (G(a)(k)\rightarrow a(i))\\
&=& R(i,k).
\end{eqnarray*}
\end{proof}

\subsection{Monadic Pavelka's algebras} Monadic MV-algebras has been introduced as MV-algebras enriched by a unary operation $\exists$ satisfying ($\exists$1)-($\exists$6). G. Georgescu extends the concept of monadic MV-algebras to Pavelka's algebras adding the axiom $\mathbf r = \exists \mathbf r$ (see \cite{Geo}). In this section we propose new, wider definition of monadic Pavelka's algebras and we state a representation theorem.

\begin{defin}
Let $\mathbf A=(A;\oplus,\neg,\{\mathbf r\mid r\in [0,1]\cap \mathbb Q\})$ be a Pavelka's algebra. Then by \emph{monadic Pavelka's algebra} we mean a couple $(\mathbf A,\exists)$ where $\exists$ is a closure operator satisfying $\exists \neg \exists (x)=\neg \exists (x)$ and $\mathbf r\cdot \exists (x)=\exists (\mathbf r\cdot x)$. 
\end{defin}

Our definition of monadic Pavelka's algebras is weaker then the original one. All conditions assumed in our new definition are satisfied for all 'original monadic Pavelka's algebras`. The following two theorems make clear relations between the already established concepts.

\begin{theorem}\label{monadic}
Let us have a Pavelka's algebra $\mathbf A=(A;\oplus,\neg,\{\mathbf r\mid r\in [0,1]\cap \mathbb Q\})$ and a closure operator $\exists\colon A\longrightarrow A$. Then the following propositions are equivalent
\begin{itemize}
\item[i)] the algebra $(\mathbf A, \exists )$ is a monadic Pavelka's algebra,
\item[ii)] the algebra $(\mathbf A,\forall,\forall)$, where $\forall = \neg\exists\neg$, is a tense Pavelka's algebra.
\end{itemize} 
\end{theorem}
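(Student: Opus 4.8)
The plan is to prove the equivalence of (i) and (ii) by unwinding the two definitions and showing they amount to the same list of conditions. Recall that a monadic Pavelka's algebra $(\mathbf A,\exists)$ requires: $\exists$ is a closure operator, $\exists\neg\exists(x)=\neg\exists(x)$, and $\mathbf r\cdot\exists(x)=\exists(\mathbf r\cdot x)$. A tense Pavelka's algebra $(\mathbf A,\forall,\forall)$ requires the single operator $\forall$ to satisfy (PT1)--(PT3), which here read: $\forall(x\wedge y)=\forall(x)\wedge\forall(y)$, then $\mathbf r\rightarrow\forall(x)=\forall(\mathbf r\rightarrow x)$, and $\neg\forall\neg\forall(x)\leq x$ (both clauses of (PT1)--(PT3) collapse to one since $G=H=\forall$). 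So the whole proof is a dictionary translation under the substitution $\forall=\neg\exists\neg$, equivalently $\exists=\neg\forall\neg$ using the double negation law.

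First I would handle the direction (i) $\Rightarrow$ (ii). Assuming $(\mathbf A,\exists)$ monadic, set $\forall=\neg\exists\neg$. The condition $\mathbf r\cdot\exists(x)=\exists(\mathbf r\cdot x)$ together with $\mathbf r\rightarrow y=\neg(\mathbf r\cdot\neg y)$ and $\mathbf r\cdot y=\neg(\mathbf r\rightarrow\neg y)$ (from \cite{Mun}, as used in Theorem~\ref{otocenesipky}) gives $\mathbf r\rightarrow\forall(x)=\forall(\mathbf r\rightarrow x)$, which is (PT2). For (PT1): $\exists$ being a closure operator, hence monotone, makes $\forall$ monotone; since $\exists$ is also additive in the appropriate sense one checks $\forall$ preserves finite meets --- here I would invoke the standard fact that in a monadic algebra the quantifier $\exists$ satisfies $\exists(x\vee y)=\exists x\vee\exists y$ (this follows from $\exists\neg\exists x=\neg\exists x$ together with $\exists$ being a closure operator, via the Galois-connection argument already sketched in the Universal/Existential Quantifiers subsection: $\forall$ and $\exists$ form a Galois connection on $\mathbf A$, and $\forall$ as an interior operator coming from a Galois connection is meet-preserving by Theorem~\ref{rep}). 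Finally (PT3): $\exists\neg\exists x=\neg\exists x$ rewrites, via double negation, as $\forall\neg\forall x=\neg\forall x$; combined with $\forall$ being an interior operator ($\forall x\leq x$) it yields $\neg\forall\neg\forall x\leq x$, which is the single instance of (PT3).

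For the converse (ii) $\Rightarrow$ (i), I would run the same computations in reverse, setting $\exists=\neg\forall\neg$. From (PT2) and the same MV-identities one recovers $\mathbf r\cdot\exists(x)=\exists(\mathbf r\cdot x)$. The remaining point is that $\exists$ is a closure operator: (PT1) forces $\forall$ monotone, hence $\exists$ monotone; the axiom $\neg\forall\neg\forall(x)\leq x$ says $\exists\exists x\leq x$ in one rewriting and must be combined with the Galois-connection structure --- by Theorem~\ref{otocenesipky}, since $\forall$ (with the trivial Pavelka structure) can be seen via Theorem~\ref{boolean}-type reasoning, or more directly: (PT1) and (PT3) together are exactly the conditions that make $\forall$ and $\neg\forall\neg=\exists$ form a Galois connection on $\mathbf A$ (apply Lemma~\ref{ll1} to the inequality $\neg\forall\neg\forall x\leq x$ after checking the dual $x\leq\neg\forall\neg\forall x$ follows from monotonicity), and then Corollary~\ref{c1} gives that $\exists=\forall\circ\exists$-side is a closure operator with $x\leq\exists x$ and $\exists\exists x=\exists x$. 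The idempotency $\exists\neg\exists x=\neg\exists x$ is the rewriting of $\forall\neg\forall x=\neg\forall x$, and the latter follows from (PT3) $\neg\forall\neg\forall x\leq x$ applied at $\neg\forall x$ in place of... --- more carefully, one shows $\forall\neg\forall x=\neg\forall x$ by noting $\neg\forall x$ is a closed element of the closure operator $\neg\forall\neg$ since $\forall x=\forall\forall x$.

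The main obstacle I anticipate is not any single hard inequality but rather making the bookkeeping between $\exists$ and $\forall$ watertight: one must be careful that the algebra here carries no nontrivial quantifier-on-constants axiom (the paper explicitly weakens Georgescu's definition), so I must verify that $\mathbf r\cdot\exists(x)=\exists(\mathbf r\cdot x)$ is genuinely equivalent to $\mathbf r\rightarrow\forall(x)=\forall(\mathbf r\rightarrow x)$ and does not secretly require $\exists\mathbf r=\mathbf r$. That equivalence is exactly the content of the equivalence of conditions (f) and (g) in the definition of strong left adjoint (noted there to be an easy analogy of Corollary~\ref{cor3}), applied with $f=g=\forall$ and its adjoint $\exists$. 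Once that translation lemma is in hand, the theorem is a routine matching of the two axiom lists, and I would present it in exactly that compressed form.
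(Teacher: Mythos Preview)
Your overall strategy matches the paper's: both directions go through the Galois connection between $\forall$ and $\exists=\neg\forall\neg$, and the (f)/(g) equivalence from the strong-adjoint definition handles the constant axiom. The paper is simply more compressed: for (i)$\Rightarrow$(ii) it observes that $\exists\neg\exists=\neg\exists$ together with the closure property give $\exists\forall x\leq x\leq\forall\exists x$, hence $\forall\leftrightarrows\exists$, and this single line encodes (PT1)--(PT3) at once rather than checking them separately.

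However, your (ii)$\Rightarrow$(i) has real gaps. First, you spend effort proving that $\exists$ is a closure operator, but this is a standing hypothesis of the theorem (``and a closure operator $\exists\colon A\to A$''); the paper uses this hypothesis freely in both directions. Second, the ``dual'' inequality you write, $x\leq\neg\forall\neg\forall x$, is the wrong one: what is needed for the Galois connection is $x\leq\forall\neg\forall\neg x=\forall\exists x$, and this follows not from monotonicity but from (PT3) by substituting $\neg x$ and applying $\neg$ with double negation. Third, and most importantly, your derivation of $\forall\neg\forall x=\neg\forall x$ does not work as stated: you argue that $\neg\forall x$ is a closed element of $\exists=\neg\forall\neg$ because $\forall\forall x=\forall x$, but ``$\neg\forall x$ is $\exists$-closed'' unpacks to $\neg\forall\forall x=\neg\forall x$, which is just idempotency again and says nothing about $\forall(\neg\forall x)$. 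The paper's argument is the correct one: once the Galois connection is in hand, apply adjointness to the closure inequality $\exists\exists x\leq\exists x$ to get $\exists x\leq\forall\exists x=\neg\exists\neg\exists x$, then negate to obtain $\exists\neg\exists x\leq\neg\exists x$; the reverse inequality is immediate from $y\leq\exists y$.
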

\begin{proof}
{\it i) $\Rightarrow$ ii)} Since $\exists$ is a closure operator, it is monotone. Moreover, the axiom $\exists \neg \exists (x)=\neg \exists (x)$ yields 
$$\exists\forall(x)=\exists \neg \exists \neg(x)\leq x\leq \neg\exists\neg \exists (x)=\forall\exists (x).$$ Thus $\forall$ and $\exists$ form a Galois connection, $\forall \leftrightarrows \exists$, and $(\mathbf A, \forall,\forall)$ is a tense Pavelka's algebra.

{\it ii) $\Rightarrow$ i)} Let us assume that $(\mathbf A, \forall,\forall)$ is a tense Pavelka's algebra. Since $\exists$ is a closure operator we have $\neg\exists (x)\leq \exists\neg\exists (x)$. Moreover, $\exists\exists (x)\leq \exists (x)$ and the adjointness property we have $\exists (x)\leq \neg\exists\neg\exists (x)$  and $\exists\neg\exists (x)\leq\neg\exists (x)$. Thus $(\mathbf A,\exists)$ is a monadic Pavelka's algebra.
\end{proof}

The previous theorem and the representation theorem for tense Pavelka's algebras yields similar representation for monadic Pavelka's algebras.

\begin{theorem}\label{monadicrepre}
i) Let $I$ be an arbitrary set and let $R\colon I^2\longrightarrow [0,1]$ be a fuzzy equivalence (reflexive, symmetric and transitive fuzzy relation). Then $([0,1]^I,g_R)$ is a monadic Pavelka's algebra. We call this algebra a \emph{monadic Pavelka's algebra induced by the fuzzy equivalence $R$}.

ii) Any Pavelka's algebra $(\mathbf A,\exists)$ defined on a semisimple Pavelka's algebra $\mathbf A=(A;\oplus,\neg,\{\mathbf r\mid r\in [0,1]\cap \mathbb Q\})$ is embeddable into a Pavelka's algebra induced by some a equivalence. 
\end{theorem}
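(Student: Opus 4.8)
The plan is to reduce both parts to the representation theorem for tense Pavelka's algebras, using the correspondence of Theorem~\ref{monadic} between a monadic operator $\exists$ and the tense pair $(\forall,\forall)$ with $\forall=\neg\exists\neg$.

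For part i) I would first check that $g_R$ is a closure operator on $[0,1]^I$. Reflexivity of $R$ gives $g_R(x)(i)=\bigvee_{j}(R(i,j)\cdot x(j))\geq R(i,i)\cdot x(i)=x(i)$, so $x\leq g_R(x)$; using Lemma~\ref{rez}~i) and transitivity we get $g_Rg_R(x)(i)=\bigvee_{j,k}\bigl(R(i,j)\cdot R(j,k)\cdot x(k)\bigr)\leq\bigvee_{k}\bigl(R(i,k)\cdot x(k)\bigr)=g_R(x)(i)$, while the reverse inequality is immediate from monotonicity and $x\leq g_R(x)$; hence $g_Rg_R=g_R$. Symmetry of $R$ gives $R^{-1}=R$, so Theorem~\ref{konjug} yields $\neg g_R\neg=f_{R^{-1}}=f_R$. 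By Theorem~\ref{repretense}~i) the algebra $([0,1]^I;f_R,f_{R^{-1}})=([0,1]^I;f_R,f_R)$ is a tense Pavelka's algebra, i.e.\ $([0,1]^I,\forall,\forall)$ with $\forall=f_R=\neg g_R\neg$ is tense; since $g_R$ is a closure operator, Theorem~\ref{monadic} (direction ii)$\Rightarrow$i)) then shows that $([0,1]^I,g_R)$ is a monadic Pavelka's algebra. (Alternatively the two defining identities can be verified directly: $\mathbf r\cdot g_R(x)=g_R(\mathbf r\cdot x)$ is the $\rho$-type property of $g_R=\rho_R$ established in Theorem~\ref{op}, and $g_R\neg g_R(x)=\neg g_R(x)$ follows from $R(i,k)\cdot R(i,j)\leq R(j,k)$, which holds by symmetry and transitivity, together with residuation.)

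For part ii) let $(\mathbf A,\exists)$ be a monadic Pavelka's algebra on a semisimple $\mathbf A$ and put $\forall=\neg\exists\neg$. By Theorem~\ref{monadic}, $(\mathbf A,\forall,\forall)$ is a tense Pavelka's algebra, so by Theorem~\ref{repretense}~ii) the natural embedding $n_{\mathbf A}$ embeds it into the tense Pavelka's algebra $([0,1]^{\s\mathbf A};f_R,f_{R^{-1}})$ induced by the natural time frame $(\s\mathbf A,R)$, and $n_{\mathbf A}(\forall(x))=f_R(n_{\mathbf A}(x))$ for all $x$. It remains to see that $R$ is a fuzzy equivalence. Since $\exists$ is a closure operator we have $\neg x\leq\exists\neg x$, hence $\forall(x)=\neg\exists\neg x\leq x$, so the natural frame is reflexive by Theorem~\ref{vlastn}~i); since the two tense operators here coincide ($G=H=\forall$), it is symmetric by Theorem~\ref{vlastn}~ii); and using $\exists\exists=\exists$ and the double negation law one computes $\forall\forall(x)=\neg\exists\exists\neg x=\neg\exists\neg x=\forall(x)$, so $G(x)\leq GG(x)$ and the frame is transitive by Theorem~\ref{vlastn}~iii). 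Thus $R$ is a fuzzy equivalence and, by part i), $([0,1]^{\s\mathbf A},g_R)$ is the monadic Pavelka's algebra induced by $R$. Finally $n_{\mathbf A}$ is an embedding of $(\mathbf A,\exists)$ into it: it is already a Pavelka's algebra embedding, it commutes with $\neg$, and since $\exists=\neg\forall\neg$ while $g_R=\neg f_R\neg$ (Theorem~\ref{konjug} and symmetry), the identity $n_{\mathbf A}(\forall(x))=f_R(n_{\mathbf A}(x))$ gives $n_{\mathbf A}(\exists(x))=g_R(n_{\mathbf A}(x))$.

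The proof involves no single deep step; the care needed is in checking the hypotheses of Theorem~\ref{monadic} (that $g_R$, resp.\ $\exists$, is genuinely a closure operator) and in translating the closure-operator axioms for $\exists$ into the frame conditions of Theorem~\ref{vlastn} — in particular obtaining $\forall\forall=\forall$ from $\exists\exists=\exists$ via double negation, which is exactly what yields transitivity of the natural frame.
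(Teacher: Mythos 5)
Your proposal is correct and follows essentially the same route as the paper, which disposes of this theorem as a direct corollary of Theorems \ref{konjug}, \ref{repretense}, \ref{vlastn} and \ref{monadic}; you simply spell out the details the paper leaves implicit (that $g_R$ is a closure operator with $\neg g_R\neg=f_R$ when $R$ is an equivalence, and that the closure-operator axioms for $\exists$ translate via \ref{vlastn} into reflexivity, symmetry and transitivity of the natural frame).
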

\begin{proof}
This theorem is a direct corollary of Theorems \ref{konjug}, \ref{repretense}, \ref{vlastn} and \ref{monadic}. 
\end{proof}

Finally, we show that the `original monadic Pavelka's algebras' are just monadic Paveka's algebras induced by a boolean equivalence.

\begin{theorem}
Let $(\mathbf A,\exists)$ be a monadic Pavelka's algebra defined on a semisimple Pavelka's algebra $\mathbf A=(A;\oplus,\neg,\{\mathbf r\mid r\in [0,1]\cap \mathbb Q\})$. Then the following statements are equivalent
\begin{itemize}
\item[i)] the algebra $(\mathbf A,\exists)$ is induced by a boolean equivalence,
\item[ii)] the algebra $(\mathbf A,\exists)$ is a monadic Pavelka's algebra in original sense (see \cite{Geo}),
\item[iii)] the inequality $\forall x\cdot \forall y \leq \forall (x\cdot y)$ holds for every $x,y\in A$.
\end{itemize}
\end{theorem}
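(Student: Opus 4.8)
The plan is to derive the whole statement from Theorems~\ref{monadic} and~\ref{tenseboolean}, using Theorem~\ref{vlastn} to pin down the shape of the natural time frame. First I would invoke Theorem~\ref{monadic}: since $(\mathbf A,\exists)$ is a monadic Pavelka's algebra, the triple $(\mathbf A,\forall,\forall)$ with $\forall=\neg\exists\neg$ is a tense Pavelka's algebra and $\forall\leftrightarrows\exists$. Because $\exists$ is a closure operator, the double negation law gives at once that $\forall=\neg\exists\neg$ is an interior operator with $\forall x\le x$ and $\forall\forall=\forall$. Hence Theorem~\ref{vlastn} applies to $(\mathbf A,\forall,\forall)$ and shows that its natural time frame $(\s\mathbf A,R)$ is reflexive, symmetric and transitive, i.e.\ $R$ is a fuzzy equivalence; thus the natural embedding always realises $(\mathbf A,\exists)$ inside $([0,1]^{\s\mathbf A},g_R)=([0,1]^{\s\mathbf A},\neg f_R\neg)$ with $R$ a fuzzy equivalence, which is exactly Theorem~\ref{monadicrepre}.

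Next I would establish (i)~$\Leftrightarrow$~(iii). Apply Theorem~\ref{tenseboolean} to $(\mathbf A,\forall,\forall)$: since $G=H=\forall$, its conditions i) and ii) both read $\forall x\cdot\forall y\le\forall(x\cdot y)$, i.e.\ our (iii), and Theorem~\ref{tenseboolean} asserts that this is equivalent to $(\mathbf A,\forall,\forall)$ being induced by a boolean time frame, which (through Theorem~\ref{pt1}, as in the proof of Theorem~\ref{boolean}) is the natural one. By the first paragraph the natural frame is always an equivalence, so here ``boolean time frame'' means ``boolean equivalence'', and passing through $\neg(\cdot)\neg$ together with Theorem~\ref{konjug} turns ``$(\mathbf A,\forall,\forall)$ induced by a boolean equivalence in the tense sense'' into ``$(\mathbf A,\exists)$ induced by a boolean equivalence'', which is (i). For the converse direction of this equivalence one may also argue directly: on any $([0,1]^I,g_R)$ with $R$ a boolean equivalence the universal operator is $f_R$ by Theorem~\ref{konjug}, and $f_R(x)\cdot f_R(y)\le f_R(x\cdot y)$ by Theorem~\ref{boolean}~i), an inequality which transports back along the embedding to give (iii) in $\mathbf A$.

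Finally I would prove (ii)~$\Leftrightarrow$~(iii). By Theorem~\ref{tenseboolean}, (iii) is equivalent to its condition iv): $(MV(\mathbf A),\forall,\forall)$ is a tense MV-algebra in the sense of (T1)--(T6). By the representation theorem for tense MV-algebras recalled in the Introduction, for $G=H=\forall$ this is in turn equivalent to $\exists=\neg\forall\neg$ being a monadic operator on $MV(\mathbf A)$ in the sense of ($\exists$1)--($\exists$6) (the associated relation being automatically an equivalence, by the same reasoning as above carried out at the MV level). It then remains only to recover the constants: being a closure operator, $\exists$ satisfies $\exists(1)=1$, whence $\exists(\mathbf r)=\exists(\mathbf r\cdot 1)=\mathbf r\cdot\exists(1)=\mathbf r$, which is precisely the additional axiom in Georgescu's definition of monadic Pavelka's algebras. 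Therefore (iii) holds if and only if $(\mathbf A,\exists)$ is a monadic Pavelka's algebra in the original sense, i.e.\ (ii).

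The step I expect to be the real obstacle is the bidirectional use of the monadic/tense-MV correspondence inside (ii)~$\Leftrightarrow$~(iii): one must make sure, quoting the external representation theorem accurately, that (T1)--(T6) for $G=H=\forall$ yields \emph{all} of ($\exists$1)--($\exists$6) for $\exists=\neg\forall\neg$ and conversely, and that this meshes with the boolean-equivalence picture coming out of Theorem~\ref{tenseboolean}. The identification of the ``boolean time frame'' of Theorem~\ref{tenseboolean} with a genuine boolean equivalence — that the natural frame is forced to be reflexive, symmetric and transitive — is the other point needing care, but it is handled cleanly by Theorem~\ref{vlastn}, and the constants cause no trouble since the relevant axiom is automatic.
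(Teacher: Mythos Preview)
Your treatment of (i)~$\Leftrightarrow$~(iii) is essentially the paper's argument, just packaged differently: the paper cites Theorems~\ref{boolean}, \ref{monadic} and~\ref{monadicrepre}, while you route through Theorem~\ref{tenseboolean} (which is built on Theorem~\ref{boolean}) together with Theorems~\ref{monadic}, \ref{vlastn} and~\ref{konjug}. The explicit use of Theorem~\ref{vlastn} to force the natural frame to be an equivalence is exactly what underlies the paper's appeal to Theorem~\ref{monadicrepre}.

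Where you diverge is in handling (ii). The paper does not argue (ii)~$\Leftrightarrow$~(iii) directly; it closes the cycle (i)~$\Rightarrow$~(ii)~$\Rightarrow$~(iii). For (i)~$\Rightarrow$~(ii) it checks that $g_R$ for a boolean equivalence satisfies ($\exists$1)--($\exists$6) (citing \cite{CPV}) and that $\exists\mathbf r=\mathbf r$; for (ii)~$\Rightarrow$~(iii) it simply quotes \cite{Geo} that original monadic Pavelka's algebras satisfy $\forall x\cdot\forall y\le\forall(x\cdot y)$. Your route instead goes through condition~iv) of Theorem~\ref{tenseboolean} and then invokes the equivalence, at the MV level, between ``$(MV(\mathbf A),\forall,\forall)$ satisfies (T1)--(T6)'' and ``$\exists=\neg\forall\neg$ satisfies ($\exists$1)--($\exists$6)''. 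That equivalence is only alluded to in the Introduction (via the remark following the \cite{BoPa} representation theorem) and is not proved in the paper, so you are trading the paper's external citations (\cite{CPV}, \cite{Geo}) for a different external dependency. This is legitimate, and your own diagnosis of it as the real obstacle is accurate, but the paper's cycle is arguably cleaner because each external input is a one-directional implication rather than a full biconditional correspondence.

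One point where your write-up is sharper than the paper's: your verification of $\exists\mathbf r=\mathbf r$ via $\exists(\mathbf r)=\exists(\mathbf r\cdot 1)=\mathbf r\cdot\exists(1)=\mathbf r$ uses only the defining axiom $\mathbf r\cdot\exists(x)=\exists(\mathbf r\cdot x)$ and holds in every monadic Pavelka's algebra in the new sense, whereas the paper computes it on the representation side assuming a boolean equivalence.
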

\begin{proof}
The equivalence i) $\Leftrightarrow$ iii) is a corollary of Theorems \ref{boolean}, \ref{monadic} and \ref{monadicrepre}. It is easy to show that $g_R$ induced by the relation equivalence satisfies ($\exists$1)-($\exists$6) (and it was proved in \cite{CPV}, consequently). Moreover, it satisfies
 $$\exists (\mathbf r)(j)=\bigwedge_{i\sim j} \mathbf{r}(i)= r$$ for any boolean equivalence $\sim$. Hence $\exists\mathbf r=\mathbf r$. Finally, any `original monadic Pavelka's' algebra satisfies iii) and $\mathbf r\cdot \exists (x)=\exists (\mathbf r\cdot x)$ (see \cite{Geo}). 
\end{proof}

\subsection{Formal concept operators on the standard  Pavelka's algebra} As it has been mentioned above, a formal concept analysis on the standard Pavelka's algebra is based on operators $d_I$ and $h_I$ induced by a fuzzy relation $I\colon M\times G\longrightarrow [0,1]$ where $(G,M,I)$ is called a fuzzy context. Using Theorem \ref{pt2}, a reversing Galois connection $d$, $h$  between semisimple Pavelka's algebras is induced by a fuzzy formal concept if, and only if, $d\rightleftharpoons h$ holds.

Formal concepts are pairs $(h_Id_I(x),d_I(x))$. Using an easy analogy of Lemma \ref{interclosezpet} $h_Id_I$ and $d_Ih_I$ are closure operators satisfying $\mathbf r\cdot h_Id_I(x)\leq h_Id_I(\mathbf r\cdot x)$ and $\mathbf r\cdot d_Ih_I(x)\leq d_Ih_I(\mathbf r\cdot x)$. Thus the mappings $d_I$ and $h_j$ are reversing bijections between sets of closed elements (by $h_Id_I$ and $d_Ih_I$). A formal concept analysis is an analysis of a lattice of formal concepts and thus it is just the analysis of closure operators $h_Id_I$. Its characterization is contained in the following theorem.

\begin{theorem}
Let us have a semisimple Pavelka's algebra $\mathbf A=(A;\oplus,\neg,\{\mathbf r\mid r\in [0,1]\cap \mathbb Q\})$ and a closure operator $O\colon A\longrightarrow A$. Then the following statements are equivalent:
\begin{itemize}
\item[i)] $\mathbf r\cdot O(x)\leq O(\mathbf r\cdot x)$.
\item[ii)] There exists a formal context $(\s \mathbf A,G,I)$ such that $(\mathbf A,O)$ is embeddable into $([0,1]^{\s\mathbf A},h_Id_I)$.
\end{itemize}
\end{theorem}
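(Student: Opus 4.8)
The plan is to handle the two implications separately: (ii) $\Rightarrow$ (i) is the routine direction, obtained by transporting along the embedding the inequality $\mathbf r\cdot h_Id_I(z)\le h_Id_I(\mathbf r\cdot z)$, which holds on $[0,1]^{\s\mathbf A}$ by the analogue of Lemma~\ref{interclosezpet} recorded in the previous subsection; while (i) $\Rightarrow$ (ii) will be proved by extending $O$ to a closure operator on the whole power $[0,1]^{\s\mathbf A}$ and invoking Theorem~\ref{clos}, which applies because $[0,1]$ is a complete residuated lattice. For (ii) $\Rightarrow$ (i), let $\iota\colon(\mathbf A,O)\hookrightarrow([0,1]^{\s\mathbf A},h_Id_I)$ be the given embedding; then $\iota$ is a Pavelka's algebra embedding, hence an order embedding that sends the constant $\mathbf r$ to the constant $\mathbf r^{\s\mathbf A}$ and commutes with $\cdot$, and $\iota\circ O=(h_Id_I)\circ\iota$. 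Using the displayed compatibility of $h_Id_I$ we then get
$$\iota(\mathbf r\cdot O(x))=\mathbf r^{\s\mathbf A}\cdot h_Id_I(\iota(x))\le h_Id_I(\mathbf r^{\s\mathbf A}\cdot\iota(x))=\iota(O(\mathbf r\cdot x)),$$
and since $\iota$ reflects $\le$, this yields $\mathbf r\cdot O(x)\le O(\mathbf r\cdot x)$.

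For (i) $\Rightarrow$ (ii) I would identify $\mathbf A$ with its image under the natural embedding $n_{\mathbf A}\colon\mathbf A\hookrightarrow[0,1]^{\s\mathbf A}$ and define $\bar O\colon[0,1]^{\s\mathbf A}\to[0,1]^{\s\mathbf A}$ by
$$\bar O(y)=\bigwedge\{\,n_{\mathbf A}(O(a))\mid a\in A,\ y\le n_{\mathbf A}(a)\,\}.$$
The routine points to check are: $\bar O$ is monotone and extensive; it is idempotent (for $\bar O\bar O(y)\le\bar O(y)$, given $b$ with $y\le n_{\mathbf A}(b)$ take $a:=O(b)$, so that $\bar O(y)\le n_{\mathbf A}(O(b))=n_{\mathbf A}(a)$ and $O(a)=O(b)$); and $\bar O\circ n_{\mathbf A}=n_{\mathbf A}\circ O$, because for $c\in A$ the meet defining $\bar O(n_{\mathbf A}(c))$ ranges over $a\ge c$ and is attained at $a=c$. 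Thus $\bar O$ is a closure operator on $[0,1]^{\s\mathbf A}$ extending $O$.

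The substantive step, which I expect to be the main obstacle, is verifying the compatibility condition $d^{\s\mathbf A}\cdot\bar O(y)\le\bar O(d^{\s\mathbf A}\cdot y)$ \emph{for every} $d\in[0,1]$, whereas the hypothesis on $O$ only supplies it for rational constants. I would close this gap using density of $[0,1]\cap\mathbb Q$ in $[0,1]$ together with the fact that the MV-product preserves infima and suprema: writing $d^{\s\mathbf A}=\bigvee\{\mathbf r^{\s\mathbf A}\mid \mathbf r\le d\}$ it suffices to prove $\mathbf r^{\s\mathbf A}\cdot\bar O(y)\le\bar O(d^{\s\mathbf A}\cdot y)$ for each rational $\mathbf r\le d$. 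Now $\mathbf r^{\s\mathbf A}\cdot\bar O(y)=\bigwedge_{y\le n_{\mathbf A}(a)}n_{\mathbf A}(\mathbf r\cdot O(a))\le\bigwedge_{y\le n_{\mathbf A}(a)}n_{\mathbf A}(O(\mathbf r\cdot a))$ by hypothesis, and for any $b$ with $d^{\s\mathbf A}\cdot y\le n_{\mathbf A}(b)$ the element $a:=\mathbf r\to b\in A$ satisfies $y\le n_{\mathbf A}(a)$ (since $\mathbf r^{\s\mathbf A}\cdot y\le d^{\s\mathbf A}\cdot y\le n_{\mathbf A}(b)$, hence $y\le\mathbf r^{\s\mathbf A}\to n_{\mathbf A}(b)=n_{\mathbf A}(\mathbf r\to b)$) and $\mathbf r\cdot a\le b$, so $n_{\mathbf A}(O(\mathbf r\cdot a))\le n_{\mathbf A}(O(b))$; letting $b$ range over all such elements gives $\mathbf r^{\s\mathbf A}\cdot\bar O(y)\le\bar O(d^{\s\mathbf A}\cdot y)$.

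Finally, applying Theorem~\ref{clos} to the closure operator $\bar O$ on $[0,1]^{\s\mathbf A}$ produces a set $G$ and a fuzzy relation $I\colon\s\mathbf A\times G\to[0,1]$ with $\bar O=\epsilon_I\delta_I$, that is $\bar O=h_Id_I$ in the formal-concept notation. Then $(\s\mathbf A,G,I)$ is a fuzzy formal context, and since $n_{\mathbf A}$ is a Pavelka's algebra embedding with $n_{\mathbf A}(O(x))=\bar O(n_{\mathbf A}(x))=h_Id_I(n_{\mathbf A}(x))$ for all $x\in A$, it embeds $(\mathbf A,O)$ into $([0,1]^{\s\mathbf A},h_Id_I)$, establishing (ii).
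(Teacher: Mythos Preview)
Your proof is correct and takes a genuinely different route from the paper's. The paper argues (i) $\Rightarrow$ (ii) by directly mimicking the proof of Theorem~\ref{clos}: it sets $G=\{O(a)\mid a\in A\}$, defines $I(F,O(a))=O(a)/F$, and then shows by hand that the range of $h_I$ (restricted to $A$ via $n_{\mathbf A}$) coincides with the set of $O$-closed elements, using rational ``approximate elements'' to bridge the rational/real gap. You instead first extend $O$ to a closure operator $\bar O$ on the complete power $[0,1]^{\s\mathbf A}$, verify the full diagonal compatibility $d^{\s\mathbf A}\cdot\bar O(y)\le\bar O(d^{\s\mathbf A}\cdot y)$ for every $d\in[0,1]$ via density of the rationals, and then invoke Theorem~\ref{clos} as a black box. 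Your approach is more modular and cleanly separates the two difficulties (extension and representation); the paper's is more self-contained but essentially reproves Theorem~\ref{clos} in situ. Two minor remarks: in your display $\mathbf r^{\s\mathbf A}\cdot\bar O(y)=\bigwedge_{y\le n_{\mathbf A}(a)}n_{\mathbf A}(\mathbf r\cdot O(a))$ only the inequality $\le$ is needed (and is immediate from monotonicity of $\cdot$), so you need not appeal to continuity of the \L ukasiewicz product; and it is worth noting explicitly that the index set in the definition of $\bar O(y)$ is nonempty (it always contains $1\in A$), so that extensivity is well-posed.
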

\begin{proof}
An implication ii) $\Rightarrow$ i) is clear. The converse one we prove analogously to the Theorem \ref{clos}.

Let us denote a set $J=\{O(a)\mid a\in A\}$ and a fuzzy relation $I\colon \s\mathbf A\times J\longrightarrow [0,1]$ defined by $R(F,O(a))=O(a)/F$. Let $x\in [0,1]^J$. By an approximate element of $x$ we mean any rational-valued vector $\mathbf r\in ([0,1]\cap\mathbb Q)^J$ satisfying $\mathbf r\leq x$. Let us denote by $A(x)$ the set of all approximate elements of $x$. If $\mathbf r\in A(x)$ then we can compute
\begin{eqnarray*}
h_R(x)/F&=& \bigwedge_{O(a)\in J}(x(O(a))\rightarrow I(F,O(a)))\\
&=& \bigwedge_{O(a)\in J}(x(O(a)) \rightarrow O(a)/F)\\
&\leq& \big(\bigwedge_{O(a)\in J}(\mathbf r(O(a))\rightarrow O(a))\big)/F
\end{eqnarray*}
and, consequently, $$h_R(x)\leq \bigwedge_{O(a)\in J}(\mathbf r (O(a))\rightarrow O(a) ).$$
Therefore 
\begin{eqnarray*}
O(h_R(x)) &\leq &O\big(\bigwedge_{O(a)\in J}(\mathbf r(O(a))\rightarrow O(a))\big)\\
&\leq & \bigwedge_{O(a)\in J}O(\mathbf r(O(a))\rightarrow O(a))\\
&\leq & \bigwedge_{O(a)\in J}(\mathbf r(O(a))\rightarrow OO(a))\\
&=& \bigwedge_{O(a)\in J}(\mathbf r(O(a)) \rightarrow O(a)).
\end{eqnarray*}
Thus 
\begin{eqnarray*}
O(h_R(x)) &\leq& \bigwedge_{r\in A(x)}\bigwedge_{O(a)\in J}(\mathbf r(O(a)) \rightarrow O(a))\\
&=& \bigwedge_{O(a)\in J}\big(\big(\bigvee_{r\in A(x)}\mathbf r(O(a))\big) \rightarrow O(a)\big)\\
&=& \bigwedge_{O(a)\in J}(x(O(a)) \rightarrow O(a))\\
&=& h_R(x)
\end{eqnarray*}
And since $O$ is a closure operator, we obtain $h_R(x)=O(h_R(x))$.

On the other side, let us have any closed set $O(x)\in J$. We define $y\in A^J$ by
\begin{center}
$y(O(b))=\left\{
\begin{array}{ll}
1 &\mbox{ if } O(b)=O(x)\\
0 &\mbox{ if } O(b)\not =O(x).
\end{array}\right. $
\end{center}
and we have
$$h_R(y)/F= \bigwedge_{O(a)\in J}(y(O(a))\rightarrow O(a)/F) =O(x)/F$$ Thus, we have proved that interiors of the operator $O$ are all $h_R(y)$ such that $y\in A^J$. With respect to Corollary \ref{c1}, operators $O$ and $h_Rd_R$ have the same systems of interiors and thus $O =h_Rd_R$.
\end{proof}

\section{Resume}

In the paper we have characterized several important operators that can be derived from fuzzy binary relations and we have applied the obtained results to define the tense and monadic Pavelka's algebras in new way. We believe that the new definitions may be a good subject for a discussion how quantifiers in fuzzy logic should be introduced.

A new open problem arises: what is the correspondence between the properties of a the groupoids of fuzzy operators that are expressible as compositions of operators $\phi_R$, $\rho_R$, $\delta_R$ and $\epsilon_R$ induced by a given fuzzy relation $R$ and the properties of the relation $R$. Some examples were described in Theorem \ref{konjug}.

All the presented theory is fully extendible for more general classes of residuated lattices (for example for non-commutative bounded integral/non-integral ones). Further, the presented theory of non-associative logics cite{Bo,Bo1} can be generalized if the techniques of non-associative computing \cite{Bot} are skillfully combined with results of this paper.
 
 \section{Acknowledgements}
 
Author is very grateful to Milan Petr\' ik and Josef P\' ocs for comments.

\end{document}